\documentclass[letterpaper, twoside, 11pt]{amsart}
\usepackage[T2A]{fontenc}
\usepackage[utf8]{inputenc}	
\usepackage[main=english, russian]{babel}
\usepackage{geometry}
\usepackage{comment}

\usepackage{xcolor}
\definecolor{forestgreen}{HTML}{228B22}
\definecolor{royalblue}{HTML}{4169E1}

\usepackage[unicode, pdftex]{hyperref}
\hypersetup{
	colorlinks   = true, 
	urlcolor     = blue, 
	linkcolor    = royalblue, 
	citecolor    = forestgreen 
}

\usepackage[noabbrev,capitalise,nameinlink]{cleveref}

\usepackage[abbrev,alphabetic,backrefs,initials,nobysame]{amsrefs}

\usepackage{amssymb}
\usepackage{tikz-cd}
\usepackage[all]{xy}
\usepackage{mathrsfs}

\newcommand{\longdashleftrightarrow}[1][1.75em]{\mathrel{
		\tikz[baseline]{\draw[dash pattern=on .25em off .1 em,<->](0,.58ex)--(#1,.58ex)}}}

\newtheorem{theorem}{Theorem}[section]
\newtheorem{proposition}[theorem]{Proposition}
\newtheorem{lemma}[theorem]{Lemma}
\newtheorem{corollary}[theorem]{Corollary}
\newtheorem{conjecture}[theorem]{Conjecture}

\theoremstyle{definition}
\newtheorem{definition}[theorem]{Definition}
\newtheorem{construction}[theorem]{Construction}
\newtheorem{example}[theorem]{Example}

\theoremstyle{remark}
\newtheorem{remark}[theorem]{Remark}
\newtheorem{question}[theorem]{Question}

\theoremstyle{plain}
\newcounter{intro}

\newtheorem{intro-conjecture}[intro]{Conjecture}
\newtheorem{intro-corollary}[intro]{Corollary}
\newtheorem{intro-theorem}[intro]{Theorem}
\newtheorem*{intro-corollary*}{Corollary}

\newcommand{\Q}{\mathbb{Q}}
\newcommand{\R}{\mathbb{R}}
\newcommand{\Ko}{\mathbb{C}}
\newcommand{\Zi}{\mathbb{Z}}

\newcommand{\Cc}{\mathfrak{C}}

\newcommand{\X}{\mathcal{X}}
\newcommand{\B}{\mathcal{B}}
\newcommand{\Z}{\mathcal{Z}}
\newcommand{\Y}{\mathcal{Y}}
\newcommand{\V}{\mathcal{V}}
\newcommand{\Qp}{\mathcal{Q}}

\newcommand{\Pu}{\mathcal{P}}
\newcommand{\Li}{\mathcal{L}}
\newcommand{\N}{\mathcal{N}}
\newcommand{\Ho}{\mathcal{H}}

\newcommand{\F}{\mathcal{F}}
\newcommand{\Os}{\mathcal{O}}
\newcommand{\A}{\mathcal{A}}

\DeclareMathOperator{\Sp}{Spec}
\newcommand{\Pp}{\mathbb{P}}
\newcommand{\al}{\alpha}
\newcommand{\Aut}{\textnormal{Aut}}
\newcommand{\gr}{\textnormal{gr}\,}

\DeclareMathOperator{\supp}{supp}
\newcommand{\Hilb}{\mathbf{HIlb}_{\Pp^N}^{\mathbf{P}(t)}}
\newcommand{\Pic}{\textbf{Pic}_Z}
\newcommand{\LS}{\textnormal{LS}_Z}
\newcommand{\VA}{\textnormal{VA}}
\newcommand{\PGL}{\mathbf{PGL}_{N+1}}
\newcommand{\D}{\mathbb{D}}

\newcommand{\CY}{\textnormal{CY}}
\newcommand{\an}{\textnormal{an}}

\newcommand{\Iso}{\textnormal{Iso}}

\newcommand{\Amp}{\textnormal{Amp}}
\newcommand{\Nef}{\textnormal{Nef}}
\newcommand{\Mov}{\textnormal{Mov}}

\let\emptyset\varnothing

\title{Constructibility aspects of the cone conjecture}
\author[D. V. Serebrennikov]{Daniil Serebrennikov}
\address{Department of Mathematics, Johns Hopkins University}
\email{dserebr1@jhu.edu}
\date{}

\begin{document}
	\begin{abstract}
		We establish two consequences of the Kawamata--Morrison--Totaro cone conjecture, and prove them unconditionally in all dimensions. First, for a K-trivial  variety, the natural action of its automorphism group on the set of ample divisor classes of fixed volume has only finitely many orbits. Second, the number of (isomorphism classes of) minimal models for a given K-trivial variety is finite if these models admit a bounded polarization.
	\end{abstract}
	\maketitle
	\tableofcontents
	\section{Introduction}
	In this paper, we study families of projective $0$-pairs with klt singularities over the field of complex numbers $\Ko$. For the sake of clarity, we state our results for \text{K-trivial} varieties. We say that a smooth projective variety $X$ is \textit{K-trivial} if the canonical line bundle $\omega_X$ is torsion. The class of K-trivial varieties includes classical examples such as the Fermat quartic surface in $\Pp^3_\Ko$, i.e. a K3 surface, a general quintic hypersurface in $\Pp^4_\Ko$, an abelian variety, etc.
	The main conjecture concerning the birational geometry of K-trivial varieties is the Kawamata--Morrison--Totaro cone conjecture. The latter conjecture is an unraveled problem in dimension~$\ge 3$. We provide an informal diagram that summarizes the conjecture and its consequences (see \cref{Appendix Cone_Conjecture}).\vfil
		\begin{center}
		\begin{tikzpicture}
			\def\sep{2pt} 
			\def\vhalf{3pt} 
			\draw[black, very thick] (-1,0) rectangle (2.24,1.62);
			\node[align=center] at (0.62,0.81) {Arithmetic\\ cone conjecture};
			
			\draw[black, very thick] (5.48,0) rectangle (8.72,1.62);
			\node[align=center] at (7.1,0.81) {Geometric\\ cone conjecture};
			\draw[
			black, thick,
			double, double distance=5pt,
			line cap=butt, line join=miter,
			-{Implies[length=8pt,width=10pt]}
			] (3.05,0.81) -- (4.67,0.81);
			
			\draw[black, very thick] (-1,-3.24) rectangle (2.24,-1.62);
			\node[align=center] at (0.62,-2.43) {Finiteness of\\ ample orbits};
			\draw[
			black, thick,
			double, double distance=5pt,
			line cap=butt, line join=miter,
			-{Implies[length=8pt,width=10pt]}
			] (0.62,-0.22) -- (0.62,-1.4);
			
			\draw[black, very thick] (5.48,-3.24) rectangle (8.72,-1.62);
			\node[align=center] at (7.1,-2.43) {Finiteness of\\ minimal models};
			\draw[
			black, thick,
			double, double distance=5pt,
			line cap=butt, line join=miter,
			-{Implies[length=8pt,width=10pt]}
			] (7.1,-0.22) -- (7.1,-1.4);
		\end{tikzpicture}
	\end{center}
	
	The arithmetic cone conjecture predicts (cf. \cref{prop: arithmetic}) the finiteness of ample orbits, which is our first result. To be precise, let $\text{NS}(X) = \mathrm{Pic}(X)\mathbin{/}\!\equiv$ denote the group of line bundles on $X$ modulo the numerical equivalence. Then there is a natural (right) action of the automorphism group $\text{Aut}(X)$ of $X$ on $\text{NS}(X)$ given by $[A]\cdot \varphi = [\varphi^* A]$ for every $\varphi \in \text{Aut}(X)$, and every $[A]\in \text{NS}(X)$.
	\begin{intro-theorem}[Finiteness of ample orbits]
		\label{intro-theorem: orbits}
		Fix $N\in \mathbb{Z}_{>0}$. Let $X$ be  a projective smooth K-trivial variety over $\Ko$. Then the number of orbits for the natural action of $\textnormal{Aut}(X)$ on the set $$\{[A]\in \textnormal{NS}(X): A^{\dim X} = N, \ A \text{ is ample}\}$$ is finite (whenever the defined set is non-empty).
	\end{intro-theorem}
	Besides particular cases for which the cone conjecture is proven, \cref{intro-theorem: orbits} is known for abelian varieties \cite[Theorem 1.1]{NN81}, smooth K3 surfaces \cite[Proposition 2.6]{Ste85}, smooth Calabi--Yau threefolds \cite[Corollary 4.5]{Sze99}, i.e. $\omega_X \approx \Os_X$ and $H^1(X, \Os_X) = H^2(X, \Os_X) = 0$, hyperk\"ahler manifolds \cite[Theorem 0.1]{Huy18}, and klt Calabi--Yau log surfaces \cite[Theorem 4.1]{Tot10} for which the cone conjecture is proven in full generality. In addition, it is recently shown in \cite[Lemma 2.22]{BFMT25} that a pure Hodge structure admits only finitely many polarizations up to isomorphism that is a generalization of \cite[Theorem 1.1]{NN81} in a slightly different direction.
	
	The geometric cone conjecture predicts (cf. \cref{prop: geometric_implies_finiteness}) that the number of smooth projective K-trivial varieties $X_i$ in a fixed birational class is finite up to isomorphism. We confirm this prediction under the assumption that these models $X_i$ \textit{admit a bounded polarization}, i.e. there exists an integer $N\in \Zi_{>0}$ such that for each variety $X_i$ there is a very ample line bundle $A_i$ on $X_i$ such that $A_i^{\dim X_i} \le N$.
	\begin{intro-theorem}[Finiteness of minimal models]
		\label{intro-theorem: finiteness}
		Let $\Cc$ be a class of birationally equivalent projective smooth K-trivial varieties over $\Ko$. Suppose that the class $\Cc$ admits a bounded polarization. Then the set of isomorphism classes in $\Cc$ is finite.
	\end{intro-theorem}
	
	It is worth mentioning that \cref{intro-theorem: finiteness} is already known for smooth Calabi--Yau threefolds \cite[Corollary 4.4]{Sze99}. However, our approach is rather a generalization of the previous work \cite[Theorem 1.4]{Ser25}, where the case of klt Calabi--Yau log surfaces was treated. Let us recall that a birational map $X \longdashleftrightarrow Y$ is called \textit{small} if the locus of indeterminacy has codimension at least two. By $S(\Ko)$ we denote the set of all closed points in $S$ with the induced Zariski topology. The main technical tool to prove \cref{intro-theorem: orbits} and \cref{intro-theorem: finiteness} is the following result.
	
	\begin{intro-theorem}[Constructibility]
		\label{intro-theorem: const}
		Let $X$ be a projective smooth K-trivial variety over $\Ko$, and $f: \X\to S$ a projective smooth morphism between varieties over $\Ko$. Then the subsets
		\begin{equation*}
			\begin{gathered}
				\Iso_X(\X/S) = \{s\in S(\Ko): \X_s \approx X\},\\
				\Iso_X^1(\X/S) = \{s\in S(\Ko): \X_s \overset{\textnormal{small}}{\longdashleftrightarrow} X\}
			\end{gathered}
		\end{equation*}
		 are (Zariski) constructible in $S(\Ko)$.
	\end{intro-theorem}
	
	In fact, we prove analogues of \cref{intro-theorem: orbits}, \cref{intro-theorem: finiteness}, \cref{intro-theorem: const} for projective klt 0-pairs $(X, B)$, that is, $K_X + B\equiv 0$, and $B$ is an effective $\Q$-divisor (cf. \cref{th: finiteness of orbits}, \cref{th: main}, \cref{th: Fano_0_General}).
	Although \cref{intro-theorem: const} may seem natural, we show that this type of constructibily does not hold in general (see \cref{sec: counterexamples}). Let us remark that constructibility at least of the set $\Iso_X(\X/S)$ appeared already in the literature: hyperk\"ahler manifolds \cite[Corollary 1.7]{Huy18}, some terminal 0-pairs of dimension 3 \cite[Theorem 4.12]{Xu25}. Along the way, we prove a criterion for the constructibility. It might be of independent interest and useful in other contexts.
	
	\begin{intro-theorem}[Criterion of constructibility]
		\label{intro-theorem: constructibility_criterion}
		Let $k$ be an algebraically closed field of an arbitrary characteristic. Let  $X$ be a projective integral scheme over $k$,  $f: \X\to S$ a projective morphism between Noetherian separated schemes over $k$. Then the following statements are equivalent:
		\begin{itemize}
			\item
			The subset $\Iso_X(\X/S) = \{s\in S(k): \X_s \approx X\}\subseteq S(k)$ is constructible.
			\item
			There exists an ample line bundle $\A$ on ${\X}/S$, an integer $r\in \mathbb{\Zi}_{>0}$, and ample line bundles $A_1, A_2, \dots, A_r$ on ${X}$ such that for every point $s\in \Iso_{{X}}({\X}/S)$ there exists an isomorphism $\varphi_s: {X} \to {\X}_s$ with the property: $\varphi_s^*\A_s \equiv A_{i_s}$ for some $i_s\in \{1, 2, \dots r\}$.
		\end{itemize} 
	\end{intro-theorem}
	In summary, let us state a direct corollary of \cref{intro-theorem: constructibility_criterion} that gives a new evidence for the Kawamata--Morrison--Totaro cone conjecture (see \cref{cor: cone_and_constructibility} for the details).
	
	\begin{intro-corollary*}
		Keep the notation and assumptions of \cref{intro-theorem: constructibility_criterion}. The subset $\Iso_X(\X/S)\subseteq S(k)$ is constructible if $\textnormal{Nef}^e(X)$ admits a rational polyhedral fundamental domain for $\textnormal{Aut}(X)$.
	\end{intro-corollary*}
	
	\addtocontents{toc}{\protect\setcounter{tocdepth}{1}}
	\subsection*{Acknowledgments}
	The author is thankful to his advisor, Professor Shokurov, for his support and encouragement as well for several helpful discussions. The author also wishes to thank Philip Engel for explaining the geometry of anticanonical pairs, and the thoughtful comments that helped correct some inaccuracies.
	
	\section{Preliminaries}
	\subsection{Notation and conventions.}
	Throughout, we work in the category of Noetherian schemes over $\mathbb{C}$, except \cref{sec: flag_schemes}, and follow the standard terminology \cite{IS05} unless stated otherwise. By a variety we mean an integral separated scheme of finite type over a field. On a variety, a prime divisor is understood in the Weil sense, i.e. an integral closed subscheme of codimension~$1$. In every part of the paper, for an $\mathbb{R}$-divisor $D$ the sum $D=\sum_i d_i D_i$ assumes that all $D_i$ are distinct prime divisors, and $d_i\in\mathbb{R}$.
	
	A \textit{pair} $(X,B)$ consists of a variety $X$ and an $\mathbb{R}$-divisor $B$ on $X$. We say that $(X,B)$ is a \textit{log pair} if $X$ is normal, and $K_X+B$ is an $\mathbb{R}$-Cartier divisor. Suppose $B=\sum_i b_i B_i$, and $\Gamma\subseteq\mathbb{R}$ is a set. If $b_i\in\Gamma$ for all~$i$ then we write $B\in\Gamma$. Denote by $[0,1]$ the unit segment of real numbers.  The $\mathbb{R}$-divisor $B$ is called a \textit{boundary} if $B\in [0,1]$.	The pairs $(X_1,B_1)$ and $(X_2,B_2)$ are said to be \textit{log isomorphic} if there exists an isomorphism $\varphi:X_1\to X_2$ such that $\varphi_*(B_1)=B_2$; we then write $(X_1,B_1)\approx (X_2,B_2)$. We use the notation $\cong$ instead of $\approx$ if an isomorphism is natural. We write $g: (Y, D) \to (X, B)$ if $g: Y\to X$ is a morphism such that $K_Y + D = g^*(K_X + B)$.
	
	\begin{definition}
		\label{def: sing}
		Let $(X,B)$ be a log pair with $B=\sum b_i B_i$. We say that $(X,B)$ has (at worst) the following singularities when the corresponding inequalities hold:
		\[
		\begin{gathered}
			\begin{cases}
				\text{terminal (trm)} \\
				\text{Kawamata log terminal (klt)}\\
				\text{log canonical (lc)}
			\end{cases}
			\Longleftrightarrow
			\text{dis}(X,B)
			\begin{cases}
				>0;\\
				>-1 \ \text{and}\  b_i<1 \ \forall i;\\
				\ge -1 \ \text{and}\  b_i\le 1 \ \forall i.
			\end{cases}
		\end{gathered}
		\]
		Here, the \textit{discrepancy} of $(X, B)$ is given by $$\text{dis}(X, B) = \underset{E}{\inf}\{a(E; X, B): E \text{ is an exceptional prime divisor over } X\},$$ that is, $E$ runs through all the prime exceptional divisors of all birational morphisms $g: Y\to X$, and the number $a(E; X, B)\in \mathbb{R}$ is defined by the formula $K_Y = f^*(K_X + B) + \sum_E a(E; X, B)\, E$.
		Similarly, $(X, B)$ is $\epsilon$\textit{-klt} (for some $\epsilon > 0$) if $\text{dis}(X, B) > -1 + \epsilon$ and $b_i < 1$ for all $i$. In addition, we say that a log pair $(Y, D)$ is a log resolution of $(X, B)$ if there is a log resolution $g: Y\to X$ such that $K_Y + D = g^*(K_X + B)$.
	\end{definition}
	
	\begin{definition}
		\label{def: 0-pair}
		A log pair $(X,B)$ is called a \textit{weakly log canonical model (wlc model)} if the following assumptions hold:
		\begin{itemize}
			\item $X$ is a proper variety, and $B$ is a boundary.
			\item $(X,B)$ has log canonical singularities.
			\item $K_X+B$ is nef.
		\end{itemize}
		If $(X,B)$ has (at worst) klt (resp. terminal) singularities, then $(X,B)$ is called a \textit{wlc klt model} (resp. a \textit{wlc trm model}). If, in addition, $K_X+B\equiv 0$, then $(X,B)$ is called a \textit{0-pair}\footnote{In the literature, 0-pairs are also called Calabi–Yau pairs.}.
	\end{definition}
	
	\begin{definition}
		\label{def: 0-class}
		We say that a log pair $(X_\alpha,B_\alpha)$ is \textit{crepant birationally equivalent} or \textit{0-equivalent} to $(X,B)$ if the varieties  $X_\alpha$ and $X$ are birationally equivalent, and for each common log resolution $(Y, D)$ of these pairs the following equalities hold:
		\[
		\begin{gathered}
			K_Y+D=g^*(K_X+B)=g_\alpha^*(K_{X_\alpha}+B_\alpha),\\
			B=g_*D,\quad B_\alpha=(g_\alpha)_*D,
		\end{gathered}
		\]
		where $g:Y\to X$ and $g_\alpha:Y\to X_\alpha$ are the corresponding log resolutions. The class of all log pairs crepant birationally equivalent to $(X,B)$ is called the \textit{0-class} of the log pair $(X,B)$.
	\end{definition}
	
	\begin{remark}
		If $(X,B)$ is a wlc klt model (resp. 0-pair), then each log pair $(X_\alpha,B_\alpha)$ crepant birationally equivalent to $(X, B)$ with effective $\mathbb{R}$-divisor $B_\alpha$ is also a wlc klt model (resp. 0-pair).
	\end{remark}
	
	\begin{remark}
		\label{prop: flops_0-equivalent}
		Let $(X, B), (X^\prime, B^\prime)$ be projective klt wlc models, $t: (X_\text{trm}, B_\text{trm}) \to (X, B)$ and $t^\prime: (X_\text{trm}^\prime, B_\text{trm}^\prime)\to (X^\prime, B^\prime)$ be their $\Q$-factorial terminalizations respectively. Suppose there is a birational map $\varphi: X \dashrightarrow X^\prime$ such that $\varphi_*B_\text{trm} = B^\prime_\text{trm}$. Then $(X_\text{trm}, B_\text{trm})$ and $(X^\prime_\text{trm}, B^\prime_\text{trm})$ are 0-equivalent according to \cite[Theorem 1]{Kaw08}. Hence $(X, B), (X^\prime, B^\prime)$ are 0-equivalent as well.
 	\end{remark}
	
	Let $f:\X\to S$ be a morphism of schemes and $s\in S$ a point. We denote the scheme-theoretic fiber by $\X_s=\X\times_S s$. If $\A$ is an invertible sheaf on $\X$, then its restriction to the fiber $\X_s$ is denoted by $\A_s=\A|_{\X_s}$. When no ambiguity arises, we denote a morphism $f: \X \to S$ simply by $\X/S$. If $\A$ is (very) ample over $S$, we  say that $\A$ is an (very) ample invertible sheaf on $\X/S$.
	
	\begin{definition}
		Suppose $f: \X \to S$ is a proper flat morphism between varieties, and the base $S$ is regular. We say that $\X/S$ is a \textit{family of varieties} if for every fiber $\X_s$ is a variety. If, in addition, the morphism $f$ is projective, then $\X/S$ is called a \textit{projective family of varieties}.
	\end{definition}
	
	\subsection{Bounded families}
	Consider a family of varieties $\X/S$ and an $\mathbb{R}$-divisor $\B=\sum b_i\B_i$ on $\X$. In general, the restriction of $\B$ to a fiber $\X_s$ is not defined, and even when it is, the definition is not straightforward. For our purposes it is convenient to work with \textit{elementary families} on which $\B_s$ is defined.
	
	\begin{definition}
		\label{def: elementary_family}
		Let $(\X,\B)$ be a pair and $f:\X\to S$ be a family of varieties. Suppose $\B=\sum_{i} b_i\B_i$ is a decomposition of the $\mathbb{R}$-divisor into distinct prime divisors. We say that $(\X/S,\B)$ is an \textit{elementary family} if for every point $s\in S$ the following hold:
		\begin{enumerate}
			\setcounter{enumi}{-1}
			\item The fiber $\X_s$ is a normal variety.
			\item The restriction $f|_{\B_i}:\B_i\to S$ is flat for all $i$.
			\item The closed subscheme $(\B_i)_s=\B_i\times_S s$ is a prime divisor on $\X_s$ for all $i$.
			\item The prime divisors $(\B_i)_s$ and $(\B_j)_s$ are distinct whenever $i\ne j$.
		\end{enumerate}
		In this situation, we put $\B_s=\sum_i b_i(\B_i)_s$. If, in addition, the morphism $f$ is projective, then $(\X/S,\B)$ is called a \textit{projective elementary family}.
	\end{definition}
	\begin{remark}
		\label{reamrk: restriction_canonical}
		For a given elementary family $(\X/S, \B)$ such that $(\X, \B)$ is a log pair we may assume that $(K_\X + \B)|_{\X_s} = K_{\X_s} + \B_s$ for general points $s\in S$. Indeed, by the generic smoothness on the source there is an open dense subset $\iota: U\subseteq \X$ such that $f|_{U}: U \to S$ is smooth. Moreover, $\text{codim}(\X\smallsetminus U)\ge 2$ because $\X$ is normal. Choose a general canonical divisor $K_U$. Then $K_{U_s}:= K|_{U_s}$ is a canonical divisor on $U_s$ for general points $s\in S$. Put $K_\X := \iota_* K_U$, and $K_{\X_s} := (\iota_s)_*K_{U_s}$.
	\end{remark}

	\begin{definition}
		A \textit{polarized pair} $(X, B; A)$ is a triple such that $(X, B)$ is a pair, and $A$ is an ample invertible sheaf on $X$. We say that two polarized pairs $(X, B; A)$ and $(X^\prime, B^\prime; A^\prime)$ are isomorphic if there exists an log isomorphism $\varphi: (X, B)\to (X^\prime, B^\prime)$ such that $A \approx \varphi^*A^\prime$, the latter is an isomorphism of invertible sheaves on $X$. In this situation, we write $(X, B; A) \approx (X^\prime, B^\prime; A^\prime)$.
	\end{definition}

	\begin{definition}
		\label{def: bounded_pairs}
		A class $\Cc$ of polarized pairs $(X_\alpha,B_\alpha; A_\al)$ is called \textit{bounded} if there exist finitely many elementary families $(\X^{(j)}/S^{(j)},\B^{(j)})$ together with an ample invertible sheaf $\A^{(j)}$ on $\X^{(j)}/S^{(j)}$ such that:
		\begin{itemize}
			\item For every $(X_\alpha,B_\alpha; A_\al)$ in $\Cc$ there is an index $j$ and a closed point $s_\alpha\in S^{(j)}$ such that $(X_\alpha,B_\alpha; A_\al)\approx~(\X^{(j)}_{s_\alpha},\B^{(j)}_{s_\alpha}; \A^{(j)}_{s_\alpha}).$
			\item For each index $j$ and every closed point $s\in S^{(j)}$ there exists a polarized pair $(X_\alpha,B_\alpha; A_\al)$ in $\Cc$ such that $(\X^{(j)}_s,\B^{(j)}_s, \A^{(j)}_{s})\approx~(X_\alpha,B_\alpha; A_\al)$.
		\end{itemize}
	\end{definition}

	\begin{definition}
		\label{def: bounded_polarization}
		A class $\Cc$ of projective pairs $(X_\alpha,B_\alpha)$ of fixed dimension $d\in \mathbb{Z}_{>0}$ admits \textit{bounded polarization} if there exists a positive integer $N\in \mathbb{Z}_{> 0}$ and a finite subset $\Gamma\subset\mathbb{R}$ such that $B_\alpha\in\Gamma$, and each $X_\alpha$ admits a very ample invertible sheaf $A_\al$ on $X_\al$ satisfying the following inequalities:
		\[
		(B_\alpha)_{\mathrm{red}}\cdot A_\al^{d-1}\le N,\qquad
		A_\al^{d}\le N,
		\]
		where $(B_\alpha)_{\mathrm{red}}=\sum_{V\in\supp(B_\alpha)} V$, $\ \supp(B_\alpha)=\{\,B_{i\alpha}\mid b_{i\alpha}\ne 0\,\}$, and $B_\alpha=\sum_i b_{i\alpha}B_{i\alpha}$. The class of triples$(X_\al, B_\al; A_\al)$ is called \textit{a class of polarized pairs associated with $\Cc$ (and $N$)}.
	\end{definition}
	
	\begin{remark}
		\label{remark: components}
		As the intersection $(B_\alpha)_{\mathrm{red}}\cdot A_\al^{\dim X_\alpha-1}$ is bounded, the set $\{\#\supp(B_\alpha)\}\subseteq \Zi_{\ge 0}$ is also bounded. We write $\# \square$ for the number of elements of a finite set $\square$.
	\end{remark}
	
	 For technical reasons, flag schemes behave better in families than pairs. In the following construction we assign to every pair $(X, B)$ a flag scheme $X = Z_0\supset Z_1 \supset \dots \supset Z_n$ (cf. \cref{sec: flag_schemes}).
	\begin{construction}
		\label{const: associated_flag}
		Let $(X, B)$ be a pair. Write $B=\sum_i b_i B_i$ as a sum of distinct prime components, and set $\Gamma=\{b_i: b_i\ne 0\}$ with $n=\#\Gamma$. For each index $j=1,\dots,n$ define the reduced divisor $\overline{B}_j=\sum_{i:\, b_i=a_j} B_i$, where $a_j\in\Gamma$ and $a_j>a_{j+1}$. The finite sequence $$(a_1\overline{B}_1, a_2 \overline{B}_2, \dots, a_r\overline{B}_n)$$ is called the \textit{marked divisor associated with the $\R$-divisor} $B$.
		
		Next, for each index $j\in \{1, 2, \dots, n\}$ define a closed subset $ Z_j^{(X, B)} = \bigcup_{i= j}^n \overline{B}_i$ of $Z_0^{(X, B)} = X$ with the structure of a reduced closed subscheme. The finite sequence $$\underline{Z}^{(X, B)} = (Z_0^{(X, B)}; Z_1^{(X, B)}, \dots, Z_n^{(X, B)})$$ is called the \textit{flag scheme associated with the pair} $(X, B)$. The number $l(\underline{Z}^{(X, B)}) = n \in \mathbb{Z}_{\ge 0}$ is called \textit{the length} of $\underline{Z}^{(X, B)}$.
	\end{construction}
	\begin{remark}
		\label{remark: iso_pair_and_flag}
		The assignment $(X, B) \mapsto \underline{Z}^{(X, B)}$ is canonical. Furthermore, given two pairs $(X, B)$ and $ (X^\prime, B^\prime)$ with an isomorphism $\varphi: X\to X^\prime$ we have that $\varphi$ is a log isomorphism of pairs $(X, B), \, (X^\prime, B^\prime)$ if and only if $\varphi$ is an isomorphism of the flag schemes $\underline{Z}^{(X, B)}, \, \underline{Z}^{(X^\prime, B^\prime)}$, that is, $l\left(\underline{Z}^{(X, B)}\right) = l\left(\underline{Z}^{(X^\prime, B^\prime)}\right)$ and $\varphi\left({Z}^{(X, B)}_i\right) = {Z}^{(X^\prime, B^\prime)}_i$ for each $i = 0, 1, \dots, l\left(\underline{Z}^{(X, B)}\right)$.
	\end{remark}

		Let us recall some geometric operations frequently used in our proofs. Suppose $f: \X\to S$ is a morphism between schemes. We say that a property $P$ holds for every $s\in S$ after \textit{shrinking the base} (resp. \textit{stratification of the base}) if there exists an open subscheme $U\hookrightarrow S$ (resp. finitely many locally closed subschemes $S_i\hookrightarrow S$) such that after the base change $U\hookrightarrow S$ (resp. $\bigsqcup_i S_i \hookrightarrow S$) the property $P$ holds for every point $s\in U$ (resp. $s\in \bigsqcup_i S_i$).
	\begin{proposition}
		\label{prop: bounded_subclass}
		Let $\Cc$ be a class of pairs of fixed dimension $d \in \Zi_{>0}$. Suppose $\Cc$ admits a bounded polarization. Then any class of polarized pairs associated with $\Cc$ is a subclass of a bounded class of polarized pairs.
	\end{proposition}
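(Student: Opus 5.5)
The approach is the standard boundedness argument through Hilbert and Chow schemes, made compatible with the elementary families of \cref{def: elementary_family} by stratifying the base. Fix $N$ and $\Gamma$ as in \cref{def: bounded_polarization}. For each $(X_\al,B_\al;A_\al)$ in the associated class, the very ample sheaf $A_\al$ has bounded degree $A_\al^d\le N$. By Matsusaka/Kollár-type degree bounds (a nondegenerate subvariety of $\Pp^M$ of degree $\le N$ spans at most $\Pp^{N+d-1}$), we embed $X_\al$ by a subsystem of $|A_\al|$ into some $\Pp^{M}$ with $M\le N+d-1$. It is cleaner to embed by the complete linear system. In that case $h^0(A_\al)$ must be bounded, which follows from the degree bound, since $X_\al$ is nondegenerate of degree $\le N$ in its complete embedding. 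The Hilbert polynomials of the $X_\al\subset\Pp^M$ then take finitely many values, because the coefficients of the Hilbert polynomial of a subvariety of bounded degree and dimension are bounded. Similarly, by \cref{remark: components} the number of components of $B_\al$ is bounded, and each component $B_{i\al}$ is a subvariety of $\Pp^M$ of dimension $d-1$ and degree $\le N$. Hence the components also lie in finitely many Hilbert schemes, and the coefficients $b_{i\al}$ lie in the finite set $\Gamma$.

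Consider the finite-type scheme $H$: a product of a Hilbert scheme $\mathrm{Hilb}^{P}_{\Pp^M}$ for $X$ with Hilbert schemes $\mathrm{Hilb}^{P_i}_{\Pp^M}$ for the components, one for each of finitely many choices of $M$, of $P$, of the number $k$ of components, of $(P_1,\dots,P_k)$, and of coefficient vectors in $\Gamma^k$. Let $H'\subseteq H$ be the locally closed incidence locus where each $B_i\subseteq X$. On it we have the universal family $\X\to H'$, the subschemes $\B_i\subseteq \X$, and $\Os(1)$. Every triple in the class appears as a fiber, with $A_\al\approx\Os(1)|_{X_\al}$ because we used the complete linear system.

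Next, stratify $H'$ into finitely many locally closed pieces, using Noetherian induction with generic flatness and the constructibility of "geometrically integral", "normal" and "regular" in flat projective families (EGA IV, 12.2.4 and 9.7.7). On each piece we require: the base is a regular variety, $\X$ is flat with normal integral fibers, each $\B_i$ is flat with fibers prime divisors, and the fibers of $\B_i$ and $\B_j$ are distinct for $i\ne j$, which is an open condition. We also require that the total space $\X$ be a variety; after passing to irreducible components and reduced structures this only costs further stratification. We keep only the strata containing a point coming from the class. This yields finitely many elementary families with ample $\A=\Os(1)$, and the class of all their fibers is bounded in the sense of \cref{def: bounded_pairs}. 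That class contains our class, and the second condition of \cref{def: bounded_pairs} holds tautologically because the bounded class is \emph{defined} to be the set of all fibers.

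The main obstacle is the stratification step: guaranteeing that each condition in \cref{def: elementary_family} holds on a locally closed subset, especially normality of fibers and irreducibility of $(\B_i)_s$. For irreducibility I would use the constructibility of the set of points with geometrically irreducible and reduced fibers. Since $k=\Ko$ is algebraically closed, on closed points this is the same as the fiber being integral.
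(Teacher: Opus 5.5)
Your proposal is correct and follows essentially the paper's route: a degree bound giving $h^0(A_\al)\le N+d$, embedding by the complete linear system, finitely many Hilbert polynomials, a Hilbert-scheme parameter space, and stratification of the base into finitely many elementary families, with the bounded class taken to be the set of all closed fibers. The only real difference is bookkeeping. You parametrize the components $B_{i\al}$ by a product of Hilbert schemes cut down by the (in fact closed) incidence condition $\B_i\subseteq\X$, so flatness of each $\B_i$ is immediate. The paper instead uses a flag Hilbert scheme of the nested unions $\bigcup_{i\ge j}B_i$ and recovers the components afterwards by decreasing induction.
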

	\begin{proof}
		 By the definition of a class admitting a bounded polarization there exists a finite set $\Gamma=\{0, a_1, a_2, \dots, a_n\}\subset \mathbb{R}$ such that for every $(X_\al, B_\al)$ in $\Cc$ we have $B_\alpha\in\Gamma$. Furthermore, the number of prime components of the divisors $B_\alpha$ is bounded by some $l\in\mathbb{Z}_{\ge 0}$ (see \cref{remark: components}). Let $B_\al = \sum_i b_{i\alpha}(B_\al)_i$ Then $\Cc$ splits into finitely many subclasses $\Cc_{l_1, l_2, \dots, l_n}$ corresponding to the pairs $(X_\alpha,B_\alpha)$ such that
		\[
		l_j=\#\{\,(B_{\alpha})_i\in \supp(B_\alpha):\ b_{i\alpha}=a_j\,\},\qquad 0\le\sum_{j=1}^n l_j\le l.
		\]
		Hence it suffices to prove the proposition for one fixed class $\Cc_{l_1, l_2, \dots,l_n}$. Whence we can assume $\Cc= \Cc_{l_1, l_2, \dots, l_n}, \ \sum^n_{j=1}l_j = l$, and $a_j > a_{j+1}$ for all $j$. Put $l_0 = 0$. Let $\Cc^p$ denote a class of polarized pairs associated with $\Cc$.
		
		Next, we introduce an ordered flag scheme $\underline{\widetilde{Z}}^{(X, B)}$ associated (non-canonically) to a pair $(X, B)$. Suppose $(a_1\overline{B}_1, a_2 \overline{B}_2, \dots, a_r \overline{B}_r)$ is the marked divisor associated with $B$. In addition, suppose the number of prime components of $\overline{B}_j$ equals to $l_j$. Set $m_j = \sum_{i=0}^j l_j$, and note $m_n = l$. Let $\overline{B}_j = \sum_{i= m_{j-1} + 1}^{m_j} {B}_i$ be a decomposition into prime components for every $j\in \{1, 2, \dots, n\}$. For each $j = 1, 2, \dots, l$ define a closed set $({{\widetilde{Z}}}^{(X, B)})_{j} = \bigcup_{i=j}^{l} B_i$ with the structure of a reduced closed subscheme of $({{\widetilde{Z}}}^{(X, B)})_0 = X$. In this notation, the $n$-tuple $\underline{\widetilde{Z}}^{(X, B)} = \left(({{\widetilde{Z}}}^{(X, B)})_0; ({{\widetilde{Z}}}^{(X, B)})_1, \dots, ({{\widetilde{Z}}}^{(X, B)})_l\right)$ shall be called an ordered flag scheme non-canonically attached to $(X, B)$. Nevertheless, $({\widetilde{Z}}^{(X, B)})_{m_{j-1}+1} = {Z}^{(X, B)}_{j}$ for every $j = 1, 2, \dots, n$, where $\underline{Z}^{(X, B)}$ is the flag scheme associated with $(X, B)$.
		
		By the construction above, for every $(X_\al, B_\al)$ in $\Cc$ we assign an ordered flag scheme $\underline{\widetilde{Z}_\al} = \underline{\widetilde{Z}}^{(X_\al, B_\al)}$. By the definition of a class admitting a bounded polarization, there exists a positive integer $N$ and very ample invertible sheaf $A_\alpha$ on $(\widetilde{Z_\al})_0 = X_\al$ such that $A_\al^{d}\le N$. From Matsusaka’s inequalities \cite[Chapter~VI, Exercise~2.15.8.5]{Kol96} it follows that
		\[
		\dim_\mathbb{C} H^0(X_\al,A_\al)\le A_\al^{d}+d\le N+d<+\infty.
		\]
		Hence for every polarized pair $(X_\al, B_\al; A_\al)$ there is a closed embedding $$(\widetilde{\Z}_\al)_l \subset (\widetilde{\Z}_\al)_{l-1} \subset \dots \subset (\widetilde{\Z}_\al)_{0} = X_\al \hookrightarrow \Pp^{\dim |A_\al|} \subseteq  \Pp^{N + d}_\Ko,$$ given by the complete linear system $|A_\al|$. By the standard Hilbert-Chow schemes argument (e.g. \cite[Lemma 2.20]{Bir19}), there exist finitely many $(l+1)$-tuples of polynomials $\mathbf{P}_j(t)\in \Q[t]^{\times l}$ such that for every flag scheme $\underline{\widetilde{Z}_\al}$ we have  $$\chi(\underline{\widetilde{Z}_\al}, A^{\otimes t}_\al):= \left(\chi\left(({{\widetilde{Z}_\al}})_0, A^{\otimes t}_\al\right), \chi\left(({{\widetilde{Z}_\al}})_1, A^{\otimes t}_\al\right), \dots, \chi\left(({{\widetilde{Z}_\al}})_l, A^{\otimes t}_\al\right)\right) = \mathbf{P}_{j_\al},$$ for some $j_\al$, here $\chi\left(({{\widetilde{Z}_\al}})_i, A^{\otimes t}_\al\right)$ denotes the Euler characteristic of $A_\al^{\otimes t}|_{({{\widetilde{Z}_\al}})_{i}}$ on $({{\widetilde{Z}_\al}})_{i}\subseteq X_\al$. Without loss of generality, we suppose that there is a single $(l+1)$-tuple polynomial $\mathbf{P}(t)$. By \cite[Theorem 4.5.1]{Ser06}, the flag Hilbert functor is represented by a projective scheme $\Hilb$, called the flag Hilbert scheme of $\Pp^N_\Ko$ relative to $\mathbf{P}(t)$, and by a universal family:
		$$
			\xymatrix{
			\widetilde{\Z}_l \subset \widetilde{\Z}_{l-1} \dots \subset \widetilde{\Z}_0  \subset  \Pp^N_\Ko \times \Hilb \ar@{->}[d]\\
			\Hilb.
			}
		$$
		
		Let $f_j: (\widetilde{\Z}_\al)_j \to \Hilb$ denote the restriction of the natural projection $\Pp^N_\Ko \times \Hilb \to \Hilb$. By universality, the morphism $f_j$ is flat and projective for each $j = 0, 1, \dots, l$.
		Let $S$ be the closure (with the structure of a reduced closed subscheme) of the closed points $s_\al$ corresponding to all $\underline{\widetilde{Z_\al}}$. To simplify the notation, after shrinking the base $S$ or stratification of $S$ we denote a new family of flag schemes by the same symbols.
		
		According to \cite{EGAIV} (see \cite[Appendix E]{GW20}), the set $$\{s\in S: (\widetilde{\Z}_0)_s \textnormal{ is geometrically integral, and geometrically normal over $\kappa(s)$}\}$$ is open, the sets $$\{s\in S: \{\dim V: V \text{ irreducible component of $(\widetilde{Z}_j)_s$}\} = \{d-1\}\}$$ are constructible for all $j=1, \dots l$. Consequently, we may assume the above properties hold for every point $s\in S$ after stratification of the base. Since the singular locus of $S$ is a proper closed subset, we can assume that $S$ is regular. Moreover, we can assume that every connected component of $S$ is irreducible. It is not hard to prove, by decreasing induction on $j$, that the for each $j=1, \dots, l$ the scheme $\widetilde{\Z}_j$ has exactly $l-j+1$ irreducible components $\B_i$. Hence, we can write $$\widetilde{\Z}_{m_{j-1}+1} =\bigcup_{i = m_{j-1}+1}^l \B_i = \left(\bigcup_{i={m_{j-1}} + 1}^{m_j}\B_i\right)\cup \left(\bigcup_{i={m_j} + 1}^{m_{j+1}}\B_i\right)\cup \dots \cup \left(\bigcup_{i=m_{n-1}+1}^{m_n}\B_i\right).$$ Finally, let us define an $\R$-divisor $\B = \sum_{j=1}^na_j\sum_{i=m_{j-1}+1}^{m_j}\B_i$ on $\X = (\widetilde{\Z})_0$. Then for every connected component $S^\prime$ of $S$ the restriction $f_0|_{S^\prime}: (\X_{S^\prime}, \B_{S^\prime}) \to S^\prime$ is a projective elementary family by construction. This concludes the proof.
		
	\end{proof}
	
	\begin{remark}
		In general, a subclass of a bounded class of projective varieties need not be bounded in the sense of \cref{def: bounded_pairs}. For example, any countable set of abelian varieties admitting a bounded polarization whose period matrices have only algebraic entries is not bounded.
	\end{remark}

	Let $(\X/S, \B)$ be an elementary family. For a given real number $\epsilon > 0$ define a set $$\textnormal{klt}_\epsilon(\X/S, \B) = \{s\in S: (\X_s, \B_s) \text{ is an $\epsilon$-klt log pair}\}.$$ In addition, if $(\X, \B)$ is a log pair then we can define the following set (cf. \cref{reamrk: restriction_canonical}) $$\nu_0(\X/S, \B) = \{s\in S: (K_\X + \B)|_{\X_s}\equiv 0\}.$$
	\begin{lemma}
		\label{lemma: klt_0_families}
		Let $(\X, \B)$ be a projective elementary family, and $\B$ is effective. Then for every real number $0< \epsilon< 1$ the following hold:
		\begin{enumerate}
			\item 
				If the set ${\textnormal{klt}_\epsilon(\X/S, \B)}$ is dense in $S$ then there exists an open dense subset $U\subseteq S$ such that $$(\X_U, \B_U) = (\X, \B)\times_S U\text{ is a klt log pair}.$$
			\item 
				The subset $\textnormal{klt}_\epsilon(\X/S, \B)\subseteq S$ is constructible.
				
			\item 
				If $(\X, \B)$ is a klt log pair then the subset $\nu_0(\X/S, \B)\subseteq S$ is open.
		\end{enumerate}
	\end{lemma}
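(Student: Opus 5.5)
The plan is to work throughout with a simultaneous log resolution of the generic fibre, spread out over a dense open subset of the base, and to reduce all three statements to discrepancy computations that are constant in families. Since $S$ is regular and the problem is local on $S$, I will use Noetherian induction and stratification freely.

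For (1), I first shrink $S$ so that $K_\X+\B$ is $\R$-Cartier and $(K_\X+\B)|_{\X_s}=K_{\X_s}+\B_s$ for general $s$, as in \cref{reamrk: restriction_canonical}. Here I expect an issue: $\R$-Cartierness of $K_\X+\B$ is not automatic. I would handle it by noting that the generic fibre $(\X_\eta,\B_\eta)$ is klt. This follows because the $\epsilon$-klt points are dense, and by the constancy of discrepancies computed on a fibrewise log resolution, the generic fibre inherits klt-ness. In particular the generic fibre is $\Q$-factorial-type enough, in the sense that $K_{\X_\eta}+\B_\eta$ is $\R$-Cartier, since log pair is part of the $\epsilon$-klt definition. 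Spreading out then makes $K_\X+\B$ $\R$-Cartier over an open set. Next I take a log resolution $g:\Y\to\X$ of $(\X,\B)$ and shrink $S$ so that $\Y\to S$ is smooth, the exceptional divisors and the strict transforms of the $\B_i$ form a relative snc divisor over $S$, and each fibre $g_s$ is a log resolution of $(\X_s,\B_s)$. Then the coefficients of $K_\Y+\B_\Y=g^*(K_\X+\B)$ restrict to the fibre coefficients. Klt-ness of a dense set of fibres forces all coefficients to be $<1$, and then $(\X_U,\B_U)$ is klt by the snc criterion.

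For (2), I use the same setup with Noetherian induction. On the open set $U$ obtained above, either every fibre is $\epsilon$-klt, which happens exactly when all coefficients of $\B_\Y$ are $<1-\epsilon$, or no fibre is. This is because the discrepancies on a log resolution with snc boundary computing $\epsilon$-klt-ness are the fibrewise ones: for a pair with snc support and coefficients $<1$, the minimal log discrepancy is read off from the coefficients together with their pairwise sums, and this is uniform over $U$. If $\textnormal{klt}_\epsilon$ is not dense, it lies in a proper closed subset. I then restrict the family to its irreducible components, after normalising or stratifying to regain regular bases, and recurse. The elementary-family conditions are preserved under base change to locally closed strata. Hence the set is a finite union of locally closed subsets.

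For (3), let $s\in\nu_0$. I want to show that $K_\X+\B$ is numerically trivial on nearby fibres. Write $L=K_\X+\B$. The condition $L|_{\X_s}\equiv0$ is equivalent to $L_s\cdot C=0$ for all curves $C$, or equivalently to $\pm L_s$ both being nef. Since $(\X,\B)$ is klt, $\X$ has rational singularities. I would then use the klt structure to show that $L$ is numerically trivial over a neighbourhood. The first approach I would try passes to $\Q$-coefficients by writing $L$ as a combination of $\Q$-Cartier klt adjoint divisors, and then applies invariance of plurigenera or Kawamata's result that numerically trivial implies torsion for klt $0$-pairs. This gives $mL_s\sim 0$, hence $h^0(\X_s,mL_s)=1$. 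Upper semicontinuity combined with deformation invariance, via Hacon–McKernan–Xu extension for klt pairs, then yields $h^0(mL_t)\ge1$ near $s$. Since $-L$ is nef on $\X_s$ as well, openness of relatively trivial loci follows from a relative Kawamata abundance argument for $L$ over a neighbourhood.

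I expect (3) to be the main obstacle, in particular the openness of numerical triviality for $\R$-divisors. The step I would try first is reducing to $\Q$-divisors by Shokurov polytope decomposition, and then using the local constancy of $\mathrm{NS}$-classes via the rationality of singularities: $R^1f_*\Os_\X$ and the Picard scheme give a locally constant lattice in which triviality is a closed-and-open condition.
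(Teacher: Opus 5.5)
The gap is in (1), at exactly the point you flag. Your claim that $K_{\X_\eta}+\B_\eta$ is $\R$-Cartier is circular. Discrepancies on a fibrewise log resolution $g\colon\Y\to\X$ are computed from $g^*(K_\X+\B)$, which already requires $K_\X+\B$ to be $\R$-Cartier. Saying that ``log pair'' is part of the $\epsilon$-klt condition only gives $\R$-Cartierness of $K_{\X_s}+\B_s$ for $s$ in a dense set of closed points, and this does not pass to the generic fibre formally. For example, let $X\subset\Pp^3$ be the cone over a plane cubic $E$ with a flex $p$, let $S=E$, $\X=X\times S$, and let $\B$ be the cone over the diagonal, so that $\B_s$ is the ruling over $s$. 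Then $K_X+\tfrac12\B_s$ is $\Q$-Cartier if and only if $s-p$ is torsion in $\mathrm{Pic}^0(E)$, which is a dense but non-generic set. These fibres are not klt, and that is the point: the klt hypothesis has to be used, via the MMP. This is precisely the content of \cite[Proposition 2.4]{HX15}, which the paper cites rather than reproves. One way to see it, in outline:
(i) Let $E_j$ be the $g$-exceptional divisors, fix $0<\delta<\epsilon$, and run a $(K_\Y+g^{-1}_*\B+(1-\delta)\sum_j E_j)$-MMP over $\X$ \cite{BCHM10}.
(ii) The $\epsilon$-klt set meets every dense open subset of $S$. On fibres over it, the negativity lemma forces every $E_j$ to be contracted, so the output is small over $\X$.
(iii) The relative basepoint-free theorem gives the ample model of $K+\B$ over $\X$. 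This is an isomorphism over those good fibres, hence over a dense open subset of $S$, where $K_\X+\B$ is therefore $\R$-Cartier.
Your (2) is the same argument as the paper's and is deduced from (1), so it inherits this gap.

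Part (3) is also not an argument as written. Upper semicontinuity gives $h^0(mL_t)\le h^0(mL_s)=1$ near $s$, which is the wrong inequality, and the deformation-invariance input is neither set up nor needed. Moreover, $\mathrm{NS}(\X_s)$ is not a locally constant lattice: Picard numbers jump on Noether--Lefschetz loci, e.g.\ for quartic K3 surfaces, and the singularities are irrelevant here. Your last idea does work once made precise. Fix $t\in\nu_0(\X/S,\B)$. The set of $\Delta\in\bigoplus_i\R\B_i$ with $K_\X+\Delta$ $\R$-Cartier and $(K_\X+\Delta)|_{\X_t}\equiv0$ is a rational affine subspace containing $\B$. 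So $\B=\sum_i r_i\Delta_i$ with $r_i>0$, $\sum_i r_i=1$, and each $\Delta_i$ a rational point of it. Each line bundle $\Os_\X(m(K_\X+\Delta_i))$ is numerically trivial on $\X_t$. By openness of $\mathbf{Pic}^\tau$ in the relative Picard scheme of a flat projective family with integral fibres (Kleiman's chapter in \cite{FGA05}), it is numerically trivial on all fibres over a neighbourhood of $t$. The intersection of these neighbourhoods lies in $\nu_0(\X/S,\B)$. The essential point, which your sketch leaves implicit, is that the $\Q$-pieces must be chosen individually numerically trivial on $\X_t$. The paper achieves this differently: Birkar's polytope decomposition into nef pieces on one fibre, Ambro's theorem to get $\Q$-linear triviality, and semicontinuity of $h^0$ to spread it to the generic fibre.
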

	\begin{proof}
		The statement (1) is the content of \cite[Proposition 2.4]{HX15}. Although, the proposition in \textit{loc. cit.} is stated for $\Q$-divisor $\B$, their proof is valid for $\R$-divisor $\B$. Now, we show that (1) implies (2). Indeed, we can assume that $S = \overline{\textnormal{klt}_\epsilon(\X/S, \B)}$. By Noetherian induction, it suffices to show that $\textnormal{klt}_\epsilon(\X/S, \B)$ contains an open dense subset of $S$. By (1), we can assume that $(\X, \B)$ is a klt log pair. Let $g: (\Y, \B_\Y) \to (\X, \B)$ be a log resolution, $K_\Y + \B_\Y = g^*(K_\X + \B)$. Shrinking the base, we may suppose that $(\Y/S, \B_\Y)$ is a projective elementary family, and $(\Y, \B_\Y)$ is log smooth over $S$. Then $g_s: (\Y_s, (\B_\Y)_s) \to (\X_s, \B_s)$ is a log resolution. By construction, $g^*(K_\X + \B)|_{\Y_s}=(K_\Y + \B_\Y)|_{\Y_s} = (K_{\Y_s} + \B_s) = g_s^*(K_{\X_s} + \B_s)$. This implies that $\text{dis}(\X_s, \B_s) = \text{dis}(\X, \B)$ for all $s\in S$, i.e. $(\X_s, \B_s)$ is a klt log pair for all $s\in S$.
		
		Next, we prove (3). By (2), we may assume that $\textnormal{klt}_\epsilon(\X/S, \B) = S$ (for some $\epsilon>0$). To be definite, we suppose $\nu_0(\X/S, \B) \neq \emptyset$. According to \cite[Proposition 1.4.14]{Laz04} the sets $U_+ = \{s\in S: (K_\X + \B)|_{\X_s} \text{ is nef}\}$ and $U_-\{s\in S: -(K_\X + \B)|_{\X_s} \text{ is nef}\}$ are (at most) countable intersection of open subsets of $S$. In particular, $\nu_0(\X/S, \B) = U_+\cap U_-$ is dense in $S$. Let $t\in \nu_0(\X/S, \B)$ be a closed point. By Shokurov's polytope argument \cite[Proposition 3.2(3)]{Bir11} there exist finitely many effective $\Q$-divisors $\Delta^0_i$ on $\X_{t}$ together with $r_i\in [0, 1]$ such that $K_{\X_{t}}+ \B_{t} = \sum_i r_i(K_{\X_t} + \Delta^0_i)$, $K_{\X_t} + \Delta^0_i$ is nef, and $\sum_i r_i = 1$. Let us show that $K_{\X_t} + \Delta^0_i\equiv 0$. If there is a curve $C\subset \X_s$ such that $(K_{\X_t} + \Delta^0_i)\cdot C > 0$ then $K_{\X_t} + \Delta^0_i = \sum_i r_i(K_{\X_t} + \Delta^0_i) \cdot C > 0$ that contradicts to the condition $K_{\X_t}+ \B_{t} \equiv 0$. Hence $K_{\X_t} + \Delta^0_i\equiv 0$. Due to \cite[Theorem 4.2]{Amb05}, we obtain $K_{\X_t} + \Delta^0_i \sim_\Q 0$ for all $i$. Since the family $(\X/S, \B)$ is elementary, there are effective divisors $\Delta_i$ on $\X$ such that $\B = \sum_i r_i\Delta_i$, and $(\Delta_i)_t = \Delta_i^0$. Then the set $S_i = \{s\in S: h^0\left(\X_s, \Os_{\X_s} \left( m\left(K_\X + \Delta_i\right)|_{\X_s}  \right)\right) = 1\} \supseteq \nu_0(\X/S, \B)$ is dense in $S$ (for some $m\gg 1$) for all $i$. In addition, $S_i$ is constructible due to \cite[Chapter III, Proposition 9.3]{Har77}. Consequently, each $S_i$ contains the generic point $\eta\in S$. This implies that $K_{\X_\eta} + (\Delta_i)_\eta \sim_\Q 0$. Then $K_\X + \Delta_i \sim_\Q E_i$ for some vertical (over $S$) divisor $E_i$ on $\X$. Thus, each subset $\nu_0(\X/S, \Delta_i)\subseteq S$ is open that implies the subset $\nu_0(\X/S, \B)\subseteq S$ is open as well.
	\end{proof}

	\section{Period maps}
	\subsection{Birational Hodge Theory}
	We start off this subsection by proving a particular case of \cref{intro-theorem: const} to illustrate main ideas (cf. \cref{th: particular_case}). And rest of the section is merely a technical preparation for the generalization of these ideas.
	\begin{definition}
		Let $S$ be a locally Noetherian topological space. A subset $S^\prime\subseteq S$ is called \textit{constructible} if there exist finitely many open subsets $U_i\subseteq S$ and closed subsets $Z_i\subseteq S$ such that $S^\prime=\bigcup_i (U_i\cap Z_i)$.
	\end{definition}
	Let $S$ be a scheme. By $S^\an$ (resp. $S(\Ko)$) denote the underlying set of closed points endowed with analytic topology (resp. induced Zariski topology). Suppose $\X/S$ is a family of varieties, $X$ is a variety. Define the following sets:
	\begin{equation*}
		\Iso_X(\X/S) = \{s\in S(\Ko): \X_s \approx X\} , \ \Iso_X^1(\X/S) = \{s\in S(\Ko): \X_s \overset{\textnormal{small}}{\longdashleftrightarrow} X\}.
	\end{equation*}
	It is clear that $\Iso_X(\X/S) \subseteq \Iso_X^1(\X/S)\subseteq S(\Ko)$.
	We raise the following question:
	\begin{question}
		Let $\X/S$ be a projective family of varieties, and $X$ be a projective variety. Are the sets $\Iso_X(\X/S), \Iso_X^1(\X/S)$  constructible (Zariski/analytically)?
	\end{question}
	\begin{theorem}
		\label{th: particular_case}
		Let $X$ be a projective smooth threefold such that $K_X\sim 0$, and $\X/S$ a  projective smooth family of varieties. Then the subsets $\Iso_X(\X/S)$ and $\Iso^1_X(\X/S)$ are constructible in $S(\Ko)$.
	\end{theorem}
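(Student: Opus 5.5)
We propose to prove \cref{th: particular_case} by combining Hodge-theoretic period maps with a boundedness-of-polarization argument, in the spirit of Szendr\H{o}i's treatment of Calabi--Yau threefolds \cite{Sze99}. The strategy is to reduce constructibility to the criterion of \cref{intro-theorem: constructibility_criterion}, applied over a stratification of $S$.

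First we reduce to $\Iso_X^1$. By Kawamata's theorem \cite{Kaw08}, any small birational map $\X_s \dashrightarrow X$ between smooth projective threefolds with trivial canonical class decomposes into flops. Moreover, there are only finitely many smooth minimal models of $X$ up to isomorphism, modulo the action of birational automorphisms on the movable cone; in dimension three with $K\sim 0$ this follows from Kawamata's finiteness of models under bounded polarization together with the Morrison--Kawamata results. Hence $\Iso^1_X(\X/S)$ is a finite union of sets $\Iso_{X_i}(\X/S)$, where $X_1,\dots,X_k$ are representatives of the marked minimal models that can appear. It therefore suffices to prove that each $\Iso_{X_i}(\X/S)$ is constructible. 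If the finiteness of models is not available directly, we would instead argue that the flop locus is cut out by closed conditions on the relative Hilbert scheme of graphs, whose boundedness follows from bounding the degree of the graph.

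Next we treat $\Iso_X(\X/S)$. By stratification of the base we may assume that $S$ is smooth and connected, that $R^2f_*\Zi$ is a local system, and that $\A$ is a relatively ample line bundle on $\X/S$. For $s\in\Iso_X(\X/S)$, pick an isomorphism $\varphi_s\colon X\to\X_s$; then $\varphi_s^*\A_s$ is an ample class of fixed volume $N=\A_s^3$. By \cite[Corollary 4.5]{Sze99}, $\Aut(X)$ acts on the set of ample classes of volume $N$ with finitely many orbits. Composing $\varphi_s$ with a suitable automorphism, we may therefore arrange that $\varphi_s^*\A_s\equiv A_{i_s}$ for one of finitely many ample classes $A_1,\dots,A_r$. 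The second condition of \cref{intro-theorem: constructibility_criterion} then holds, and the criterion yields constructibility.

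It remains to justify the criterion direction used here, since at this point in the paper it has only been stated. The idea is to pass to polarized varieties $(X,A_i)$. Polarized isomorphisms $(X,A_i)\approx(\X_s,\A_s)$, up to numerical equivalence (the group $\mathrm{Pic}^0$ acts and is harmless since $H^1(X,\Os_X)$ may be nonzero only for abelian factors), are parametrized by the relative Isom scheme $\mathrm{Isom}_S\bigl((X\times S, A_i), (\X,\A)\bigr)$. This scheme is of finite type over $S$, so its image in $S$ is constructible by Chevalley's theorem. To handle numerical rather than linear equivalence, we twist $\A$ by the finite-type family $\mathbf{Pic}^{\tau}$ of numerically trivial bundles.

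The main obstacle is the finiteness input in the first step, namely that only finitely many $\Aut(X)$-orbits of polarizations (respectively, finitely many flop models) occur. The route we would try first is to deduce it from Szendr\H{o}i's results via Hodge theory. If $H^1(X,\Os_X)\neq 0$, so that $X$ is not a strict Calabi--Yau threefold, we would first pass to the Beauville--Bogomolov decomposition $X\approx (A\times Y)/G$ on a finite étale cover and treat the abelian factor by \cite{NN81}.
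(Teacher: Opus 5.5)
There is a genuine gap, and it sits where you locate the ``main obstacle'': your argument needs two finiteness statements as \emph{inputs}, and in the generality of the theorem they are not available.

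First, global finiteness of minimal models of a Calabi--Yau threefold up to isomorphism is the open Kawamata--Morrison problem. What is actually known is finiteness for birational models admitting a bounded polarization \cite[Corollary 4.4]{Sze99} and finiteness of ample orbits \cite[Corollary 4.5]{Sze99}. Both hold only for strict Calabi--Yau threefolds, i.e.\ $h^1(X,\Os_X)=h^2(X,\Os_X)=0$. The theorem, however, covers every smooth threefold with $K_X\sim 0$: abelian threefolds, $E\times Y$ with $Y$ a K3 surface, and free quotients of such with trivial canonical class. Your sentence about the Beauville--Bogomolov decomposition does not produce finiteness of $\Aut(X)$-orbits of ample classes of fixed volume, nor finiteness of small modifications, for these. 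For quotients you would have to control how automorphisms and flops descend.

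You also cannot borrow \cref{intro-theorem: orbits} or \cref{intro-theorem: finiteness} from the paper. There they are deduced \emph{from} this constructibility result via the criterion, so using them would be circular. Your fallback via graphs also fails. When the group of pseudo-automorphisms is infinite, the degrees of small birational maps $X\dashrightarrow\X_s$ are unbounded. Showing that each $s$ admits one of bounded degree is essentially the finiteness statement you are trying to avoid.

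The missing idea is that no finiteness input is needed: the paper proves isotriviality directly over a dense open set. It assumes $\Iso^1_X(\X/S)$ is dense and takes the polarized $\Zi$-VHS on $R^3f_*\Zi$. Then:
\begin{itemize}
\item By \cite[Corollary 4.12]{Kol89}, all fibers over $\Iso^1_X(\X/S)$ have isomorphic integral Hodge structures.
\item By \cite[Lemma 2.22]{BFMT25}, the image of $\Iso^1_X(\X/S)$ under the period map is therefore finite.
\item By \cite{BBT23}, the preimage of this finite set is closed algebraic and dense, hence all of $S$, so the period map is constant.
\item Hence the line $H^{3,0}$ is constant and the Hodge bundle is torsion (\cref{lemma: hodge-trivial}).
\item Ambro's theorem (\cref{th: Ambro_isotriviality}) then gives $\X_s\approx X'$ on a dense open set, where $X'$ is small-birational to $X$; Noetherian induction finishes.
\end{itemize}
This uses only $h^{3,0}=1$, so it treats all $K$-trivial threefolds uniformly. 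It is also what lets the paper obtain the finiteness theorems as consequences rather than hypotheses.

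For strict Calabi--Yau threefolds your route does close up. Replace the global finiteness claim by \cite[Corollary 4.4]{Sze99} applied to the fibers, polarized by a relatively very ample power of $\A$. Your Isom-scheme plus $\mathbf{Pic}^\tau$ argument for the criterion is a reasonable, shorter substitute for the paper's flag-Hilbert/Picard/PGL argument.
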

	\begin{proof}
		Without loss of generality, we can assume that the set $\Iso_X^1(\X/S)$ is dense in $S$. Moreover, we may assume that for every closed point $s\in S$ the fiber $\X_s$ is a K-trivial threefold (cf. \cref{lemma: klt_0_families}). Hence, $\X/S$ is a lc-trvial fibration (see \cref{def: lc-trivial}).
		
		To be definite, we assume that $X = \X_{s_0}$ for some point $s_0 \in \Iso_X(\X/S)$. Choose an ample line bundle $\Li$ on $\X/S$, and put $L = \Li_{s_0}$. According to \cite[Corollary 2.3.5]{CMP17}, there is a polarization $Q$ on all of $V_\Zi = H^3(X, \Q)\cap H^3(X, \Zi)$ associated to the cup product on $H^3(X, \Zi)$ and the Lefschetz decomposition of $H^3(X, \Q)$.
		The data $V = (V_\Zi, F^\bullet V_\Ko, Q)$ is a polarized integral Hodge structure of weight $3$, where $F^\bullet V_\Ko$ is the natural Hodge filtration on $V_\Ko = H^3(X, \Ko)$. Set $\Gamma^a = \text{Aut}(V_\Zi, Q)$.

		In the standard way \cite{Gri70}, we define a variation $\V =  (\V_\Zi, \F^\bullet \V_{\Os^\an}, \Qp)$ of integral polarized Hodge structures of weight $3$ associated with $\V_{\Os^\an} = (R^3f_* \underline{\Ko})\otimes \mathcal{O}_{S^\an}$, and our choice of an ample line bundle $\Li$ on $\X/S$. Let $\Phi: S^\an\to \Gamma^a\backslash\D$ be the associated period map. By \cite[Corollary 4.12]{Kol89}, all fibers $\X_{s}$ over the points $s\in \Iso^1(\X/S)$ have isomorphic unpolarized (integral) Hodge structures. Hence $\Phi(\Iso^1_{{X}}(\X/S))$ is a finite set by \cite[Lemma 2.22]{BFMT25}. Then, it follows from algebraicity of period maps (cf. \cite{BBT23}) that $\Phi$ is constant because $\Iso^1_{{X}}(\X/S)$ is dense in $S$. In particular, the deepest non-zero piece $H^{3,0}(\X_s)$ of the Hodge filtration on $H^3(\X_s, \Ko)$ is constant for all $s\in S^\an$. This implies that the Hodge line bundle $f_*\omega_{\X/S}$ is torsion (cf. \cref{lemma: hodge-trivial}).
		According to \cite[Theorem 4.7]{Amb05}, there exists a dense open subset $U\subseteq S$ such that $\X_s \approx X$ for every closed point $s\in U$. By Noetherian induction, there exist varieties $X_i$, and a finite decomposition of the base $S(\Ko) = \bigcup_i S_i$ into constructible sets $S_i$ such that for each index $i$ we have $\X_s \approx X_i$ for all $s \in S_i$. This concludes the proof. 
	\end{proof}
	
	\begin{definition}
		Suppose $(X, B)$ is a pair. Then the $\R$-divisor $B$ can be uniquely written as a difference of two effective $\R$-divisors $B_{\ge 0}$ and $B_{\le 0}$ without common components. We say that $N = \lceil B_{\le 0} \rceil$ is the \textit{negative part} of $B$, and $F = B - \lfloor B \rfloor$ is the \textit{fractional part} of $B$. In general, $F$ and $N$ may have common components. Then we can write
		$
			B = \lfloor B_{\ge 0} \rfloor + F - N.
		$
	\end{definition}
	
	\begin{construction}
		\label{const: covering_trick}
		Let $(X, B_X)$ be a klt log pair such that $B_X$ is a $\Q$-divisor and $K_X + B_X \sim_\Q 0$. Consider a log resolution $g: (Y, B_Y) \to (X ,B_X)$, where $K_Y + B_Y = g^*(K_X + B_X)$. Write $B_Y = F_Y-N_Y$, where $N_Y$ (resp. $F_Y$) is the negative (resp. fractional) part of~$B_Y$. Suppose $m\in \Zi_{>0}$ is the minimal integer such that $mF$ is integral, and $m(K_Y + B_Y)\sim~0$. Set $L = \mathcal{O}_Y(N_Y-K_Y)$. A choice of the section $s\in H^0(Y, L^{\otimes m})$ such that $mF_Y = (s = 0)$ determines a normalized cyclic cover $\pi:~(Z, B_Z) \to (Y, B_Y)$ of degree $m$ ramified over $F$. As always, $K_Z + B_Z = \pi^*(K_Y + B_Y)$. We shall say that $\pi: (Z, B_Z) \to (Y, B_Y)$ is an \textit{index-1 cover}. Moreover, $\pi_* \omega_Z \cong \bigoplus_{i=0}^{m-1}\omega_Y\otimes L^{\otimes i}(-\lfloor iF \rfloor)$ (cf. \cite[Section 8.10]{Kol07}). In particular, $H^0(Y, \mathcal{O}_Y(N_Y))$ is naturally a direct summand of $H^0(Z, \omega_Z)$. The latter is the deepest non-zero piece of the Hodge filtration on $H^{\dim Z}(Z, \Ko)$.
	\end{construction}
	\begin{remark}
		\label{remark: mixed_are_pure}
		Since $(Y, B_Y)$ is log smooth, $Z$ has (at worst) quotient singularities (see \cite[Section 8.10.4]{Kol07}). Then the canonical mixed Hodge structure on $H^n(Z, \Q)$ is pure (cf. \cite{Ste76}). In particular, we have an isomorphism $H^n(Z, \Q)\cong IH^n(Z, \Q)$ between singular and intersection cohomology for all $0\le n \le 2\dim Z$  (see \cite[Section 11.3]{Max19}).
	\end{remark}
	\begin{remark}
		\label{remark: section_independence}
		A normalized cyclic cover $\pi: (Z, B_Z) \to (Y, B_Y)$ is independent of the choice $s\in H^0(Y, L^{\otimes m})$ up to isomorphism. Indeed, for another section $s^\prime \in H^0(Y, L^{\otimes m})$ such that $mF = (s^\prime = 0)$ we have $s^\prime = \lambda s$ for some $\lambda\in \Ko^\times$  by \cite[Chapter II, Proposition 7.7]{Har77}. This determines an isomorphism of $\Os_X$-algebras $\mathcal{A} = \oplus_{i=0}^{m-1} L^{-i}$ and $\mathcal{A}^\prime = \oplus_{i=0}^{m-1} L^{-i}$ with different multiplication laws corresponding to our choices $s, s^\prime \in H^0(Y, L^{\otimes m})$ (see \cite[Definition 2.50]{KM98}). As a result, the schemes $\text{Spec}_Y \mathcal{A}, \text{Spec}_Y \mathcal{A}^\prime$ (and their normalizations) are isomorphic over $Y$.
	\end{remark}
	
		\begin{theorem}[{\cite[Theorem 0.5]{AH25}}]
			\label{th: invariance_of_deep}
			Suppose $X_1, X_2$ are birationally equivalent normal projective varieties. Let $Y\to X_i$ be a common resolution of singularities. Then the natural maps $IH^n(X_i, \Q) \to H^n(Y, \Q)$ of pure Hodge structures induce isomorphisms for all $0\le n \le 2\dim Y$:
			\begin{equation*}
				\gr^n_F IH^n(X_1, \Ko) \cong \gr^n_F H^n(Y, \Ko) \cong \gr^n_F IH^n(X_2, \Ko).
			\end{equation*}
		\end{theorem}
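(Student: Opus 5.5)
The plan is to compare each $IH^n(X_i,\Q)$ with $H^n(Y,\Q)$ via the decomposition theorem. Then show that the summands complementary to $IH^n(X_i)$ carry no $\gr^n_F$. Since $\gr^n_F H^n(Y,\Ko)=H^0(Y,\Omega^n_Y)$ does not depend on $i$, the chain of isomorphisms follows. By symmetry it suffices to treat a single normal projective $X$ with a resolution $g:Y\to X$. Let $U\subseteq X$ be a dense open subset over which $g$ is an isomorphism, and let $Z=X\smallsetminus U$.

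\emph{Step 1 (decomposition).} By the decomposition theorem of Beilinson--Bernstein--Deligne--Gabber, in Saito's Hodge-module form, $Rg_*\Q_Y[\dim Y]$ splits in the derived category of mixed Hodge modules. It is a direct sum of $IC_X$ and shifted, Tate-twisted intersection complexes $IC_{Z_\beta}(L_\beta)$ supported on closed subsets $Z_\beta\subseteq Z$. Taking hypercohomology gives a decomposition of pure Hodge structures
\[
H^n(Y,\Q)\cong IH^n(X,\Q)\oplus C^n, \qquad C^n=\bigoplus_\beta \mathbb{H}^{\ast}(Z_\beta, IC_{Z_\beta}(L_\beta))(\ast),
\]
and the summand $IH^n(X,\Q)$ is the image of the natural map. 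Morphisms of Hodge structures are strict for $F$, so $\gr^n_F$ is exact on this splitting:
\[
\gr^n_F H^n(Y,\Ko)=\gr^n_F IH^n(X,\Ko)\oplus \gr^n_F C^n_\Ko .
\]

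\emph{Step 2 (the complement has no top Hodge piece).} Let $p:H^n(Y)\to C^n$ be the projection; it is a morphism of Hodge structures. Every summand of $C^n$ is supported on $Z$, so restricting it to $U$ gives zero. Hence the restriction $H^n(Y)\to H^n(g^{-1}(U))$ kills $\ker(\mathrm{id}-p)$ complementary directions; more precisely, any class lying in $C^n$ restricts to $0$ on $g^{-1}(U)$. Now take a holomorphic form $\omega\in H^0(Y,\Omega^n_Y)=F^nH^n(Y,\Ko)$ whose class lies in $C^n_\Ko$. By Deligne's mixed Hodge theory, the kernel of $H^n(Y)\to H^n(g^{-1}(U))$ is the image of Gysin maps from cohomology of components of $g^{-1}(Z)$. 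Each such Gysin map raises Hodge type by at least $(1,1)$, so the image meets $F^n H^n(Y)$ trivially. Hence $\omega=0$. Equivalently, holomorphic $n$-forms restrict injectively to any dense open subset. Since $p$ is strict and surjective, $F^nC^n_\Ko=p(F^nH^n(Y,\Ko))$. That image consists of classes of holomorphic $n$-forms lying in $C^n$, which we have just shown are zero. Therefore $\gr^n_F C^n_\Ko=0$.

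\emph{Step 3 (conclusion).} Combining Steps 1 and 2, the natural map $IH^n(X_i,\Q)\to H^n(Y,\Q)$ induces $\gr^n_F IH^n(X_i,\Ko)\cong \gr^n_F H^n(Y,\Ko)$ for $i=1,2$. Since $Y$ is a common resolution, this gives the stated chain of isomorphisms.

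The main obstacle is Step 2, specifically the claim that $F^n$ of $H^n(Y)$ injects into $H^n(g^{-1}(U))$. I would prove it using a log compactification, where $g^{-1}(Z)$ may be assumed to be an snc divisor after further blowing up; this does not change $H^0(\Omega^n)$. I would then use the Gysin/weight sequence together with the type shift of Gysin maps. A secondary point is checking that the decomposition is compatible with $F$, i.e.\ that $C^n$ is a genuine sub-Hodge structure. This is guaranteed by Saito's theory.
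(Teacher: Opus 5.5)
The paper gives no argument for this statement. It is quoted from \cite[Theorem~0.5]{AH25} and used only in \cref{lemma: bir_ivariant}, for varieties with quotient singularities, where $IH^n=H^n$ by \cref{remark: mixed_are_pure}. So your proposal supplies a proof where the paper has a citation, and the proof is correct.

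Both of your inputs are sound. First, Saito's form of the decomposition theorem gives $Rg_*\Q^H_Y[\dim Y]\cong IC^H_X\oplus R$ in $D^b\mathrm{MHM}(X)$, with $R$ supported on $Z$. Hence $C^n=\mathbb{H}^{n-\dim Y}(X,R)$ is a sub-Hodge structure of $H^n(Y,\Q)$, and it is killed by restriction to $g^{-1}(U)$, since restriction is natural in the complex and $R|_U=0$. (Your phrase about $\ker(\mathrm{id}-p)$ should simply say this.) Second, $F^nH^n(Y,\Ko)=H^0(Y,\Omega^n_Y)$ injects into $H^n(g^{-1}(U),\Ko)$. Your Gysin argument for this works once the boundary $D$ is made snc: the kernel is then the image of $H^{n-2}(D^{(1)})(-1)$, whose $F^n$ vanishes. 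A shorter route is to take a log compactification $\overline V$ of $V=g^{-1}(U)$ dominating $Y$. There $E_1$-degeneration gives $F^nH^n(V,\Ko)=H^0(\overline V,\Omega^n_{\overline V}(\log D))$, which contains $H^0(\overline V,\Omega^n_{\overline V})=H^0(Y,\Omega^n_Y)$. Either way $F^nC^n_\Ko=C^n_\Ko\cap F^nH^n(Y,\Ko)=0$, which is exactly what you need.

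You should make one point explicit, because the statement refers to ``the natural map'': the embedding $IC_X\hookrightarrow Rg_*\Q_Y[\dim Y]$ produced by the decomposition theorem is not canonical in general. This is harmless here, for the following reason.
\begin{itemize}
\item Let $\iota'\colon IC^H_X\to IC^H_X\oplus R$ be any morphism in $D^b\mathrm{MHM}(X)$ that is the identity over $U$. Its first component is a scalar endomorphism of the simple object $IC^H_X$, and that scalar must be $1$.
\item So $\iota'-\iota$ factors through $R$. On hypercohomology it changes the map only by classes landing in $C^n$, which are invisible on $\gr^n_F$ by your Step~2.
\end{itemize}
Hence the isomorphism $\gr^n_F IH^n(X,\Ko)\cong\gr^n_F H^n(Y,\Ko)$ is independent of the choice. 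In the paper's situation, $IC=\Q[\dim]$ and the natural map is the pullback coming from adjunction. That pullback is of the above form, so your result is the one used in \cref{lemma: bir_ivariant}.

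Compared with the bare citation, your route shows why only $\gr^n_F$, and not all of $IH^n$, is birationally invariant. The summands lost under $IH^n(X_i)\subseteq H^n(Y)$ are supported on the exceptional locus and carry no $(n,0)$-part. The cost is that your proof relies on Saito's theory.
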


	\begin{lemma}[Birational invariance of periods]
		\label{lemma: bir_ivariant}
		Let $(X_1, B_{X_1})$ and $(X_2, B_{X_2})$ be 0-equivalent projective klt log pairs such that $B_{X_1}, B_{X_2}$ are $\Q$-divisors, and $K_{X_i} + B_{X_i} \sim_\Q 0$. Let $g_i^\circ: (Y_i, B_{Y_i}) \to (X_i, B_{X_i})$ be log resolutions, and $\pi_i: (Z_i, B_{Z_i})\to (Y_i, B_{Y_i})$ be index-1 coverings. Suppose $g_i: (Y, B_Y) \to (Y_i, B_{Y_i})$ is a common log resolution of $(Y_1, B_{Y_1})$ and $(Y_2, B_{Y_2})$, and $\pi: (Z, B_Z)\to (Y, B_Y)$ is an index-1 cover. Then there exist birational morphisms $\alpha_i: (Z, B_Z) \to (Z_i, B_{Z_i})$ making the following diagram commutative:
		$$
		\xymatrix{
			&(Z, B_Z)\ar@{->}[dl]_{\alpha_1} \ar@{->}[dr]^{\alpha_2} \ar@{->}[dd]^{\pi}&\\
			(Z_1, B_{Z_1})\ar@{->}[dd]_{\pi_1}  & & (Z_2, B_{Z_2})\ar@{->}[dd]^{\pi_2}\\
			&(Y, B_Y)\ar@{->}[dl]_{g_1}\ar@{->}[dr]^{g_2}&\\
			(Y_1, B_{Y_1})\ar@{->}[d]_{g_1^\circ} \ar@{<-->}[rr]&&(Y_2, B_{Y_2})\ar@{->}[d]^{g_2^\circ}\\
			(X_1, B_{X_1})\ar@{<-->}[rr]&&(X_2, B_{X_2}).
		}
		$$
		
		Also, the natural maps $\alpha_i^*: H^n(Z_i, \Q)\to H^n(Z, \Q)$ of (rational) pure Hodge structures induces isomorphisms for all $0\le n\le 2\dim Z$:
		\begin{equation*}
			\gr^n_F H^n(Z_1, \Ko) \cong \gr^n_F H^n(Z, \Ko) \cong \gr^n_F H^n(Z_2, \Ko).
		\end{equation*}
		Moreover, under the natural maps from \cref{const: covering_trick}, we obtain the following:
		$$
		\xymatrix{
			H^0(Z_1, \omega_{Z_1}) \ar@{=}[r]& H^0(Z, \omega_Z)\ar@{=}[r] & H^0(Z_2, \omega_{Z_2})\\
			H^0(Y_1, \Os_{Y_1}(N_{Y_1}))\ar@{=}[r]\ar@{^{(}->}[u] &H^0(Y, \Os_{Y}(N_Y))\ar@{=}[r]\ar@{^{(}->}[u] &H^0(Y_2, \Os_{Y_2}(N_{Y_2})).\ar@{^{(}->}[u]
		}
		$$
		
	\end{lemma}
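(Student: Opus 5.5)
The proof has three steps. First we construct the morphisms $\alpha_i$ using the universal property of normalization. Then we compare top Hodge graded pieces via \cref{th: invariance_of_deep} and \cref{remark: mixed_are_pure}. Finally we identify the summands $H^0(\Os(N))$ through the decomposition of $\pi_*\omega_Z$ in \cref{const: covering_trick}.

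\emph{Construction of $\alpha_i$.} Since the pairs are $0$-equivalent and $g_i$ is crepant, we have $K_Y+B_Y=g_i^*(K_{Y_i}+B_{Y_i})$. Hence the fractional and negative parts satisfy $(g_i)_*F_Y=F_{Y_i}$ and $(g_i)_*N_Y=N_{Y_i}$, and moreover
\[
N_Y-K_Y-g_i^*(N_{Y_i}-K_{Y_i}) = g_i^*F_{Y_i}-F_Y+E
\]
for some $g_i$-exceptional integral divisor $E$. The index $m$ is the same on $Y$ and on $Y_i$: the condition $m(K+B)\sim 0$ is preserved under crepant pullback, and the denominators of $F$ can be compared through the components that are not contracted. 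The only exceptional coefficients that matter are those of $F_Y$, and these are controlled by the klt condition.

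The plan is to show that the normalization of $Y\times_{Y_i}Z_i$ is an index-1 cover of $(Y,B_Y)$. Both $\pi$ and this normalization are cyclic covers of degree $m$ determined by the same rational function up to a constant. Concretely, both correspond to the $m$-th root of a rational section $\sigma$ with $\mathrm{div}(\sigma)=m(K+B)$, pulled back from $Y_i$ to $Y$; equivalently, both arise as the normalization of $Y$ in the field extension $\Ko(Y)(\sigma^{1/m})$. By \cref{remark: section_independence}, this normalization is isomorphic to $Z$ over $Y$. The universal property of normalization then produces the morphism $\alpha_i\colon Z\to Z_i$ lying over $g_i$. Crepancy, $K_Z+B_Z=\alpha_i^*(K_{Z_i}+B_{Z_i})$, follows from $\pi\circ$ and $\pi_i\circ$ both pulling back $K+B$. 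This identification of the two cyclic covers is the step I expect to be the main obstacle. What needs checking is that the function field extension is the same on both sides, i.e.\ that the same $\sigma$ is used; $\sigma$ is unique up to scalar because $H^0(\Os(m(K+B)))$ is one-dimensional.

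\emph{Hodge comparison.} Each $Z_i$ and $Z$ has at worst quotient singularities, so by \cref{remark: mixed_are_pure} we have $H^n\cong IH^n$ as pure Hodge structures. The map $\alpha_i$ is birational between normal projective varieties. Choose a resolution $W\to Z$ and apply \cref{th: invariance_of_deep} twice, once to the pair $(Z,Z_i)$ and once to $(Z, W)$. This gives that $\gr^n_F$ of $H^n(Z_i)$, of $H^n(Z)$ and of $H^n(W)$ all agree. The maps are compatible with $\alpha_i^*$ by functoriality of pullbacks. In particular, for $n=\dim Z$ we obtain $H^0(Z_i,\omega_{Z_i})\cong H^0(Z,\omega_Z)$, since this group is $\gr_F^n$ (reflexive $\omega$ computes $H^0$ of the canonical sheaf of any resolution for quotient singularities).

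\emph{Bottom row.} The isomorphism $\alpha_i^*$ on $H^0(\omega)$ is equivariant for the Galois group $\mu_m$ of the covers, because $\alpha_i$ commutes with the covering actions; this again follows from the identification of function fields above. The summand $H^0(Y,\Os_Y(N_Y))$ is an eigenspace for a fixed character, namely the one corresponding to $i=1$ in $\pi_*\omega_Z\cong\bigoplus\omega_Y\otimes L^{i}(-\lfloor iF\rfloor)$, since $\omega_Y\otimes L = \Os_Y(N_Y)$. The same holds for $H^0(Y_i,\Os(N_{Y_i}))$. Therefore $\alpha_i^*$ restricts to isomorphisms of these eigenspaces, and the restriction coincides with $g_i^*$ composed with the inclusion $\Os(g_i^*N_{Y_i})\subset\Os(N_Y)$. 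Alternatively, one can check directly that $g_{i*}\Os_Y(N_Y)=\Os_{Y_i}(N_{Y_i})$, since $N_Y-g_i^*N_{Y_i}$ is effective and exceptional modulo the fractional correction. Either way, all three spaces are identified, which completes the diagram.
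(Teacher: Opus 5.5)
Your proposal is correct, but it obtains the morphisms $\alpha_i$ by a different mechanism than the paper.

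Both arguments start from the same observation: the section defining $\pi$ pushes forward to the section defining $\pi_i$. In your language, the same rational function $\sigma$ with $\mathrm{div}(\sigma)=m(K+B)$ is used on both sides, unique up to a scalar that is harmless over $\mathbb{C}$. The paper extracts from this only a birational map $\alpha_i\colon Z\dashrightarrow Z_i$, an isomorphism over the locus where $g_i$ is an isomorphism. It then proves regularity with MMP tools:
\begin{itemize}
\item it shows $\alpha_i$ is $D_i'$-nonpositive for $D_i'=\pi^*g_i^*A_i$, using the discrepancy formula for cyclic covers;
\item it applies the negativity lemma on a common resolution to get $p_i^*D_i'=q_i^*\pi_i^*A_i$;
\item it rules out indeterminacy by an intersection-number argument.
\end{itemize}
You instead note that $Z$ and $Z_i$ are the normalizations of $Y$ and $Y_i$ in the same field $\mathbb{C}(Y)(\sigma^{1/m})$. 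Functoriality of integral closure along $g_i$ then gives $\alpha_i$ directly, together with crepancy and $\mu_m$-equivariance.

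Your route is shorter and purely algebraic. The equivariance it produces is exactly what the bottom row of the diagram needs, a step the paper leaves implicit. The paper's route gives nothing essential beyond this. For the Hodge statement, you and the paper both rely on \cref{th: invariance_of_deep} and \cref{remark: mixed_are_pure}; the paper's detour through injectivity of $\alpha_i^*$ is not needed in your version.

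Three small repairs are needed.
\begin{itemize}
\item Your displayed identity has the wrong sign and a spurious term. It should read $N_Y-K_Y-g_i^*(N_{Y_i}-K_{Y_i})=F_Y-g_i^*F_{Y_i}$; it is not used anyway.
\item Equality of the indices $m$ on $Y$ and $Y_i$ has nothing to do with the klt condition. The relation $m(K+B)\sim 0$ is preserved by $g_i^*$ and $g_{i*}$. It already forces $mF=m(K+B)-mK+mN$ to be integral.
\item You should say why $\mathbb{C}(Y)(\sigma^{1/m})$ is a field and $Z$ is integral. This follows from minimality of $m$: if $\sigma=\psi^p$ with $p\mid m$, then $(m/p)(K+B)\sim 0$, a contradiction. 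Hence $T^m-\sigma$ is irreducible.
\end{itemize}
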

	\begin{proof}
		First let us construct natural birational maps $\alpha_i: Z \dashrightarrow Z_i$. Let $L = \Os_Y(N_Y - K_Y), \ L_i = \Os_{Y_i}(N_{Y_i} - K_{Y_i})$ be line bundles as in \cref{const: covering_trick}. Let $s\in H^0(Y, L^{\otimes m})$ be the section inducing the cover $\pi: (Z, B_Z) \to (Y, B_Y)$, ramified over $F_Y$. Since the pairs $(X_1, B_{X_1}), (X_2, B_{X_2})$ are crepant birationally equivalent, we obtain that $(g_i)_*L = L_i$, and $(g_i)_*F_Y = F_{Y_i}$. Hence, the section $s$ induces sections $s_i\in H^0(Y, L_i^{\otimes m})$ such that $(s_i = 0) = mF_{Y_i}$. According to \cref{remark: section_independence}, we can assume that the index-1 covers $\pi_i: (Z_i, B_{Z_i})\to (Y_i, B_{Y_i})$ correspond to the sections $s_i$. Let $U\subseteq Y$ be the largest open subset such that $g_i|_U: U \to g_i(U)$ is an isomorphism for $i = 1, 2$. By construction, the index-1 cover $\pi|_{\pi^{-1}(U)}: \pi^{-1}(U) \to U$ is isomorphic to the index-1 cover $\pi_i|_{\pi_i^{-1}(g_i(U))}: \pi_i^{-1}(g_i(U) \to g_i(U)$ over $g_i(U)$. By $\alpha_i$ denote the constructed isomorphisms $\pi^{-1}(U) \to \pi_i^{-1}(g_i(U)$. As a result, both $\alpha_i: Z \dashrightarrow Z_i$ are natural birational maps making the diagram (stated in the lemma) commutative.
		
		Now let us prove that the birational maps $\alpha_i: Z\dashrightarrow Z_i$ are morphisms. Notice that $\alpha_i$ is a birational contraction because $g_i$ is a birational contraction, and $\pi_i, \pi$ are finite morphisms. Let $A_i$ be a general ample prime divisor on $Y_i$. The divisors $D_i = g_i^*(A_i), \ D_i^\prime = \pi^*(D_i)$ are big\&nef on $Y, Z$ respectively, and $A_i^\prime = \pi_i^* (A_i)$ is ample on $Z_i$. It follows easily from \cite[Definition 3.6.1]{BCHM10} that $g_i$ is a $D_i$-nonpositive morphism. We claim that the map $\alpha_i: Z \dashrightarrow Z_i$ is $D^\prime_i$-nonpositive. In our case, this is equivalent to $(K_{Z} + B_Z + \epsilon D^\prime_i)$-nonpositivity of $\alpha_i$ for any $\epsilon\in (0,1)\cap \Q$ as $K_Z + B_Z \sim 0$. It is clear from the definition of $\alpha_i$ that a prime divisor $E^\prime_i$ on $Z$ is exceptional for $\alpha_i$ if and only if $\pi(E^\prime_i)$ is exceptional for $g_i$. Let $E^\prime_i \subset Z$ be an exceptional prime divisor for $\alpha_i$, and $r\le m = \deg \pi$ be the ramification index of $\pi$ along $E^\prime_i$. Set $E_i = \pi (E^\prime_i)$. In the proof of \cite[Proposition 5.20]{KM98} a local computations shows that $$a(E^\prime_i; Z, B_{Z} + \epsilon D^\prime_i) + 1 = r\left(a(E_i; Y, B_Y + \epsilon D_i) + 1\right).$$ The latter equality together with $(K_{Y} + B_Y + \epsilon D_i)$-nonpositivity of $g_i$ implies that the birational contraction $\alpha_i$ is $(K_{Z} + B_Z + \epsilon D^\prime_i)$-nonpositive by \cite[Lemma 3.6.3]{BCHM10}. Let $W_i$ be a common log resolution of $(Z, B_Z + \epsilon D^\prime_i)$ and $(Z_i, B_{Z_i} + \epsilon A^\prime_i)$, and $p_i, q_i$ be the induced morphisms:
		$$
		\xymatrix{
			&W_i\ar@{->}[dl]_{p_i}\ar@{->}[dr]^{q_i}&\\
			Z\ar@{-->}[rr]^{\alpha_i}&&Z_i.
		}
		$$
		
		By the definition of nonpositive birational contractions, we have that $$E_i := p^*_i(K_Z + B_Z + \epsilon D_i^\prime) - q^*_i(K_{Z_i} + B_{Z_i} + \epsilon A_i^\prime) \sim_\Q \epsilon(p_i^*D^\prime - q_i^*A_i^\prime)$$ is an effective and $q_i$-exceptional $\Q$-divisor. Furthermore,  since $D^\prime$ is big\&nef, $E_i$ is $q_i$-nef. By the negativity lemma \cite[Lemma 3.6.2]{BCHM10}, we obtain that $E_i =0$. In particular, the $0$-pairs $(Z, B_Z)$ and $(Z_i, B_{Z_i})$ are 0-equivalent, and $p_i^*(D^\prime) = q_i^*(A_i^\prime)$. Assume that there exists a point $\star_i\in Z$ where the map $\alpha_i$ is not defined. Then there is a curve $C_i\subset W_i$ such that $p_i(C_i) = \star_i$, and $q_i(C_i)$ is a curve. But, $0 =p_i^*(D^\prime) \cdot C_i =  q_i^*(A_i^\prime) \cdot C_i = A_i^\prime \cdot q_i(C_i) > 0$ that is an absurd. Therefore, we proved that the natural maps $\alpha_i$ are birational morphisms $\alpha_i: (Z, B_Z) \to (Z_i, B_{Z_i})$ of crepant birationally equivalent projective klt $0$-pairs.
		
		Finally, we prove the statement about Hodge structures. Let $\nu: \widetilde{Z}\to Z$ be a resolution of singularities. Then the morphisms of (rational) pure Hodge structures $\nu^*: H^n(Z, \Q) \to H^n(\widetilde{Z}, \Q)$ and $(\alpha_i\circ \nu )^*: H^n(Z_i, \Q) \to H^n(\widetilde{Z}, \Q)$ are injective due to \cref{remark: mixed_are_pure} and \cite[Theorem 11.1.19]{Max19}. Therefore, the morphisms $\alpha_i^*: H^n(Z_i, \Q) \to H^n(Z, \Q)$ are also injective. Hence, the isomorphisms $\gr^n_F H^n(Z_1, \Ko) \cong \gr^n_F H^n(Z, \Ko) \cong \gr^n_F H^n(Z_2, \Ko)$ follows from \cref{th: invariance_of_deep} and \cref{remark: mixed_are_pure}.
	\end{proof}

	\subsection{Lc-trivial fibrations}
	We follow an exposition in \cite[Section 6]{BFMT25}, and refer a reader to \textit{loc. cit.} for Hodge theoretic properties of lc-trivial fibrations, and details of the following definitions.
	\begin{definition}
		\label{def: lc-trivial}
		Let $(\X, \B)$ be a log pair, $\B$ is a $\Q$-divisor, and $S$ a normal variety. A proper contraction $f: \X \to S$ is called \textit{lc}-trivial if
		\begin{enumerate}
			\item 
			$(\X_\eta, \B_\eta)$ is a lc log pair, where $\eta$ is the generic point of $S$.
			\item 
			$K_\X + \B \sim_{\Q, S} 0$, i.e. $K_\X + \B \sim_\Q f^*L$ for some $\Q$-Cartier $\Q$-divisor $L$ on $S$.
			\item 
			$\textnormal{rk}\, f_*\Os\left(\lceil \mathbf{A}^*(\X, \B)\rceil\right) = 1$,
		\end{enumerate}
		where $\mathbf{A}^*(\X, \B)$ is the b-divisor defined by taking its trace on any birational model $\Y\to \X$ $$\textnormal{A}^*(\X, \B)_{\Y} = K_{\Y} - g^*(K_\X + \B) + \sum_{a(E; \X, \B) = 1} E.$$
	\end{definition}
	\begin{remark}
		The property $(3)$ holds if the generic fiber $(\X_\eta, \B_\eta)$ is klt, and $\B_\eta$ is effective (cf. \cite[Section 8.4]{Kol07}).
	\end{remark}

	\begin{definition}[cf. {\cite[Definition 2.8]{BFMT25}}]
		\label{def: CY-VHS}
		Let $S$ be a regular variety. Let $\V = (\V_\Ko, \F^\bullet\V_{\Os^\an})$ be a polarizable complex variation of pure Hodge structures ($\Ko$-VHS) on $S$. We say $\V$ is a \textit{CY variation} if the deepest non-zero part $\F^n\V_{\Os^\an}$ of the Hodge filtration has rank one, in which case we call $\Ho_S = \F^n \V_{\Os^\an}\cong \gr^n_F\V_{\Os^\an}$ the Hodge bundle.
	\end{definition}

	\begin{construction}
		\label{const: main}
		Let $f_\X: (\X, \B_\X) \to S$ be a projective lc-trivial fibration of relative dimension $d\in \Zi_{>0}$. We shall assume that $(\X, \B)$ is a klt log pair, and $(\X/S, \B)$ is a projective elementary family. Consider a log resolution $g: (\Y, \B_\Y) \to (\X, \B_\X)$ such that $(\Y/S, \B_\Y)$ is a projective elementary family, and $(\Y, \B)$ is log smooth over $S$. As in \cref{const: covering_trick}, we write $\B_\Y = \F_\Y - \N_\Y$, and consider $\pi: (\Z, \B_\Z)\to (\Y, \B_\Y)$ an index-1 cover of degree $m\in \Zi_{>0}$. Choose a $\mu_m$-equivariant resolution of singularities $\nu: \widetilde{\Z} \to \Z$:
		$$
		\xymatrix{
			(\Y, \B_\Y)\ar@{->}[d]_{g} \ar@/_2.5pc/[dd]_{f_\Y} &(\Z, \B_\Z)\ar@{->}[l]_{\pi} \ar@{->}[ddl]^(0.4){f_\Z}&(\widetilde{\Z}, \B_{\widetilde{\Z}})\ar@{->}[l]_(0.5){\nu}  \ar@{->}[ddll]^(0.4){f_{\widetilde{\Z}}}\\
			(\X, \B_\X) \ar@{->}[d]_{f_\X} & &\\
			S & &.
		}
		$$
		
		The sheaf $(\pi\circ\nu)_*\omega_{\widetilde{\Z}}$ is $\mu_m$-equivariant, and it admits a decomposition into $\mu_m$-isotypic components:
		\begin{equation*}
			(\pi\circ\nu)_*\omega_{\widetilde{\Z}} = \pi_* \omega_{\Z} \cong \bigoplus_{i=0}^{m-1} \pi_*(\omega_\Z)_{\chi^i} \cong \bigoplus_{i=0}^{m-1}\omega_{\Y}\otimes \Li^{\otimes i}\left(-\lfloor i \F_\Y\rfloor\right),
		\end{equation*}
		where $\chi$ is a generator of the character group $\hat{\mu}_m = \text{Hom}(\mu_m, \Ko^\times)$. In particular, we get the direct summand for $i=1$:
		\begin{equation*}
			\Os_{\Y}(\N_\Y) \cong (\pi\circ \nu)_*\left(\omega_{\widetilde{\Z}}\right)_\chi \hookrightarrow (\pi\circ \nu)_* \omega_{\widetilde{\Z}}.
		\end{equation*}
		The $\chi$-isotypic component of $R^d(f_{\widetilde{\Z}})_*\underline{\Zi}$ together with a choice of an ample line bundle on $\widetilde{\Z}/S$ determines a polarized CY $\Zi$-VHS $\V$ whose Hodge bundle is $\Ho_S = (f_\Y)_*\Os_\Y(\N_\Y)$.
	\end{construction}

		\begin{definition}
		\label{def: CY_period_map}
		Let $S$ be a regular variety, and $s_0\in S$ be a closed point. Let $\V = (\V_\Zi, \F^\bullet\V_{\Os^\an}, \Qp)$ be a variation of pure polarized integral Hodge structures ($\Zi$-VHS) on $S^\an$. Put $V_\Zi = (\V_\Zi)_{s_0}$, $Q = \Qp_{s_0}$. Let $\Gamma^a = \text{Aut}(V_\Zi, Q)$ be the \textit{arithmetic monodromy group}, that is, a subgroup of $\text{GL}(V_\Zi)$ preserving the polarization $Q$. By $\Phi^a: S^\an\to \Gamma^a\backslash \D$ we denote the period map associated with $\V$ (and $s_0$), which is a holomorphic map between analytic spaces \cite[Lemma-Definition 4.6.3]{CMP17}.
		
		Suppose $\V$ is a CY variation, that is, $(\V_\Ko, \F^\bullet \V_{\Os^\an})$ is a CY variation. Let $\D^\textnormal{CY}$ denote the image of $\D$ under the natural projection $\D\to \Pp(V_\Ko)$ to the projectivization of $V_\Ko$. We call a composition $$\Phi^\CY: S^\an\to  \Gamma^a\backslash \D \twoheadrightarrow  \Gamma^a\backslash \D^\CY$$ the \textit{CY period map} associated with $\V$ (and $s_0$). By construction, $\Phi^\CY$ is holomorphic.
	\end{definition}
	\begin{remark}
		It is readily seen that (CY) period maps are independent of the choice $s_0\in S$ up to isomorphism.
	\end{remark}
	
	\begin{lemma}
		\label{lemma: hodge-trivial}
		Let $S$ be a regular variety, and $\V$ be a polarized CY $\Zi$-VHS on $S^\an$. Let $\Ho$ be the Hodge bundle, $\Phi^\CY: S^\an\to \Gamma^a\backslash\D^\CY$ be a CY period map associated with $\V$. If $\Phi^\CY(S^\an) = \{\star\}$, then $\Ho\sim_\Q 0$.
	\end{lemma}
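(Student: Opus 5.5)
The plan is to turn the constancy of the CY period map into a flat, monodromy-invariant trivialization of $\Ho$ (up to a finite group), and from that conclude that some tensor power of $\Ho$ is trivial.

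First, pass to a universal cover or a period domain lift. If $\Phi^\CY(S^\an) = \{\star\}$, choose a point $[\omega_0]\in \D^\CY \subseteq \Pp(V_\Ko)$ lying over $\star$. Pull $\V$ back to the universal cover $\widetilde{S}^\an$, where the local system is trivialized as $V_\Zi$. The lifted period map $\widetilde{S}^\an \to \D^\CY$ lands in the discrete orbit $\Gamma^a\cdot[\omega_0]$, because $\Gamma^a$ acts properly discontinuously on $\D$. Since $\widetilde{S}^\an$ is connected, the lift is constant, equal to $[\omega_0]$. Hence the Hodge line $\Ho_{\tilde s}\subseteq V_\Ko$ is the fixed line $\Ko\omega_0$ for all $\tilde s$. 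In other words, $\Ho$ is a flat sub-line-bundle of $\V_{\Os^\an}$: it equals $\mathbb{L}\otimes \Os_{S^\an}$ for a rank-one sub-local system $\mathbb{L}\subseteq \V_\Ko$.

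Next, show that the monodromy of $\mathbb{L}$ is finite. The monodromy group $\Gamma\subseteq \Gamma^a$ preserves the line $\Ko\omega_0$ and acts on it by a character $\rho:\Gamma\to \Ko^\times$. Two facts bound $\rho$.
\begin{itemize}
\item The polarization gives $\Qp$-positivity of the form $i^{n}Q(\omega,\bar\omega)>0$ on $F^n$. Since $Q$ is $\Gamma$-invariant and real, $|\rho(\gamma)|^2 = 1$.
\item $\rho(\gamma)$ is an eigenvalue of an integral matrix $\gamma\in \mathrm{GL}(V_\Zi)$, so it is an algebraic integer of bounded degree. Its Galois conjugates are also eigenvalues of $\gamma$.
\end{itemize}
To conclude that $\rho(\gamma)$ is a root of unity (Kronecker), I need all Galois conjugates to have absolute value one as well. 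I would get this from the stabilizer of $\omega_0$ in $\Gamma^a$. The stabilizer of a point of $\D$ in $\Gamma^a$ is finite: it is discrete and contained in the compact isotropy group of the Hodge structure. The lift of $\Phi$ itself to $\D$ need not be constant, but I would argue differently. Consider the stabilizer of the line $\Ko\omega_0$ together with the Hodge norm on it. Alternatively, apply the standard argument of Deligne (Griffiths' theorem of the fixed part / semisimplicity) that a rank-one sub-VHS of a polarized $\Zi$-VHS defined over a number field has finite monodromy. The direct route is as follows. The sub-local system $\mathbb{L}$ carries a flat section $\omega_0$ of constant Hodge norm, since the norm is flat on a flat sub-VHS of type $(n,0)$. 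So $\rho$ is unitary, and by Deligne's semisimplicity its Galois conjugates $\mathbb{L}^\sigma\subseteq \V_\Ko$ are again polarized sub-VHS, hence unitary. Kronecker's theorem then gives $\rho(\gamma)$ a root of unity of order bounded in terms of $\mathrm{rk}\,V_\Zi$. Therefore $\rho(\Gamma)$ is a finite group of order $k$.

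Finally, $\mathbb{L}^{\otimes k}$ is a trivial local system, so $\Ho^{\otimes k}\cong \Os_{S^\an}$ analytically, via a nowhere-vanishing flat section. To pass to the algebraic statement $\Ho\sim_\Q 0$, I would use that $\Ho$ is algebraic here: it is $(f_\Y)_*\Os_\Y(\N_\Y)$ in \cref{const: main}, or more generally by Schmid/Deligne extension and GAGA for the Hodge bundle with regular singularities. The flat section has moderate growth, so it is an algebraic nowhere-vanishing section, giving $\Ho^{\otimes k}\cong\Os_S$.

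I expect the main obstacle to be the finiteness of the monodromy character, specifically controlling the Galois conjugates of $\rho(\gamma)$. My first attempt will be to invoke Deligne's semisimplicity together with Galois conjugation of sub-VHS.
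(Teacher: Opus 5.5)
Your proposal is correct and follows the paper's route: constancy of $\Phi^\CY$ makes the Hodge line a flat sub-line-bundle of $\V_{\Os^\an}$, and finiteness of its monodromy, which the paper takes from \cite{Del71}, gives $\Ho\sim_\Q 0$. Your unitarity-plus-Kronecker unpacking of that citation works, but the Galois-conjugate step needs Deligne's result that the $\rho^\sigma$-isotypic summands of $\V_\Ko$ are sub-VHS (semisimplicity alone does not suffice); also, the orbit $\Gamma^a\cdot[\omega_0]$ in $\D^\CY$ need not be discrete (stabilizers of points of $\D^\CY$ in the real group can be noncompact, e.g.\ in weight $3$ with $h^{2,1}>0$), though it is countable, and that already forces a continuous map from the connected $\widetilde{S}^\an$ into it to be constant.
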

	\begin{proof}
		Let $\Gamma\le \Gamma^a$ be the monodromy group. Note that the CY period map $\Phi^\CY: S^\an \to \Gamma^a \backslash \D^\CY$ naturally factors through the period map $\Phi^\CY_0: S^\an \to \Gamma \backslash \D^\CY$. The fibers of the projection $\xymatrix@C=1.5pc{\Gamma \backslash \D^\CY\ar@{->>}[r]& \Gamma^a \backslash \D^\CY}$ are at most countable discrete sets. Hence $\Phi^\CY_0(S^\an)$ is a finite set. As $S$ is irreducible, we have that $\Phi^\CY_0(S^\an)$ is a single point. By \cref{def: CY-VHS}, $\Ho\cong \text{gr}^n_F \V_{\Os^\an}$ for some $n\in\mathbb{Z}_{\ge 0}$. Since the period map $\Phi^\CY_0$ is constant, we conclude that $\text{gr}^n_F \V_{\Os^\an}$ is flat (see \cite[Lemma 13.1.8]{CMP17}). Then $\Ho\cong \text{gr}^n_F \V_{\Os^\an}$ is torsion according to \cite{Del71}.
	\end{proof}
	
	\begin{definition}
		Let $f: (\X, \B) \to S$ be a lc-trivial fibration. \textit{The boundary divisor} $B_S$ is the $\Q$-divisor on $S$ whose coefficient along a prime divisor $P$ is given by
		\begin{equation*}
			\begin{gathered}
				\text{ord}_P(B_S) = 1 - \text{lct}_{\eta_P}(\X, \B; f^*P),\\
				\text{lct}_{\eta_P}(\X, \B; f^*P) = \sup \{t\in \R: (\X, \B + tf^*(P)) \text{ is lc over the generic point $\eta_P$ of $P$ }\}.
			\end{gathered}
		\end{equation*}
	\end{definition}
	
	\begin{definition}[Prokhorov--Shokurov' moduli divisor]
		Let $f: (\X, \B) \to S$ be a lc-trivial fibration, and $\eta\in S$ the generic point. Suppose $m\in \Zi_{>0}$ is the minimal integer such that $m\B_\eta$ is integral and $m(K_{\X_\eta} + \B_\eta) \sim 0$. \textit{A formal moduli divisor} $M_S$ is a $\Q$-divisor on $S$ satisfying the following property: there exists a rational function $\phi\in k(\X)^\times$ such that
		\begin{equation*}
			m(K_\X + \B) = m\left(f^*(K_S + B_S + M_S)\right) + \text{div}(\phi).
		\end{equation*}
	\end{definition}
		Note that the formal moduli divisor $M_S$ is defined up to $m$-linear equivalence, choices of $K_S, K_\X, \phi$. Nevertheless, the divisor $M_S$ captures variation for klt fibers of an lc-trivial fibration as the following theorem shows. In fact, $M_S \sim_\Q \Ho_S$ due to \cite[Theorem 6.28]{BFMT25}.
	\begin{theorem}[{\cite[Theorem 4.7]{Amb05}}]
		\label{th: Ambro_isotriviality}
		Let $f: (\X, \B)\to S$ be a lc-trivial fibration satisfying the following conditions:
		\begin{itemize}
			\item 
			$(\X_\eta, \B_\eta)$ is a projective klt log pair, and $ \B_\eta$ is an effective $\Q$-divisor.
			\item 
			$B_S = 0$.
			\item 
			$M_S \sim_\Q 0$.
		\end{itemize}
		Then there exists a finite Galois covering $\tau: S^\prime \to S$, which is \'etale in codimension 1, a non-empty open subset $U\subseteq S^\prime$, a log pair $(X, B)$, and an isomorphism $(\X, \B)\times_S U \approx (X, B)\times_\Ko U$ over $U$.
	\end{theorem}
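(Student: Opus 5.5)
This is Ambro's isotriviality theorem; I would prove it by reading off the triviality of the variation of Hodge structure from $M_S\sim_\Q 0$, turning that into a constant period map, and then rigidifying the fibers. Throughout I work with the objects of \cref{const: main}: a log resolution $(\Y,\B_\Y)$, an index-1 cover $(\Z,\B_\Z)$, and a $\mu_m$-equivariant resolution $\widetilde{\Z}$. Since $B_S=0$ and $M_S\sim_\Q 0$, the relation $m(K_\X+\B)=mf^*(K_S+B_S+M_S)+\mathrm{div}(\phi)$ together with $M_S\sim_\Q\Ho_S$ from \cite[Theorem 6.28]{BFMT25} shows that the Hodge bundle $\Ho_S=(f_\Y)_*\Os_\Y(\N_\Y)$ of the CY variation $\V$ is $\Q$-trivial on $S$. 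I expect the hypothesis $B_S=0$ to enter at one point only: it guarantees that $\V$ has finite local monodromy around divisors of $S$ and that there is no boundary contribution. After a finite base change ramified only in codimension $\ge 2$, this yields the Galois cover $\tau:S'\to S$ that is étale in codimension one.

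The next step is to show the CY period map is constant. The Hodge metric on $\Ho_S$ has curvature form bounded below by the pullback of the Weil--Petersson form under $\Phi^\CY$, and it is semipositive by Griffiths transversality. If $\Ho_S$ is torsion, a trivializing section of $\Ho_S^{\otimes k}$ gives a global pluriharmonic function, namely the log of its Hodge norm. On a compactification it extends with at worst logarithmic growth, because $B_S=0$ and the local monodromy is finite, so by the maximum principle it is constant. Hence the curvature vanishes, $d\Phi^\CY=0$, and $\Phi^\CY$ is constant. After passing to $S'$, which kills the finite monodromy, the variation $\V$ itself becomes a constant variation on a dense open subset.

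Finally I would upgrade ``constant periods of the $\chi$-part'' to ``isotrivial fibers''. The fiber $(\X_s,\B_s)$ is recovered from the pair $(\Z_s,\mu_m)$ as a quotient followed by the crepant contraction $\Y_s\to\X_s$. So it suffices to see that the family $\Z/S'$, together with its $\mu_m$-action and the polarization induced from $\X$, has trivial moduli. For this I would run the Kodaira--Spencer argument: the derivative of the period map on the $\chi$-eigenpart, $H^1(T)\to\mathrm{Hom}(H^0(\omega)_\chi,H^1(\Omega^{d-1})_\chi)$, is injective on the relevant deformations because contraction with the nowhere vanishing (on $\Y$, twisted by $\N_\Y$) generator of $H^0(\omega_\Z)_\chi$ is an isomorphism $T\cong\Omega^{d-1}(\log)$ on the log-smooth model. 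Vanishing of the period derivative then forces the Kodaira--Spencer class of the log-smooth family $(\Y,\B_\Y)/S'$ to vanish over an open set. Using boundedness of the polarized fibers, the family then has a constant classifying map to the coarse moduli of polarized klt pairs (a Hilbert/Isom scheme argument). So all fibers over an open $U$ are isomorphic, and after a further finite étale base change trivializing the finite group $\mathrm{Isom}$-torsor we get $(\X,\B)\times_S U\approx(X,B)\times U$.

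The main obstacle is the last step. Constant periods of only the $\chi$-isotypic, $(d,0)$-part control only the deformations seen by the holomorphic form, so the injectivity of the infinitesimal Torelli-type map must be justified. I would first try to obtain it from the contraction isomorphism for log Calabi--Yau pairs, where $\omega$ is trivial, combined with the Bogomolov--Tian--Todorov unobstructedness, so that a zero derivative implies a zero Kodaira--Spencer map. If that fails, I would fall back on Kawamata-type arguments using semipositivity and the rigidity of Iso-schemes.
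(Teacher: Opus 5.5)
The paper gives no argument of its own for this statement. It quotes Ambro, where isotriviality comes out of his structure theorem for the moduli b-divisor: after a generically finite base change, and over an open set, the fibration is pulled back from one of maximal variation whose moduli part is nef and big, so $M_S\sim_\Q 0$ forces zero variation. Your sketch therefore has to stand on its own, and its decisive step fails.

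\textbf{The Torelli step.} You claim that constancy of the $\chi$-period forces the Kodaira--Spencer class of the log-smooth family $(\Y,\B_\Y)/S'$ to vanish. This is false.
\begin{itemize}
\item Take a K3 surface $X$, the trivial family $\X=X\times S$ with $\B=0$, and the log resolution $\Y=\mathrm{Bl}_\Gamma(X\times S)$ along the graph $\Gamma$ of a non-constant map $\gamma\colon S\to X$.
\item Then $\B_\Y=-\N_\Y$, with $\N_\Y$ the exceptional divisor. The $\chi$-generator is the pull-back of the $2$-form of $X$, so it vanishes on $\N_\Y$ and the CY period is constant.
\item Yet $\Y_s=\mathrm{Bl}_{\gamma(s)}X$ moves, with non-zero Kodaira--Spencer class because $H^0(T_X)=0$.
\item Taking $\X$ itself to be this blow-up, with $\B=-\N$, gives \cref{ex: blowup_k3}, where the conclusion of the theorem actually fails.
\end{itemize}
The culprit is the twist you mention only in passing. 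The generator vanishes along $\N_\Y$, so contraction with it is not an isomorphism from the logarithmic tangent sheaf to a twist of $\Omega^{d-1}_\Y(\log)$. Deformations that move data over $\N_\Y$ are therefore invisible to the period.

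Nothing in your argument uses $\B_\eta\ge 0$, so it cannot distinguish the theorem from \cref{ex: blowup_k3}. What is missing is a way back from the log resolution to $(\X_s,\B_s)$ in which effectivity is actually used; note that $\N_\Y\neq 0$ is unavoidable once the fibres are singular. The later appeal to a coarse moduli space of polarized klt pairs is a further unproved input.

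\textbf{The constancy step} is not proved either.
\begin{itemize}
\item For a trivializing section $\sigma$ of $\Ho_S^{\otimes k}$, $\log\|\sigma\|$ is only plurisuperharmonic: its $i\partial\bar\partial$ is minus the semipositive Hodge curvature. Pluriharmonicity is what you are trying to prove.
\item A maximum principle needs a compact base and a lower bound on $\|\sigma\|$ at the boundary, i.e.\ Schmid-type norm estimates for the correct extension of the Hodge bundle.
\item Your justification, that $B_S=0$ forces finite local monodromy, is wrong. An $I_n$ fibre of an elliptic fibration is reduced with normal crossings, so it has lc threshold $1$ and contributes nothing to $B_S$, but its monodromy is infinite. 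Finite monodromy is a consequence of constancy, not an input.
\item The step genuinely needs $S$ proper, as in Ambro. The Legendre family over $S=\mathbb{A}^1\smallsetminus\{0,1\}$ has $B_S=0$, and $M_S\sim_\Q 0$ holds automatically because $\mathrm{Pic}(S)=0$, yet its $j$-invariant is non-constant. So the statement itself fails over a non-proper base, and the remark after it that properness can be dropped is not correct.
\end{itemize}

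\textbf{Étaleness in codimension one} is misattributed. The cover that kills the monodromy of $\V$ is not the cover that trivializes the family. The role of $B_S=0$ is to exclude ramification of (the Galois closure of) the trivializing cover along a divisor. Near such a divisor the family looks like a quotient $(X\times\Delta)/\mu_r$ with $\mu_r$ acting non-trivially on $X$, and the fibre there has lc threshold $1/r<1$.
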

	\begin{remark}
		The definition of a lc-trivial fibration in \textit{loc. cit.} assumes proneness of $\X$ and $S$. However, these conditions are unnecessary for the proof of \cref{th: Ambro_isotriviality}.
	\end{remark}
	One may wonder if \cref{th: Ambro_isotriviality} holds for lc-trivial fibrations such that $(\X_\eta, \B_\eta)$ is lc. The following examples shows this is not the case.
	\begin{example}
		Let $S = \Pp_\Ko \smallsetminus \{0, 1, \infty\}$, $\X = \Pp^1_\Ko\times_\Ko S$, and $\Delta \subset \Pp^1_\Ko\times_\Ko S$ be the restriction of the diagonal. Suppose $$\B = a\Delta + \B^\prime, \ \B^\prime = a_0(\{0\}\times_\Ko S) + a_1(\{1\}\times_\Ko S) + a_{\infty}(\{\infty\}\times_\Ko S)$$
		is an $\Q$-divisor on $X$. Assume that all coefficients $a, a_0, a_1, a_\infty$ are non-zero. Then the projective elementary family $\text{pr}: (\X, \B)\to S$ is not \'etale trivial at any closed point $s_0\in S$, that is, for any \'etale neighborhood $(U, s_0)$ the family $(\X_U/U, \B_U)$ is not trivial over $U$.
		
		If $(a, a_0, a_1, a_\infty) = (1, \frac{1}{3}, \frac{1}{3}, \frac{1}{3})$ then $\text{pr}: (\X, \B)\to S$ is a lc-trivial fibration such that the generic fiber is lc, but not klt. It is clear that $B_S = 0$. Let $M_S$ be a formal moduli divisor. Then the induced lc-trivial fibration $\text{pr}|_\Delta: (\Delta, \B^\prime|_{\Delta}) \to S$ has the same moduli divisor $M_S$. Note $\B^\prime|_{\Delta} =0$, and $\text{pr}|_\Delta: \Delta\to S$ is an isomorphism. Hence $M_S \sim_\Q 0$.
		
		In contrast, if $(a, a_0, a_1, a_\infty) = (\frac{1}{2}, \frac{1}{2}, \frac{1}{2}, \frac{1}{2})$ then $M_S\nsim_\Q 0$ by \cref{th: Ambro_isotriviality}. Let us show this directly. Suppose $\pi: (\Z, 0)\to (\X, \B)$ is an index-1 cover. For every point $s\in S(\Ko)$ the induced cover $(\Z_s, 0) \to (\X_s, \B_s)$ (locally) given by the equation $y^2 = x(x-1)(x-s)$, where $x$ is a local coordinate on $\Pp^1_\Ko\times s$. Hence, $(\Z, 0) \to S$ is a projective family of elliptic curves with non-constant $j$-invariant. Hence $M_S\nsim_\Q 0$.
	\end{example}

	\section{Main results}
	\subsection{K-trivial case}\label{subsec: CY}
	Let $(\X/S, \B)$ be an elementary family, and $(X,B)$ be a pair. Suppose $s\in S$ is a point, $\kappa(s)$ is its residue field. By $(\X_{\overline{s}}, \B_{\overline{s}})$ we denote the geometric fiber over $s$, that is, $(\X_s, \B_s)\times_{\kappa(s)} \overline{\kappa(s)}$. We write $(\X_{\overline{s}}, \B_{\overline{s}}) \overset{\textnormal{small}}{\longdashleftrightarrow } (X, B)\times_\Ko \overline{\kappa(s)}$ assuming that $(\X_{\overline{s}}, \B_{\overline{s}})$ is a log pair $0$-equivalent to $(X, B)\times_\Ko \overline{\kappa(s)}$, and the birational map $\X_{\overline{s}} \longdashleftrightarrow X\times_\Ko \overline{\kappa(s)}$ is small.
	\begin{equation*}
		\begin{gathered}
			\text{Iso}_{(X, B)}(\X/S, \B) =  \{s\in S: (\X_{\overline{s}}, \B_{\overline{s}})\approx (X, B)\times_\Ko \overline{\kappa(s)}\},\\
			\text{Iso}^1_{(X, B)}(\X/S, \B) =  \{s\in S: (\X_{\overline{s}}, \B_{\overline{s}}) \overset{\textnormal{small}}{\longdashleftrightarrow } (X, B)\times_\Ko \overline{\kappa(s)}\}.
		\end{gathered}
	\end{equation*}
	
	\begin{definition}
		We say that a morphism $\tau: U\to S$ between regular varieties is an \textit{\'etale base change} if the image $\tau(U)$ is an open subset of $S$, and $\tau: U\to \tau(U)$ is an \'etale covering, that is, an \'etale, finite, surjective morphism. For an elementary family $(\X/S, \B)$ with $\B = \sum b_i \B_i$ we set $(\X, \B)\times_S U = (\X\times_S U, \sum b_i (\B_i\times_S U))$. It is clear that $(\X, \B)\times_S U$ is an elementary family over~$U$.
	\end{definition}
	
	\begin{theorem}
		\label{th: constructible}
		Let $(X, B)$ be a projective klt $0$-pair, $B$ an effective $\Q$-divisor, and $(\X/S, \B)$ a projective elementary family. Then the subsets $\Iso_{(X, B)}(\X/S, B)\subseteq S$ and $\Iso_{(X, B)}^1(\X/S, \B)\subseteq S$ are constructible.
	\end{theorem}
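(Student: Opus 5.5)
We plan to generalize the proof of \cref{th: particular_case}, using Noetherian induction on $S$. It suffices to show the following. If $\Iso^1_{(X,B)}(\X/S,\B)$ (resp. $\Iso_{(X,B)}(\X/S,\B)$) is dense in an irreducible $S$, then it contains a dense open subset of $S$. We then apply the same statement to the closed complement and to each irreducible component of any stratification. Since $\Iso\subseteq\Iso^1$, we first assume $\Iso^1$ is dense. We also freely replace $S$ by a regular open subset or by an étale base change, which does not affect constructibility of the image sets.

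\emph{Reduction to a klt lc-trivial fibration.} Every point of $\Iso^1$ gives a klt $0$-pair; in fact it is $\epsilon$-klt for a fixed $\epsilon$, since discrepancies are a crepant-birational invariant. So \cref{lemma: klt_0_families}(1),(3) let us shrink $S$ so that $(\X,\B)$ is klt and $(K_\X+\B)|_{\X_s}\equiv 0$ for all $s$. Arguing as in the proof of (3), using \cite[Theorem 4.2]{Amb05} and the constructibility of $h^0$, we obtain $K_\X+\B\sim_{\Q,S}0$ after shrinking. Hence $f$ is a klt lc-trivial fibration. After shrinking further, the boundary divisor $B_S$ is $0$, because it is supported on finitely many divisors. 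We then apply \cref{const: main}: take a log resolution $(\Y,\B_\Y)$ that is log smooth over $S$, an index-1 cover $\Z$, and a $\mu_m$-equivariant resolution $\widetilde\Z$. This produces a polarized CY $\Zi$-VHS $\V$ with Hodge bundle $\Ho_S=(f_\Y)_*\Os_\Y(\N_\Y)$ and CY period map $\Phi^\CY$.

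\emph{Constancy of periods.} For $s\in\Iso^1$, the pair $(\X_s,\B_s)$ is $0$-equivalent to $(X,B)$. By \cref{lemma: bir_ivariant}, the line $\gr^{d}_F$ of the $\chi$-isotypic part, which is $H^0(\Y_s,\Os(\N_{\Y_s}))\subset H^0(\widetilde\Z_s,\omega)$, is identified with the corresponding line for a fixed resolution of $(X,B)$. This identification is compatible with the integral structure on the $\chi$-part of $H^d$ coming from the birational morphisms $\alpha_i$, up to the ambiguity in the choice of lattice and polarization. Using \cite[Lemma 2.22]{BFMT25} (finitely many polarizations up to isomorphism), we conclude that $\Phi^\CY(\Iso^1)$ is a finite set. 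Since $\Phi^\CY$ has algebraic image \cite{BBT23} and $\Iso^1$ is Zariski dense, $\Phi^\CY$ is constant. Then \cref{lemma: hodge-trivial} gives $\Ho_S\sim_\Q0$, hence $M_S\sim_\Q0$ by \cite[Theorem 6.28]{BFMT25}. Now \cref{th: Ambro_isotriviality} yields a finite cover $S'\to S$, étale in codimension one, and a dense open $U\subseteq S'$ over which $(\X,\B)\times_S U\approx(X_0,B_0)\times U$. Its image in $S$ contains a dense open subset $V$ on which all geometric fibers are log isomorphic to a single pair $(X_0,B_0)$.

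\emph{Conclusion.} Because $\Iso^1$ is dense, it meets $V$. So $(X_0,B_0)$ is small $0$-equivalent to $(X,B)$, and $V\subseteq\Iso^1$. For $\Iso$, we assume $\Iso$ is dense and run the same argument. Then $V$ meets $\Iso$, so $(X_0,B_0)\approx(X,B)$ and $V\subseteq \Iso$. Noetherian induction finishes the proof.

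The main obstacle is the constancy step. One must check that the Hodge lines of all fibers in $\Iso^1$ define finitely many points in $\Gamma^a\backslash\D^\CY$. This requires an isomorphism of the integral $\chi$-isotypic Hodge structures, or at least of the rational structures together with a compatible lattice, induced by a crepant birational map. For this I would combine \cref{lemma: bir_ivariant} with \cref{th: invariance_of_deep} to control the deepest graded piece. I would then argue that the remaining data lie among finitely many polarized structures, via \cite[Lemma 2.22]{BFMT25}, rather than trying to prove a full integral isomorphism.
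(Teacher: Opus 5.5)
Your outline follows the paper's proof essentially step for step. It uses Noetherian induction on closures of $\Iso^1$, and \cref{lemma: klt_0_families} to make $(\X,\B)$ a klt lc-trivial fibration. It then applies \cref{const: main} and gets constancy of $\Phi^{\CY}$ on $\Iso^1$ from \cref{lemma: bir_ivariant} and \cite{BBT23}. Finally it invokes \cref{lemma: hodge-trivial}, \cite[Theorem 6.28]{BFMT25} and \cref{th: Ambro_isotriviality}.

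The one structural difference is how you get $B_S=0$. The paper proves it over the whole base. For a prime divisor $P\subset S$, $f^*P$ is a prime Cartier divisor, and inversion of adjunction plus klt-ness of the induced family over $\overline{P}$ gives $\mathrm{lct}\ge 1$. You instead remove the finitely many divisors where $B_S\neq 0$. The paper also shrinks $S$ before applying Ambro's theorem, so your shortcut is harmless and simpler. Your last paragraph compares the isotrivial fibre $(X_0,B_0)$ with $(X,B)$ separately for $\Iso^1$ and $\Iso$. That is more careful than the paper's phrasing $(\X,\B)\times_S U\approx(X,B)\times U$.

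The step that is not yet a proof is the one you flag yourself. \cite[Lemma 2.22]{BFMT25} gives finiteness of polarizations on one fixed integral Hodge structure. \cref{lemma: bir_ivariant}, however, only supplies injections $\alpha_i^*$ of rational Hodge structures and an identification of the $\mathrm{gr}^d_F$-lines. So the ``ambiguity in the choice of lattice'' is exactly what remains uncontrolled. A rational identification alone does not bound the number of $\Gamma^a$-orbits; isogenous K3 surfaces are an example. The paper gives no separate argument here either. It deduces $\Phi^{\CY}(\Iso^1)=\{\star\}$ directly from \cref{lemma: bir_ivariant}, i.e.\ from identifying the fibres $H^0(\Y_s,\Os_{\Y_s}(\N_{\Y_s}))$ of $\Ho_S$.

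One way to close the gap is as follows.
\begin{enumerate}
\item Take $\mu_m$-equivariant smooth models $p\colon W\to\widetilde{\Z}_s$. Then $p_*p^*=\mathrm{id}$, so $p^*$ is split injective on $H^d(-,\Zi)$ modulo torsion and preserves the cup product.
\item Let $T_s$ be the saturated lattice of the smallest rational sub-Hodge structure whose complexification contains $\Ho_s$. By the previous point, $p^*$ maps $T_s$ isometrically onto the corresponding lattice for $W$, matching Hodge lines.
\item Each $T_s$ lies in the primitive part, where $Q$ is $\pm$ the cup product. Hence all $T_s$ with $s\in\Iso^1$ are Hodge-isometric to one nondegenerate lattice $T$.
\item There are only finitely many $\Gamma^a$-orbits of primitive sublattices of $V_\Zi$ isometric to $T$. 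Isometries of $T$ acting trivially on its discriminant group extend to $V_\Zi$, and these form a finite-index subgroup of $O(T)$. Together these show that $\Phi^{\CY}(\Iso^1)$ is finite.
\item Finally, as the paper does, use that the fibres of $\Phi^{\CY}$ are closed algebraic subsets, not merely that its image is algebraic. Zariski density then gives constancy.
\end{enumerate}
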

	\begin{proof}
		Without loss of generality, we can assume that ${\text{Iso}^1_{(X, B)}(\X/S, \B)}$ is dense in $S$. Let $I$ be an index set such that $\{s_\al: \al\in I\} = \Iso_{(X, B)}^1(\X/S, \B)\cap S(\Ko)$. Let $\mathcal{S}$ be the set of all closed subsets $S^\prime \subseteq S$ such that $\{s_\alpha\}_{\alpha\in I}\cap S^\prime$ is dense in $S^\prime$. Note that $\mathcal{S}$ is a well-founded set with respect to inclusion as $S$ is Noetherian. Then the set of all minimal elements for $\mathcal{S}$ is $\{\{s_\alpha\}: \al \in I\}$. We say that a property $P(S^\prime)$ holds for $S^\prime \in \mathcal{S}$ if the set $\Iso_{(X, B)}^1(\X/S, \B)\cap S^\prime$ is constructible. We shall proceed by Noetherian induction on $\mathcal{S}$. Clearly, $P(\{s_\al\})$ holds for all $\al\in I$ (Noetherian induction base). Let $S^\prime\in \mathcal{S}$ be a non-minimal element of $\mathcal{S}$. Assume that $P(S^{\prime\prime})$ is true for all proper subsets $S^{\prime\prime}\subset S^\prime$ such that $S^{\prime\prime}\in \mathcal{S}$ (Noetherian induction step). We claim that $P(S^\prime)$ holds. To prove the claim, it suffices to show that there is an \'etale base change $U\to S^\prime$ such that $(\X, \B)\times_S U \approx (X, B)\times_\Ko U$ over $U$. Then the statement follows from Noetherian induction (cf. \cite[Chapter II, Exercise 3.16]{Har77}).
		
		 It is readily seen that the problem can be reduced to the case when $((\X_{S^\prime})/S^\prime, \B_{S^\prime})$ is a projective elementary family by Noetherian induction step. To simplify the notation, we write $(\X/S, \B)$ for the corresponding new family. By \cref{lemma: klt_0_families}, we may assume that $(\X, \B)$ is a klt log pair, and $\text{klt}_\epsilon(\X/S, \B) = \nu_0(\X/S, \B) = S$ for some $\epsilon > 0$. Then $f: (\X, \B)\to S$ is a lc-trivial fibration satisfying the assumptions of \cref{const: main}.
		
		First, we prove that the boundary divisor $B_S$ is zero. Recall that all scheme-theoretic fibers of $\X/S$ are integral, and $S$ is regular (see \cref{def: elementary_family}). Hence, for every prime divisor $P$ on $S$, the pullback $D = f^*(P)$ is a prime divisor, and it is Cartier. Since every prime component of $\B$ is horizontal over $S$, we obtain the inequality $\text{lct}_{\eta_P}(\X, \B; D) \le 1$. We claim that $\text{lct}_{\eta_P} (\X, \B; D) \ge 1$, that is, the pair $(\X, \B + D)$ is lc over the generic point $\eta_P$ of $P$. Indeed, by the inversion of adjunction \cite[Theorem 4.9]{Kol13}, we have that $(\X, \B + D)$ is lc near $D$ if and only if $(\overline{D}, \text{Diff}_{\overline{D}}\B)$ is lc, where $\overline{D}\to D$ is a normalization. Note that the induced fibration $\overline{D}\to P$ must factor through a normalization $\overline{P}\to P$. According to \cref{lemma: klt_0_families}, the induced lc-trivial fibration $(\overline{D}, \text{Diff}_{\overline{D}}\B)\to \overline{P}$ is klt over the generic point $\eta_P$. This implies $\text{lct}_{\eta_P} (\X, \B; D) \ge 1$, as claimed. Hence $B_S = 0$.
		
		Second, we show that $M_S\sim_\Q 0$. Then the statement will follow from \cref{th: Ambro_isotriviality}. Let $g: (\Y, \B_\Y) \to (\X, \B_\X)$ be a log resolution. By the generic smoothness, we can assume that this log resolution satisfies the assumptions of \cref{const: main} after shrinking the base. Therefore, there is a polarized CY $\Zi$-VHS $\V$ whose deepest non-zero piece of the Hodge filtration is the line bundle $\Ho_S = (f_{\Y})_*\Os_{\Y}(\N_\Y)$, where $\N_\Y$ is the negative part of~$\B_\Y$. Set $\Phi^\CY: S^\an \to \Gamma^a\backslash \D^\CY$ to be a CY period map associated with $\V$. The fiber of $\Ho_S$ over a point $s\in S(\Ko)$ equals to $H^0(\Y_s , \Os_{\Y_s}((\N_\Y)_s) = H^0(\Y_s , \Os_{\Y_s}(\N_{\Y_s})$ where $\N_{\Y_s}$ is the negative part of $(\B_\Y)_s$. This implies that $$\Phi^\CY\left(\Iso^1_{(X, B)}(\X/S, \B)\right)  = \{\star\}$$ according to \cref{lemma: bir_ivariant}. The subset $(\Phi^\CY)^{-1}(\star)$ is a closed algebraic subset of $S^\an$ by \cite[Theorem 1.1]{BBT23}, and dense by assumption. Hence, $(\Phi^\CY)^{-1}(\star) = S^\an$. By \cref{lemma: hodge-trivial}, we have $\Ho_S\sim_\Q 0$. The canonical bundle formula \cite[Theorem 6.28]{BFMT25} tells us that $\Ho_S\sim_\Q M_S$. Therefore, \cref{th: Ambro_isotriviality} implies that there exists a \'etale base change $U\to S^\prime$ such that $(\X, \B)\times_S U \approx (X, B)\times_\Ko U$ over $U$. This concludes the proof in the case when $B$ is a $\Q$-divisor.
	\end{proof}
	\begin{remark}
		 Assuming the termination of flips and the abundance conjecture one may extend \cref{th: constructible} to the case when $B$ is an $\R$-divisor by applying Shokurov's polytope argument. It is unknown to us if these conjectures could be avoided.
	\end{remark}

	\subsection{General case}
	\label{sec: flag_schemes}
	In this subsection, let us remember that all schemes are assumed to be locally Noetherian and separated over an algebraically closed field $k$ of an arbitrary characteristic.
	\begin{definition}
		A \textit{flag scheme} $\underline{Z} = (Z_0; Z_{1}, Z_2 \dots , Z_n)$ consists of a proper scheme $Z_0$ and finite sequence of closed subschemes $Z_i\subset Z_0$ such that $Z_0 \supset Z_{1} \supset Z_2 \supset \dots \supset Z_n$. The integer $l(\underline{Z}) = n \in \mathbb{Z}_{\ge 0}$ is called \textit{the length of} $\underline{Z}$. The scheme $Z_0$, denoted by $Z$, is called \textit{the ambient space of the flag scheme} $\underline{Z}$. A \textit{polarized flag scheme} $(\underline{Z}, A)$ consists of a flag scheme $\underline{Z}$ and an ample invertible sheaf $A$ on $Z$.
	\end{definition}
	Let us recall the notation of a scheme-theoretic image. Let $\varphi: Z\to Z^\prime$ be a proper morphism between Noetherian schemes. Then \textit{the scheme-theoretic image} $\text{Im}_\varphi(V)$ under $\varphi$ of a closed subscheme $\iota: V\subseteq Z$ is defined by the (coherent) sheaf of ideals $\mathcal{I} = \text{Ker}(\Os_{Z^\prime} \to (\varphi\circ\iota)_*\Os_V)$. If $V$ is reduced, then $\text{Im}_\varphi(V)$ is the set-theoretic image $\varphi(V)$ with the structure of a reduced closed subscheme of~$Z^\prime$.
	\begin{definition}
		We say two flag schemes $\underline{Z}$ and $\underline{Z^\prime}$ are \textit{isomorphic} if $l(\underline{Z}) = l(\underline{Z^\prime})$, and there exists an isomorphism $\varphi: Z\to Z^\prime$ such that $\text{Im}_\varphi(Z_i) = Z^\prime_i$ for each index $i = 0, 1, \dots, l(\underline{Z})$. Similarly, polarized flag schemes $(\underline{Z}, A)$ and $(\underline{Z^\prime}, A^\prime)$ are said to be isomorphic if there is an isomorphism $\varphi: \underline{Z}\to \underline{Z^\prime}$ such that $A \approx \varphi^*A^\prime$, the latter is an isomorphism of invertible sheaves.
	\end{definition}
	
	As in the case of varieties, we define the notation of family of flag schemes. Suppose $\underline{\Z}$ is a flag scheme, $f: {\Z} \to S$ is a proper flat morphism between Noetherian schemes. In addition, suppose that the restriction $f|_{\Z_i}: \Z_i \to S$ is flat as well. Then the morphism $f$ is called \textit{a family of flag schemes} and is denoted by $f: \underline{\Z} \to S$. If  the morphism $f$ is projective, then we shall say that $f$ is \textit{projective family of flag schemes}. When there is no ambiguity, we denote $f$ by $\underline{\Z}/S$. As usual, the fiber of $\underline{\Z}/S$ over a point $s\in S$ is the flag scheme $((\Z_0)_s, (\Z_1)_s, \dots, (\Z_{l(\underline{\Z})})_s)$, and it is denoted by $\underline{\Z}_s$. As in \cref{subsec: CY} we put:
	\begin{equation*}
		\Iso_{\underline{Z}}(\underline{\Z}/S):= \{s\in S: \underline{\Z}_{\overline{s}} \approx \underline{Z}\times_k\overline{\kappa(s)}\}.
	\end{equation*}
	
	\begin{definition}
		\label{def: bounded_tuples}
		Let $\mathfrak{F}$ be class of polarized flag schemes $(\underline{Z_\al}, A_\al)$. We say that the class $\mathfrak{F}$ is bounded if there exists a projective family of flag schemes $\underline{Z}/S$ with an ample invertible sheaf $\A$ on $\Z/S$ satisfying the following:
		\begin{enumerate}
			\item 
				For every $(\underline{Z_\al}, A_\al)$ in $\mathfrak{F}$ there is a closed point $s_\al\in S$ such that $(\underline{Z_\al}, A_\al) \approx (\underline{\Z}_{s_\al}, \A_{s_\al})$.
			\item 
				For every closed point $s\in S$ there exists $(\underline{Z_\al}, A_\al)$ in $\mathfrak{F}$ such that $(\underline{\Z}_{s}, \A_s)\approx (\underline{Z_\al}, A_\al)$.
		\end{enumerate}
	\end{definition}

	\begin{lemma}
		\label{lemma: immersion}
		Let $f:\Z\to S$ be a projective morphism, where $S=\Sp R$ is a Noetherian scheme. Let $\A$ be a very ample invertible sheaf on $\Z/S$ such that
		\begin{itemize}
			\item The natural map $H^0(\Z,\A)\otimes \kappa(s)\to H^0(\Z_s,\A_s)$ is an isomorphism for all $s\in S$.
			\item The direct image $f_*\A$ is a free sheaf of rank $N+1$.
		\end{itemize}
		Let $s_0,\dots,s_N$ be a basis of the free $R$-module $H^0(S,f_*\A)$, and
		$$\iota: \Z \hookrightarrow \Pp^N_S,\qquad x \longmapsto (s_0(x):\cdots:s_N(x))$$ be the corresponding closed immersion (over $S$). Then the ideal sheaf $\mathcal{I} = \textnormal{Ker}\left(\Os_{\Pp^N_S} \to \iota_*\Os_{\Z}\right)$ on $\Pp^N_S$ satisfies
		$
		H^0(\Pp^N_{\kappa(s)},\mathcal{I}_s(1))=H^1(\Pp^N_{\kappa(s)},\mathcal{I}_s(1))=0
		$
		for all points $s\in S$.
	\end{lemma}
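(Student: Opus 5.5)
Fix a point $s\in S$ and write $k=\kappa(s)$. The plan is to compare global sections on $\Pp^N_S$ with sections on the fibre, using the short exact sequence
\[
0\to \mathcal{I}_s(1)\to \Os_{\Pp^N_{k}}(1)\to \Os_{\Z_s}(1)=\A_s\to 0 .
\]
This sequence is available on the fibre because $\Z$ is flat over $S$ along the relevant locus. If flatness is not assumed, we instead use the right-exact sequence $\mathcal{I}\otimes k\to \Os_{\Pp^N_k}\to \Os_{\Z_s}\to 0$ and interpret $\mathcal{I}_s$ as the ideal of $\Z_s$ in $\Pp^N_k$. We then take the long exact cohomology sequence:
\[
0\to H^0(\mathcal{I}_s(1))\to H^0(\Pp^N_k,\Os(1))\xrightarrow{\ r\ } H^0(\Z_s,\A_s)\to H^1(\mathcal{I}_s(1))\to H^1(\Pp^N_k,\Os(1))=0 .
\]
The vanishing $H^1(\Pp^N_k,\Os(1))=0$ is the standard cohomology of projective space. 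It therefore suffices to show that the restriction map $r$ is an isomorphism of $k$-vector spaces: injectivity gives $H^0(\mathcal{I}_s(1))=0$, and surjectivity gives $H^1(\mathcal{I}_s(1))=0$.

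To analyse $r$, note that $H^0(\Pp^N_k,\Os(1))$ has basis the coordinates $x_0,\dots,x_N$. The map $r$ sends $x_j$ to $\iota_s^*x_j=s_j|_{\Z_s}$, since $\iota^*\Os(1)=\A$ by construction of $\iota$. Hence $r$ factors as
\[
k^{N+1}\cong H^0(S,f_*\A)\otimes_R k\longrightarrow H^0(\Z_s,\A_s),
\]
where $x_j$ corresponds to $s_j\otimes 1$. This is exactly the base change map $H^0(\Z,\A)\otimes\kappa(s)\to H^0(\Z_s,\A_s)$, which is an isomorphism by the first hypothesis. Here $H^0(\Z,\A)=H^0(S,f_*\A)$ is free with basis $s_0,\dots,s_N$ by the second hypothesis, so $H^0(\Z,\A)\otimes\kappa(s)$ is $(N+1)$-dimensional with basis the images of the $s_j$. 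Therefore $r$ carries a basis to a basis and is an isomorphism, and both vanishings follow.

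The step I expect to need care is identifying the ideal sheaf of the fibre: checking that $\mathcal{I}_s$, the restriction of $\mathcal{I}$ to $\Pp^N_k$, really is the ideal of $\Z_s\subset\Pp^N_k$, so that the first displayed sequence is exact on the left. If $f$ is flat, this follows by tensoring $0\to\mathcal{I}\to\Os_{\Pp^N_S}\to\iota_*\Os_\Z\to0$ with $\kappa(s)$, since $\operatorname{Tor}_1(\iota_*\Os_\Z,\kappa(s))=0$. In general I would simply define $\mathcal{I}_s$ as the kernel of $\Os_{\Pp^N_k}\to\Os_{\Z_s}$, which is what the statement requires; the rest of the argument is then formal. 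The identification $\iota^*\Os(1)\cong\A$ holds by construction, because $\A$ is very ample and generated by the $s_j$.
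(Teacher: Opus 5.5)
Your proof is correct and follows essentially the same route as the paper. Both arguments identify the restriction map $H^0(\Pp^N_{\kappa(s)},\Os(1))\to H^0(\Z_s,\A_s)$ with the base-change isomorphism $H^0(\Z,\A)\otimes\kappa(s)\to H^0(\Z_s,\A_s)$ via the basis $s_0,\dots,s_N$, and then read off both vanishings from the long exact sequence of $0\to\mathcal{I}_s(1)\to\Os(1)\to\iota_{s*}\Os_{\Z_s}(1)\to 0$ together with $H^1(\Pp^N_{\kappa(s)},\Os(1))=0$; working directly with the linear forms $\kappa(s)^{N+1}$, rather than the paper's $H^0(\Z_s,\Os_{\Z_s}^{\oplus N+1})$, and flagging the flatness needed to identify $\mathcal{I}_s$ with the ideal sheaf of $\Z_s$ are, if anything, small gains in precision.
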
 
	\begin{proof}
		Recall that the given closed immersion (over $S$) $\Z\hookrightarrow \Pp^N_S$ corresponds to a surjective morphism $\mathcal{O}_\Z^{\oplus N+1}\to \A$, which is given by
		$
		(f_0,f_1,\dots,f_N)\longmapsto \sum_{i=0}^N f_i\cdot (s_i|_U)
		$
		on each non-empty open subset $U\subseteq \Z$. In particular, fiber-wise the immersions $\iota_s: \Z_s\hookrightarrow \Pp^N_{\kappa(s)}$ correspond to the induced surjection $\mathcal{O}_{\Z_s}^{\oplus N+1}\to \A_s$. We claim that the induced map
		$
		H^0(\Z_s,\mathcal{O}_{\Z_s}^{\oplus N+1})\longrightarrow H^0(\Z_s,\A_s)
		$
		is an isomorphism. Consider the commutative diagram
		$$
		\xymatrix{
			{H^0(\Z,\mathcal{O}_\Z^{\oplus N+1})\otimes\kappa(s)} \ar@{->}[r] \ar@{->}[d]& {H^0(\Z,\A)\otimes\kappa(s)} \ar@{->}[d]\\
			{H^0(\Z_s,\mathcal{O}_{\Z_s}^{\oplus N+1})} \ar@{->}[r] & {H^0(\Z_s,\A_s).}
		}
		$$
		
		By the hypothesis, $H^0(\Z,\A)$ is a free $R$-module of rank $N+1$, hence the top horizontal arrow is an isomorphism. By construction, the left vertical arrow is an isomorphism. The right vertical arrow is an isomorphism by hypothesis. Therefore the bottom horizontal arrow is also an isomorphism, as claimed.
		
		For every point $s\in S$ the closed immersion $\iota: \Z\hookrightarrow \Pp^N_S$ induces a closed immersion $\iota_s:\Z_s\hookrightarrow \Pp^N_{\kappa(s)}$. By the claim, the restriction map
		\[
		H^0(\Pp^N_{\kappa(s)},\mathcal{O}_{\Pp^N_{\kappa(s)}}(1))\cong H^0(\Z_s,\mathcal{O}_{\Z_s}^{\oplus N+1})
		\longrightarrow
		H^0(\Z_s,\A_s)\cong H^0(\Pp^N_{\kappa(s)}, \iota_{s*}\mathcal{O}_{\Z_s}(1))
		\]
		is an isomorphism. Hence, from the short exact sequence of sheaves on $\Pp^N_{\kappa(s)}$
		\[
		0\to \mathcal{I}_s(1)\to \mathcal{O}_{\Pp^N_{\kappa(s)}}(1)\to \iota_{s*}\mathcal{O}_{\Z_s}(1)\to 0
		\]
		we obtain $H^0(\Pp^N_{\kappa(s)},\mathcal{I}_s(1))=H^1(\Pp^N_{\kappa(s)},\mathcal{I}_s(1))=0$.
	\end{proof}
	\begin{remark}
		If $\Z/S$ is a projective family of varieties, the conditions $$H^0(\Pp^N_{\kappa(s)},\mathcal{I}_s(1))=H^1(\Pp^N_{\kappa(s)},\mathcal{I}_s(1))=0$$ are equivalent to saying that the closed immersion (over $S$) $\iota_s: \Z\hookrightarrow \Pp^N_S$ is fiber-wise non-degenerate and given by the complete linear systems $|\A_s|$.
	\end{remark}
	
		 We say that a family of flag schemes $\underline{\Z}/S$ is \textit{\'etale locally trivial} if for every point $s_0\in S(k)$ there an \'etale neighborhood $(U, s_0)$ and a flag scheme $\underline{Z}$ such that $\underline{\Z}\times_S U \approx \underline{Z}\times_k U$ over~$U$.

	\begin{lemma}
		\label{lemma: glueing_isomorphisms}
		Let  $\underline{Z}$ be a flag scheme such that the ambient space $Z$ is a projective scheme,  $\underline{\Z}/S$ a projective family of flag schemes. Suppose that $\Iso_{\underline{Z}}(\underline{\Z}/S)(k) = S(k)$. Then $\Z_i/S$ is \'etale locally trivial for each $i = 0, 1, \dots, l(\underline{Z})$. Moreover, the family $\underline{\Z}/S$ is \'etale locally trivial if $\text{char}\, k = 0$.
	\end{lemma}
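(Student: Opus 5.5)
The plan is to realize the locus of isomorphisms as a scheme and show it is smooth (or at least flat) and surjective over $S$, so that it has sections étale locally. For each $i$, consider the functor on $S$-schemes $T\mapsto \{\text{isomorphisms } (Z_i)_T \to (\Z_i)_T \text{ over } T\}$. Since $Z_i$ and $\Z_i/S$ are projective and $\Z_i/S$ is flat, this is represented by an open subscheme $\mathrm{Isom}_S(Z_i\times_k S, \Z_i)$ of the relative Hilbert scheme of graphs in $Z_i\times_k \Z_i$ over $S$. This scheme $I_i\to S$ is locally of finite type, and it is a right torsor under the group scheme $\Aut(Z_i)\times_k S$ acting by precomposition. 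The hypothesis $\Iso_{\underline{Z}}(\underline{\Z}/S)(k)=S(k)$ says that every closed fiber of $I_i$ is non-empty. The fibers over non-closed points are then non-empty as well, by a Chevalley/constructibility argument on finite-type pieces of $I_i$, using that $S$ is Jacobson and of finite type over $k$. Hence $I_i\to S$ is a pseudo-torsor with non-empty fibers.

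Next I show that $I_i\to S$ is fppf-locally trivial. For a torsor, it suffices to have fppf-local sections, and these exist once we know $I_i\to S$ is flat and surjective of finite presentation on some quasi-compact piece. I would first replace $S$ by its reduction and stratify so that $I_i$ is flat over each stratum, by generic flatness. On a stratum, choosing a component of $I_i$ dominating it gives, after a quasi-finite flat base change, a section. This trivializes $(\Z_i)_U\cong Z_i\times U$. To get the étale statement rather than an fppf one, I will exploit that $S$ is of finite type over an algebraically closed field, so reduced $I_i$ is generically smooth over the stratum. Thus, after shrinking, $I_i\to S$ has an étale-local section through every $k$-point: take a smooth point of the fiber and cut by a transversal slice, using EGA IV 17.16.3. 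Noetherian induction on $S$ then covers all closed points $s_0$. The apparent gap that \'etale local triviality is a neighborhood statement and not a stratum statement is handled as follows: the torsor structure gives $I_i\times_S I_i\cong I_i\times \Aut(Z_i)$, so smoothness of $I_i\to S$ at one point propagates along the fiber. I expect to need flatness of $I_i$ over all of $S$, not just strata, and this is the main obstacle. For a single $Z_i$ I would get it from the torsor structure: a pseudo-torsor under a group scheme that becomes trivial over the fppf-cover $I_i\to S$ is faithfully flat as soon as it is flat over an fpqc-cover. I would argue fiberwise dimension constancy, $\dim (I_i)_s=\dim\Aut(Z_i)$, together with reducedness to invoke miracle flatness on a smooth locus.

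For the flag statement I run the same argument with $I=\mathrm{Isom}_S(\underline{Z}\times S,\underline{\Z})$. This is the closed subscheme of $I_0$ where the isomorphism carries each $Z_i\times S$ into $\Z_i$, a closed condition by flatness of $\Z_i/S$, and it is a torsor under the stabilizer group scheme $\Aut(\underline{Z})\subseteq\Aut(Z_0)$. The difference in characteristic $0$ is Cartier's theorem: $\Aut(\underline{Z})$ is then smooth, so an fppf torsor under it is smooth over $S$, and smooth surjective morphisms have étale-local sections. This gives étale local triviality of $\underline{\Z}/S$. For individual $Z_i$ in arbitrary characteristic I expect to avoid smoothness of the group by the generic-smoothness-of-the-reduced-torsor trick above. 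This is where the characteristic-free claim is most delicate, and I would try to handle it by working with $(I_i)_{\mathrm{red}}$, which is a torsor under $\Aut(Z_i)_{\mathrm{red}}$, smooth over the perfect field $k$.
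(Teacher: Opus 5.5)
\paragraph{The main gap: flatness of the Isom scheme near every closed point.} You reduce \'etale local triviality at a closed point $s_0$ to flatness of the Isom scheme $I_i\to S$ (resp.\ $I\to S$) on a neighbourhood of $s_0$. You correctly flag this as the main obstacle, but none of your arguments proves it.

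Generic flatness, generic smoothness and stratification only give flatness over a dense open subset or over strata. The claim that a pseudo-torsor trivial over the fppf cover $I_i\to S$ is faithfully flat presupposes that $I_i\to S$ is already flat. Miracle flatness needs a regular base and a Cohen--Macaulay source, and again only works on an open set. The isomorphism $I_i\times_S I_i\cong I_i\times\Aut(Z_i)$ propagates smoothness only within a fibre where it is already known at one point, so it says nothing about the fibre over $s_0$ when $s_0$ lies in a smaller stratum.

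The torsor formalism cannot close this gap by itself: for the trivial group, $(S\smallsetminus\{p\})\sqcup\{p\}\to S$ is a surjective pseudo-torsor over a reduced base that is not flat. What is missing is a genuinely deformation-theoretic statement at $s_0$: local isotriviality of a flat projective family whose closed fibres are all isomorphic (a Fischer--Grauert type rigidity). The paper does not reprove this; it imports it, component by component, from \cite[Proposition 2.6.10]{Ser06}. Only afterwards does it use the representable flag Isom scheme, its surjectivity and generic smoothness in characteristic $0$ to make the trivializations compatible with the flag.

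Two smaller points. First, $I_i$ is only locally of finite type, with possibly infinitely many finite-type pieces (one for each Hilbert polynomial of a graph). So Chevalley plus the Jacobson property does not give non-empty fibres over non-closed points unless, say, $k$ is uncountable. Second, you may not replace $S$ by $S_{\mathrm{red}}$, since that changes the family. A non-trivial first-order deformation of $Z$ over $\Sp k[\epsilon]/(\epsilon^2)$ satisfies the hypothesis and is not \'etale locally trivial, so reducedness of $S$ must be assumed (as it tacitly is in the paper's applications).

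\paragraph{The characteristic-free part.} This is where your plan genuinely breaks. $(I_i)_{\mathrm{red}}$ is not in general a torsor under $\Aut(Z_i)_{\mathrm{red}}$, because reduction does not commute with $\times_S$, and generic smoothness fails in characteristic $p$.

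Moreover, no argument can succeed without an extra hypothesis. Suppose $G=\Aut(Z)$ has a non-trivial infinitesimal identity component, e.g.\ a surface of general type in characteristic $p$ with a non-zero global vector field. Then $G^0$ contains $H=\mu_p$ or $\alpha_p$. Twisting $Z\times S$ by the $H$-torsor $T=\{x^p=t\}$ over $S=\mathbb{G}_m$ (resp.\ $\mathbb{A}^1$) yields a flat projective family whose closed fibres are all isomorphic to $Z$. A trivialization over a connected \'etale neighbourhood $U$ would be a section of $T\times^H G$ over $U$. Since $U$ is reduced and $G/H$ is a disjoint union of one-point schemes, the corresponding reduction of structure group is constant. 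Hence $T_U$ would be trivial and $t$ a $p$-th power on $U$, contradicting $dt\neq 0$ in $\Omega_U$.

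So in positive characteristic you need a hypothesis such as smoothness of $\Aut(Z_i)$. In characteristic $0$ your Cartier argument for the flag is fine, but only once flatness of $I\to S$ near every closed point has been supplied by the input described above.
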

	\begin{proof}
		The first part of the lemma is a known result from deformation theory. In other words,
		every point $s_0\in S(k)$ has an \'etale neighborhood $(U, s_0)$ such that ${\Z_i}\times_S U \cong {Z_i}\times_k U$ over $U$ for each $i\in\{0, 1, \dots, l(\underline{Z})\}$ according to \cite[Proposition 2.6.10]{Ser06}.
		
		Next, we use the assumption $\text{char}\, k = 0$ to glue isomorphisms. For every (locally Noetherian) scheme $T$ over $S$ assign the set $${\mathcal{I}so}_S(T) = \{\varphi: \underline{Z}\times_k T \to \underline{\Z}\times_S T| \  \varphi \text{ is an isomorphism over } T \}.$$ The association $T\mapsto {\mathcal{I}so}(T)$ defies a contravariant functor from the category of locally Noetherian schemes over $S$ to the category of sets. It follows in the standard way (see \cite[Chapter 5, Theorem 5.22]{FGA05}) that the functor ${\mathcal{I}so}_S$ is representable by an open subscheme $\mathbf{Iso}_S/S$ of the flag Hilbert scheme $\mathbf{Hilb}_{(Z\times_k S)\times_S\Z/S}/S$ (cf. \cite[Section 4.5]{Ser06}). Note that the structure morphism $I: \mathbf{Iso}_S \to S$ is surjective by the hypothesis $\Iso_{\underline{Z}}(\underline{\Z}/S)(k) = S(k)$. It follows from the generic smoothness on the source (in characteristic zero!) that the morphism $I$ admits a section \'etale locally, as required.
	\end{proof}
	\begin{remark}
		We expect that the assumption $\text{char}\, k = 0$ is unnecessary. One may adjust directly the argument in \cite[Proposition 2.6.10]{Ser06} for flag schemes.
	\end{remark}

	\begin{theorem}[Criterion of constructibility]
		\label{th: constructibility_criterion}
		Let  $\underline{Z}$ be a flag scheme such that the ambient space $Z$ is a projective integral scheme,  $\underline{\Z}/S$ a projective family of flag schemes. Assume either $\text{char}\, k = 0$ or $l(\underline{Z}) = 0$.  Then the following statements are equivalent:
		\begin{enumerate}
			\item
				The set $\Iso_{\underline{Z}}(\underline{\Z}/S)$ is constructible.
			\item 
				 The set $\{(\underline{\Z}_s, \A_s): s\in \textnormal{Iso}_{\underline{Z}}(\underline{\Z}/S)(k)\}$ is bounded for every ample line bundle $\A$ on $\Z/S$.
			\item 
				The set $\{(\underline{\Z}_s, \A_s): s\in \textnormal{Iso}_{\underline{Z}}(\underline{\Z}/S)(k)\}$ is bounded for some ample line bundle $\A$ on $\Z/S$.
			\item[($\star$)]
				There exists an ample line bundle $\A$ on ${\Z}/S$, an integer $r\in \mathbb{\Zi}_{>0}$, and ample line bundles $A_1, A_2, \dots, A_r$ on ${Z}$ such that for every point $s\in \Iso_{\underline{Z}}(\underline{\Z}/S)(k)$ there exists an isomorphism $\varphi_s: \underline{Z} \to \underline{\Z}_s$ with the property: $\varphi_s^*\A_s \equiv A_{i_s}$ for some $i_s\in \{1, 2, \dots r\}$.
		\end{enumerate} 
	\end{theorem}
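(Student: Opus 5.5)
I would prove the cycle of implications $(1)\Rightarrow(2)\Rightarrow(3)\Rightarrow(\star)\Rightarrow(1)$. Throughout, let $I=\Iso_{\underline{Z}}(\underline{\Z}/S)$.

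\textbf{Step $(1)\Rightarrow(2)$.} Assume $I$ is constructible and fix any ample $\A$ on $\Z/S$. Write $I=\bigcup_j S_j$ with each $S_j$ locally closed, and give $S_j$ its reduced structure. Base changing $\underline{\Z}/S$ to $\bigsqcup_j S_j$ gives a projective family of flag schemes. The restriction of $\A$ to it is still relatively ample. Every closed point of this family corresponds to a fiber isomorphic to $\underline{Z}$, and every $s\in I(k)$ appears in it. This is exactly the definition of boundedness in \cref{def: bounded_tuples}, after taking the disjoint union of the finitely many strata as a single base.

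\textbf{Step $(2)\Rightarrow(3)$.} This is trivial.

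\textbf{Step $(3)\Rightarrow(\star)$.} Let $\underline{\Z}'/S'$ with ample $\A'$ be a bounding family. By construction all its closed fibers are isomorphic to $\underline{Z}$, so \cref{lemma: glueing_isomorphisms} applies. This is where the hypothesis $\text{char}\,k=0$ or $l(\underline{Z})=0$ enters. The lemma makes $\underline{\Z}'/S'$ étale locally trivial. Since $S'$ is quasi-compact, finitely many étale charts $U_j\to S'$ suffice, each with a trivialization $\underline{Z}\times U_j\cong \underline{\Z}'\times_{S'}U_j$. Pulling $\A'$ back along a trivialization gives a line bundle on $Z\times U_j$. Its numerical class on the fibers $Z\times\{u\}$ is locally constant in $u$, by flatness and the invariance of intersection numbers with curves in flat families. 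Shrinking to connected components of each $U_j$, we get finitely many ample classes $A_1,\dots,A_r$ on $Z$.

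For $s\in I(k)$ there is a point $s'\in S'$ with $(\underline{\Z}_s,\A_s)\approx(\underline{\Z}'_{s'},\A'_{s'})$. Composing this isomorphism with the trivialization at a lift of $s'$ gives $\varphi_s$ with $\varphi_s^*\A_s\equiv A_{i_s}$.

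\textbf{Step $(\star)\Rightarrow(1)$.} This is the main direction. Fix $\A$ and $A_1,\dots,A_r$ as in $(\star)$. Replacing $\A$ by a power $\A^{\otimes m}$, I make $\A$ very ample with vanishing higher cohomology on all fibers, uniformly. This is possible since $S$ is Noetherian and we may stratify. I also make each $A_i$ very ample with vanishing higher cohomology.

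By Matsusaka/Kleiman-type boundedness, the numerical class $A_i$ determines a bounded set of line bundles, namely the component of $\mathrm{Pic}$ of $Z$ with that class, which is of finite type. Hence the polarized flag schemes $(\underline{Z},L)$ with $L\equiv A_i$ form a bounded family. Concretely, the relevant $\mathrm{Pic}^{\tau}$-torsor $P_i$ is of finite type, and the universal line bundle gives embeddings of $\underline{Z}\times P_i$ into $\Pp^N$ with a frame bundle. Using \cref{lemma: immersion}, I would build the frame bundle $\widetilde P_i\to P_i$, a $\mathrm{GL}_{N+1}$-torsor, parametrizing bases of $H^0$. This yields a morphism $\widetilde P_i\to \mathbf{Hilb}$, landing in the flag Hilbert scheme of $\Pp^N$ with fixed Hilbert polynomials.

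Similarly, over the frame bundle $\widetilde S\to S$ of $f_*\A$ there is a morphism $\widetilde S\to\mathbf{Hilb}$. The key point is the following: $s\in I(k)$ if and only if the image of the fiber of $\widetilde S$ over $s$ meets $\bigcup_i\operatorname{Im}(\widetilde P_i)$. Indeed, $\varphi_s$ identifies $(\underline{\Z}_s,\A_s)$ with $(\underline{Z},\varphi_s^*\A_s)$ where $\varphi_s^*\A_s\equiv A_{i_s}$, so after choosing frames the two points of $\mathbf{Hilb}$ coincide. Conversely, equality of embedded flag schemes gives an isomorphism of flags.

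By Chevalley, each $\operatorname{Im}(\widetilde P_i)$ is constructible. Therefore
\[
I=\operatorname{pr}\bigl(\widetilde S\times_{\mathbf{Hilb}}\textstyle\bigsqcup_i\widetilde P_i\bigr)
\]
is constructible, again by Chevalley. The expected obstacle is exactly the finite-typeness of $\{L\in\mathrm{Pic}(Z):L\equiv A_i\}$ and the uniform normalization of cohomology. I would handle it via $\mathrm{Pic}^\tau_Z$ being of finite type, which holds for projective integral $Z$, and by Noetherian stratification of $S$ to make $f_*\A$ locally free with base change.
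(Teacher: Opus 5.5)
Your proposal is correct and follows essentially the same route as the paper: the same cycle of implications, (3)$\Rightarrow$($\star$) via \cref{lemma: glueing_isomorphisms} plus local constancy of numerical classes in a trivialized family, and ($\star$)$\Rightarrow$(1) via finite-type pieces of the Picard scheme of $Z$, embeddings by complete linear systems into a flag Hilbert scheme, and Chevalley's theorem. The differences are only technical. You parametrize the relevant line bundles by translates of $\mathrm{Pic}^\tau_Z$ instead of the Hilbert-polynomial components $\mathbf{Pic}_Z^{P(t)}$. You also use frame bundles on both sides and a fibre product over $\mathbf{Hilb}$, instead of trivializing $f_*\mathcal{A}$ and taking a $\mathrm{PGL}_{N+1}$-orbit, which gives the equality with $\Iso_{\underline{Z}}(\underline{\mathcal{Z}}/S)$ somewhat more directly.
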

	\begin{proof}
		The implication $(1)\implies (2)$ is immediate by \cref{def: bounded_tuples}, and $(2) \implies (3)$ is obvious. Let us prove $(3) \implies (\star)$. By hypothesis, there is a family of polarized flag schemes $(\underline{\Z}^\prime/S^\prime, \A^\prime)$ such that for every closed point $s\in \Iso_{\underline{Z}}(\underline{\Z}/S)$ there is an isomorphism $\psi_ s: (\underline{\Z}^\prime_{s^\prime}, \A^\prime_{s^\prime}) \to (\underline{\Z}_s, \A_s)$ for some $s^\prime\in S^\prime(k)$, and vice versa (see \cref{def: bounded_tuples}). In particular, $\Iso_{\underline{Z}}(\underline{\Z^\prime}/S^\prime) = S^\prime$. By \cref{lemma: glueing_isomorphisms}, every point $s_0\in S^\prime(k)$ has an \'etale neighborhood $(U, s_0)$ such that $\underline{\Z^\prime}\times_{S^\prime} U \cong \underline{Z}\times_k U$ over $U$.
		For each point $s\in U(k)$ we denote its image in $S^\prime$ by the same symbol $s$ slightly abusing the notation.
		
		Let $\alpha:\underline{Z}\times_k U \to \underline{\Z^\prime}$ be the induced morphism. Hence, $\alpha_s: \underline{Z}\times_k s \to (\underline{\Z^\prime})_s$ is an isomorphism for every $s\in U(k)$. Then for every closed point $s\in U$ we have $(\alpha^*\A^\prime)_s \sim_\text{alg} (\alpha^*\A^\prime)_{s_0}$. Therefore, for every point $s\in U(k)$ there is an isomorphism $\varphi_s = \psi_s\circ \alpha_s: \underline{Z}\to \underline{\Z}_s$ such that $\varphi_s^*\A_s \equiv (\alpha^*\A^\prime)_{s_0}$. Thus, invoking Noetherian induction on closed subsets of $S^\prime$, we prove the implication $(3)\implies (\star)$.
		
		Now, we prove the most delicate implication $(\star)\implies (1)$.
		Without loss of generality, we can assume that the base $S$ is irreducible. By Noetherian induction on closed subsets of $S$, it is sufficient to prove the statement over an open dense subset. Hence, we may assume that the base $S = \text{Spec}\, R$ is an affine integral scheme (over $k$). Let $f: \Z\to S$ be the given morphism, $n = l(\underline{Z})$.\\
		\textbf{Step 1.} We reduce the problem to proving the constructibility of a subset of a flag Hilbert scheme  that parametrize the closed flag subschemes (of a projective space) isomorphic to $\underline{Z}$. Choose a sufficiently large integer $M\in \Zi_{>0}$ such that $\A^{\otimes M}$ is very ample on $\Z/S$ and $H^1(\Z_\eta, \A_\eta^{\otimes M}) = 0$ (Serre's vanishing theorem), where $\eta \in S$ is the generic point. By the Cohomology and Base Change Theorem \cite[Chapter III, Theorem 12.11]{Har77}, after shrinking the base, for every $s$ in $S$ we have:
		\begin{enumerate}
			\item [(i)]
				$H^1(\Z_s, \A_s) = 0$.
			\item [(ii)]
				The natural map $H^0(\Z, \A)\otimes \kappa(s) \to H^0(\Z_s, \A_s)$ is an isomorphism.
			\item [(iii)]
				The direct image $f_*\A$ is a free sheaf of some rank $N+1\ge1$.
		\end{enumerate}
		Then by \cref{lemma: immersion} there is a closed immersion $\iota: \Z\hookrightarrow \Pp^N_S$ over $S$ such that the following holds for every point $s\in S$: $H^0(\Pp^N_{\kappa(s)},\mathcal{I}_s(1))=H^1(\Pp^N_{\kappa(s)},\mathcal{I}_s(1))=0$. As $\Z_j \subset \Z$ for each $j = 1, \dots n$, we shall write $\iota: \underline{\Z} \hookrightarrow \Pp^N_S$.
		Let $$\mathbf{P}(t): = \mathbf{P}_\eta(t) = \left(\chi(\Z_\eta, \A_\eta^{\otimes Mt}); \chi((\Z_1)_\eta, \A_\eta^{\otimes Mt}), \dots, \chi((\Z_n)_\eta, \A_\eta^{\otimes Mt})\right)\in \Q[t]^{\times (n+1)}$$ be the Hilbert polynomial of $\underline{\Z}_\eta$ with respect to $\A_\eta^{\otimes M}$. For every $s\in S$ the Hilbert polynomial $\mathbf{P}_s(t)$ of $\underline{\Z}_s$ with respect to $\A^{\otimes M}_s$ is defined similarly. By the flatness of $f$ and the connectedness of $S$ we conclude that $\mathbf{P}_s(t) = \mathbf{P}(t)$ for every $s\in S$.
		
		Next, by $\Hilb$ denote the flag Hilbert scheme of $\Pp^N_k$ relative to $\mathbf{P}(t)$ \cite[Theorem 4.5.1]{Ser06}. By universality of $\Hilb$, there is a natural map $h: S\to \Hilb$ which sends every point $s\in S(k)$ to $[\iota_s: \underline{\Z}_s\hookrightarrow \Pp^N_k]$. Let us define the following subset:
		\begin{equation*}
			\text{LS}_Z = \begin{Bmatrix}
				[\iota_V: \underline{V}\hookrightarrow \Pp^N_k]: & \underline{V}\approx \underline{Z} \\ &  H^1\left(V, \Os_{\Pp^N_k}(1)|_V\right) = 0\\ & H^0\left(\Pp^N_k, \mathcal{I}_V(1)\right) =  H^1\left(\Pp^N_k, \mathcal{I}_V(1)\right) = 0
			\end{Bmatrix} \subseteq \Hilb (k),
		\end{equation*}
		where $\mathcal{I}_V = \text{Ker}\left(\Os_{\Pp^N_k} \to (\iota_V)_*\Os_V\right)$.
		We claim that the subset $\LS\subseteq \Hilb(k)$ is constructible. Suppose that the claim holds. Recall that the base field $k$ is algebraically closed. Then we have $h^{-1}(\LS) \subseteq \Iso_{\underline{Z}}(\Z/S) (k)$ by the definition of the flag Hilbert scheme. The reverse inclusion holds by the construction of the immersion $\iota: \underline{Z} \hookrightarrow \Pp^N_S$. Hence the set $\Iso_{\underline{Z}}(\underline{\Z}/S)(k) = h^{-1}(\LS)$ is constructible in $S(k)$. Furthermore, the family $\underline{\Z}/S$ is \'etale locally trivial due to \cref{lemma: glueing_isomorphisms}. Thus, the subset  $\Iso_{\underline{Z}}(\underline{\Z}/S) \subseteq S$ is constructible. Now let us prove the claim.
		
		\textbf{Step 2.} We show that certain locally closed subschemes of the Picard scheme $\Pic$ determine the desired subset $\LS$. By \cite[Chapter 9, Theorem 9.4.8]{FGA05}, the Picard scheme $\Pic$ exists because $Z$ is an (geometrically) integral projective scheme. Moreover, there exists a universal Poincar\'e sheaf $\Pu$ on $Z\times_k \Pic$ since $Z(k)\neq \emptyset$ (see \cite[Chapter 9, Exercise 9.4.2]{FGA05}). Choose an ample invertible sheaf $A$ on $Z$. Let us recall that the Hilbert polynomial $P(t)$ of a coherent sheaf $L$ on a projective scheme $Z$ with respect to $A$ is the polynomial $P_{A}[L](t) = \chi (Z, L\otimes A^{\otimes t}) \in \Q[t]$. Given a polynomial $P(t)\in \Q[t]$, let $\Pic^{P(t)}\subseteq \Pic$ be the set of points representing invertible sheaves $L$ such that $P_A[L](t) = P(t)$. By \cite[Chapter 9, Theorem 9.6.20]{FGA05}, the $\Pic^{P(t)}$ are open and closed subschemes of $\Pic$; they are disjoint and cover. Moreover, the $\Pic^{P(t)}$ are quasiprojective. It follows from the hypothesis ($\star$) that for every closed point $s\in \Iso_{\underline{Z}}(\underline{\Z}/S)$ there exists an isomorphism $\varphi_s: \underline{Z}\to \underline{\Z}_s$ such that $P_A[\varphi_s^*\A^{\otimes M}](t) = P_A[A_{i_s}^{\otimes M}]$ for some $i_s\in \{1, 2, \dots, r\}$. Therefore, we obtain the inclusion $$\begin{Bmatrix}
			[\varphi^*_s\A_s^{\otimes M}]\in \Pic: s\in \Iso_{\underline{Z}}(\underline{\Z}/S)(k)
		\end{Bmatrix} \subseteq \bigcup_{i=1}^r \Pic^{P_A[A_i](t)}.$$ Without loss of generality, we may assume that $r = 1$. Let $P(t) = P_A[A_1^{\otimes M}](t)$.
		
		\textbf{Step 2.1.} We show that the following subset is constructible $$\VA = \{[L]\in \Pic^{P(t)}: L \text{ is very ample}, h^0(Z, L) = N+1, h^1(Z, L) = 0, \ \chi(\underline{Z}, L^{\otimes t}) = \mathbf{P}(t)\}.$$ Indeed, by the general result of Grothendieck (see \cite[Appendix E]{GW20}) the set $\{s\in \Pic^{P(t)}: \Pu_s \text{ is very ample}\}$ is constructible. By the semicontinuity theorem \cite[Chapter III, Theorem 12.8]{Har77}, the set $\{s\in \Pic^{P(t)}: h^1(Z, \Pu_s) = 0\}$ is open. Similarly, the set $\{s\in \Pic^{P(t)}: h^0(Z, \Pu_s) = N + 1\}$ is constructible. In addition, the Euler characteristic $\chi(Z_i, \Pu_s)$ is locally constant in families for each $i = 0, 1, \dots, n$. Then the set $\VA$ is constructible. Hence after stratification of $\VA$, we may assume that $\Pu|_{\VA}$ is a very ample invertible sheaf on $Z\times_k \VA / \VA$. Without further notice, we denote pullbacks of $\Pu$ under base change by the same symbol $\Pu$ when no confusion arises.
		
		\textbf{Step 2.2.} We construct an open cover $\bigcup_i U_i$ of $\VA$ together with morphisms $h_i: U_i \to \Hilb$ such that $\cup_i h(U_i)(k)$ equals to $\LS$ up to the natural action of $\Aut(\Pp^N_k)$. By construction, we have $H^1(Z, \Pu_s) = 0$ for every $s\in \VA$. This yields that there is a finite affine cover $\VA= \bigcup_i U_i$ such that the conditions (i)-(iii) from Step 1  hold for $\Pu|_{U_i}$. By \cref{lemma: immersion}, there are closed immersions $\iota_i: \underline{Z} \times_k U_i \hookrightarrow \Pp^N_{U_i}$ such that $H^0(\Pp^N_k, (\mathcal{I}_i)_s(1)) = H^1(\Pp^N_{k}, (\mathcal{I}_i(1))_s) = 0$ for all $s\in U_i(k)$ and all $i$. Here, $\mathcal{I}_i = \text{Ker}\left(\Os_{\Pp^N_{U_i}}\to (\iota_i)_*\Os_Z\right)$. Therefore, there exist morphisms $h_i: U_i\to \Hilb$. By construction, $\VA^h: = \bigcup_i h_i(U_i)(k)\subseteq \LS$. By Chevalley's Theorem \cite[Theorem 1.8.4]{EGAIV}, the set $\VA^h$ is constructible.
		
		\textbf{Step 2.3.} We prove that $\Aut(\Pp^N_k)\cdot \VA^h = \LS$. For every point $[g: \underline{Z}\hookrightarrow \Pp^N_k]\in \LS$ we can canonically assign the point $[g^*\Os(1)]\in \VA$. However, the immersion $\iota: \underline{Z}\hookrightarrow\Pp^N_k$ constructed above may differ from $g$ by an automorphism of the projective space. Let $\PGL$ be the quasiprojective group scheme with $\PGL(k) = \Aut(\Pp^N_k)$. There is a natural (left) action $F: \PGL\times_k \Hilb \to \Hilb$ defined on closed points by $F(\varphi, [g]) = [\varphi \circ g]$. Since $F(\VA^h) = \LS$, the set $\LS$ is constructible by Chevalley's Theorem. This completes the proof.
	\end{proof}
	
	\begin{corollary}
		\label{cor: criterion_pairs}
		Assume $\text{char}\, k = 0$. Let  $(X, B)$ be a projective pair,  $(\X/S, \B)$ a projective elementary family. Then the following statements are equivalent:
		\begin{itemize}
			\item
			The set $\Iso_{(X, B)}(\X/S, \B)$ is constructible.
			\item
			There exists an ample line bundle $\A$ on ${\X}/S$, an integer $r\in \mathbb{\Zi}_{>0}$, and ample line bundles $A_1, A_2, \dots, A_r$ on ${X}$ such that for every point $s\in \Iso_{(X, B)}(\X/S, \B)(k)$ there exists an isomorphism $\varphi_s: {(X, B)} \to (\X_s, \B_s)$ with the property: $\varphi_s^*\A_s \equiv A_{i_s}$ for some $i_s\in \{1, 2, \dots r\}$.
		\end{itemize} 
	\end{corollary}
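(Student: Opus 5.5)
The plan is to reduce the corollary to \cref{th: constructibility_criterion} (equivalence $(1)\Leftrightarrow(\star)$) by means of the canonical flag scheme of \cref{const: associated_flag}. The difficulty is that the family $(\X/S,\B)$ is not literally a family of flag schemes in the paper's sense, so the first task is to manufacture one.

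Write $\B=\sum_i b_i\B_i$. On the projective elementary family $(\X/S,\B)$, group the components by coefficient value $a_1>a_2>\dots>a_n$, and set
\[
\Z_0=\X,\qquad \Z_j=\bigcup_{i:\,b_i\le a_j,\ b_i\neq 0}\B_i ,
\]
each with its reduced structure. By conditions (1)--(3) of \cref{def: elementary_family}, every $\B_i$ is flat over $S$ and $(\B_i)_s$ is a prime divisor. A finite union of flat reduced closed subschemes need not be flat, however. I would therefore first stratify $S$, using generic flatness together with Noetherian induction, so that each $\Z_j\to S$ is flat and the fibre $(\Z_j)_s$ is the reduced union $\bigcup (\B_i)_s$. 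The latter holds over a dense open set, because the generic fibre of a reduced scheme is generically reduced (char $0$). Since constructibility may be checked stratum by stratum, this reduction is harmless. The result is a projective family of flag schemes $\underline{\Z}/S$ whose fibres are $\underline{\Z}_s=\underline{Z}^{(\X_s,\B_s)}$, where distinctness (3) guarantees that the coefficient grouping is preserved on fibres.

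Next compare the isomorphism loci. If $B$ has a coefficient profile different from the one just used (different values or numbers of components), then $\Iso_{(X,B)}=\emptyset$ on that stratum and both conditions of the corollary hold trivially. Otherwise, \cref{remark: iso_pair_and_flag} gives the equality
\[
\Iso_{(X,B)}(\X/S,\B)(k)=\Iso_{\underline{Z}^{(X,B)}}(\underline{\Z}/S)(k),
\]
and likewise at non-closed points after passing to geometric fibres. The same remark identifies log isomorphisms $(X,B)\to(\X_s,\B_s)$ with flag isomorphisms $\underline{Z}^{(X,B)}\to\underline{\Z}_s$. Consequently, the condition stated in the corollary coincides with $(\star)$ of \cref{th: constructibility_criterion} for $\underline{Z}=\underline{Z}^{(X,B)}$. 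Because $X$ is a projective variety (integral) and $\operatorname{char}k=0$, the theorem applies and yields the equivalence on each stratum.

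Finally, pass back from the stratification. For the direction from constructibility to the condition, restrict $\A$ to each stratum. The strata are finitely many, so the union of the finite lists $A_i$ is still finite. One subtlety is that the theorem requires a single ample line bundle on $\X/S$; restricting a global $\A$ provides one on every stratum, so this direction is fine. For the converse direction, a global condition restricts to each stratum, and the resulting finite union of constructible sets is constructible.

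The main obstacle is the flatness and reducedness of the unions $\Z_j$ and of their fibres. This is exactly where the stratification argument and the char $0$ hypothesis are needed, in addition to their role in \cref{lemma: glueing_isomorphisms}.
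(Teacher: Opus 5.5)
Your proposal is correct and follows essentially the same route as the paper. Like the paper, you form the flag family $\underline{Z}^{(\X,\B)}$ and use generic flatness with Noetherian induction so that it becomes a family of flag schemes whose fibres are the associated flags of $(\X_s,\B_s)$. You then identify the two isomorphism loci via \cref{remark: iso_pair_and_flag} and apply \cref{th: constructibility_criterion}. Your added checks, namely reducedness of the fibres of the unions $\Z_j$ and the requirement that the nonzero coefficient values of $B$ and $\B$ agree (otherwise $\Iso_{(X,B)}(\X/S,\B)=\emptyset$), supply details the paper leaves implicit.
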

	\begin{proof}
		Let $\underline{Z} = \underline{Z}^{(X, B)}$ (resp. $\underline{\Z} = \underline{\Z}^{(\X, \B)}$) be the flag scheme associated with $(X, B)$ (resp. $(\X, \B)$) as in \cref{const: associated_flag}. By the generic flatness theorem, \cite[Theorem 5.12]{FGA05} we may assume that $\underline{\Z}$ is a family of flag schemes. According to \cref{remark: iso_pair_and_flag} we have $\Iso_{(X, B)}(\X/S, \B) = \Iso_{\underline{Z}}(\underline{\Z}/S)$ after shrinking the base. Then the theorem follows from \cref{th: constructibility_criterion} by Noetherian induction on closed subsets of $S$.
	\end{proof}

	Recall that for an integral scheme $Z$ (over $k$) the group of Cartier divisors modulo the linear equivalence is naturally isomorphic to the group $\textnormal{Pic}(Z)$ of invertible sheaves on $Z$ modulo isomorphisms. Suppose $Z$ is projective. By $\text{NS}(Z) = \text{Pic}(Z)/\equiv$ denote the group of Cartier divisors modulo the numerical equivalence, let $N^1_\Q(Z) = \text{NS}(Z)\otimes_\Zi \Q, N^1_\R(Z) = N^1_\Q(Z)\otimes_\Q \R$. Let $\text{Amp}(Z) \subseteq N^1_\R(Z)$ (resp. $\text{Nef}(Z) \subseteq N^1_\R(Z)$) denote the cone generated by all ample Cartier divisors (resp. nef $\R$-Cartier $\R$-divisors). According to \cite[Chapter IV, Theorem 1]{Kle66}, we have $\text{Nef}(Z) = \overline{\text{Amp}(Z)}$, and $\text{Amp}(Z) = \text{Int} \left(\text{Nef}(Z)\right)$. Next, $\text{Eff}(Z) \subseteq N^1_\R(Z)$ is the cone generated by all effective Cartier divisors. Put $\text{Nef}^e(Z) = \text{Nef}(Z)\cap \text{Eff}(Z)$. Now, suppose $\underline{Z}$ is a flag scheme. Then we define its automorphism group $\text{Aut}(\underline{Z}) = \{\varphi\in \text{Aut}(Z): \ \text{Im}_\varphi (Z_i) = Z_i \ \forall i = 0, 1, \dots, l(\underline{Z})\}$, which naturally acts on $\text{Nef}^e(Z)$. For a pair $(X, B)$ we set $\Aut(X, B) = \Aut(\underline{Z}^{(X,B)})$, where $\underline{Z}^{(X, B)}$ is the flag scheme associated with $(X, B)$ (see \cref{const: associated_flag}).

	We say that  $\textnormal{Nef}^e(Z)$ \textit{admits a rational polyhedral fundamental domain for the natural action of} $\textnormal{Aut}(\underline{Z})$ if there is a rational polyhedral cone $\Pi\subseteq \textnormal{Nef}^e(Z)$:
	\begin{equation*}
		\text{Aut}(\underline{Z})\cdot \Pi = \text{Nef}^e(Z),
	\end{equation*}
	and for every $\varphi\in \text{Aut}(\underline{Z})$ we have $\text{Int}(\varphi^*\Pi)\cap \text{Int}(\Pi) = \emptyset$ unless $\varphi^* = \text{id}$. We refer to \cref{Appendix Cone_Conjecture} for further discussions.
	\begin{corollary}
		\label{cor: cone_and_constructibility}
		Let  $\underline{Z}$ be a flag scheme such that the ambient space $Z$ is a projective integral scheme,  $\underline{\Z}/S$ a projective family of flag schemes. Assume either $\text{char}\, k = 0$ or $l(\underline{Z}) = 0$. Then the subset $\Iso_{\underline{Z}}(\underline{\Z}/S)\subseteq S$ is constructible  if $\textnormal{Nef}^e(Z)$ admits a rational polyhedral fundamental domain for the natural action of $\textnormal{Aut}(\underline{Z})$.
	\end{corollary}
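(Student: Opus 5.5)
The plan is to verify condition ($\star$) of \cref{th: constructibility_criterion}; constructibility of $\Iso_{\underline{Z}}(\underline{\Z}/S)$ then follows from the implication $(\star)\implies(1)$. Let $\Pi\subseteq\Nef^e(Z)$ be a rational polyhedral fundamental domain for $\Aut(\underline{Z})$. Fix any ample line bundle $\A$ on $\Z/S$. We may assume $S$ is irreducible, using Noetherian induction on closed subsets of $S$. Flatness of $\underline{\Z}/S$ then shows that the Hilbert polynomial of $\A_s$ on $\Z_s$ is constant. In particular, the top self-intersection $\A_s^{\dim Z}=:V$ is the same for every $s$.

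For each $s\in\Iso_{\underline{Z}}(\underline{\Z}/S)(k)$, pick any isomorphism $\psi_s:\underline{Z}\to\underline{\Z}_s$. The class $[\psi_s^*\A_s]$ is ample, hence lies in $\Amp(Z)\subseteq\Nef^e(Z)$, so there is $g\in\Aut(\underline{Z})$ with $g^*[\psi_s^*\A_s]\in\Pi$. Set $\varphi_s=\psi_s\circ g$. This is again an isomorphism of flag schemes $\underline{Z}\to\underline{\Z}_s$, and $[\varphi_s^*\A_s]\in\Pi\cap\Amp(Z)$ is an integral class with self-intersection $V$. It therefore suffices to show that the set
\[
\Sigma=\{D\in\textnormal{NS}(Z)\cap\Pi\cap\Amp(Z): D^{\dim Z}=V\}
\]
is finite. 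Its elements, represented by ample line bundles $A_1,\dots,A_r$, then witness ($\star$).

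The finiteness of $\Sigma$ is the main obstacle. The cone $\Pi$ is rational polyhedral, so I will write $\Pi=\sum_j\R_{\ge0}v_j$ with finitely many generators $v_j\in N^1_\Q(Z)$. The difficulty is that some faces of $\Pi$ lie on the boundary of $\Nef(Z)$, where the volume may vanish, so integrality and a volume bound alone need not force finiteness. I would handle this by induction on the faces of $\Pi$. Since $\Pi\subseteq\Nef(Z)$, for $D=\sum c_jv_j$ with $c_j\ge0$ the self-intersection $D^{\dim Z}$ is a polynomial in the $c_j$ with nonnegative coefficients, because mixed intersections of nef classes are nonnegative. The key claim is then the following: if $D\in\Pi$ has unbounded coefficients along a face $\sigma$ while $D^{\dim Z}$ stays bounded, then every $D$ in the relative interior of $\sigma$ has $D^{\dim Z}=0$. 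This uses Khovanskii--Teissier-type log-concavity together with the fact that $\Amp(Z)$ is the interior of $\Nef(Z)$.

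I would first try a cleaner route via a compactness argument. Take a sequence of distinct $D_n\in\Sigma$. Integrality of the $D_n$ forces their norms to tend to infinity, so $D_n/\|D_n\|$ accumulates at some $D_\infty\in\Pi$ with $D_\infty^{\dim Z}=0$, which therefore lies on $\partial\Nef(Z)$. I would then need to derive a contradiction from the fundamental-domain property, for instance by producing infinitely many translates $g^*\Pi$ meeting a small neighborhood of an ample class. This is the step I expect to be hardest. If the direct argument fails, a fallback is to strengthen the hypothesis by using that the domain meets only finitely many $\textnormal{NS}$-classes in each slice of bounded degree against a fixed ample class $H$. Pairing with $H$ gives a linear functional, and the points of the lattice in $\Pi\cap\{D\cdot H^{\dim Z-1}\le C\}$ are finite. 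Bounding $D\cdot H^{\dim Z-1}$ in terms of $D^{\dim Z}$ requires the same boundary analysis.
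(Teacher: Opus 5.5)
Your reduction is the same as the paper's. You fix an ample $\A$, note that $\A_s^{d}$ takes finitely many values, use $\Aut(\underline{Z})\cdot\Pi=\Nef^e(Z)\supseteq\Amp(Z)$ to move $[\psi_s^*\A_s]$ into $\Pi$, and reduce $(\star)$ of the criterion to the finiteness of $\Sigma$. But that finiteness is the only real content of the corollary beyond the criterion, and you leave it unproved. The routes you sketch also aim at the wrong contradiction. A limit direction $D_\infty\in\Pi$ with $D_\infty^{d}=0$ is perfectly consistent with the hypotheses. Since $\Aut(\underline{Z})$ preserves $\partial\Nef(Z)$ and $\Aut(\underline{Z})\cdot\Pi=\Nef^e(Z)$, the domain $\Pi$ must meet $\partial\Nef(Z)$ whenever $\Nef^e(Z)$ does; examples are the fibre classes on $E\times E$ and elliptic fibration classes on a K3 surface. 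The disjointness of the interiors of the translates $g^*\Pi$ is irrelevant, since all your $D_n$ already lie in $\Pi$. The face-induction claim only says that escaping directions have volume zero, which again is no contradiction. What you never use is that the elements of $\Sigma$ are \emph{integral and ample}, and this is exactly what makes the step work.

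By Gordan's lemma, the monoid $\Pi\cap\textnormal{NS}(Z)$ is generated by finitely many nonzero classes $u_1,\dots,u_m$. For $D\in\Sigma$, write $D=\sum_j n_ju_j$ with $n_j\in\Zi_{\ge0}$. Then
\[
V=D^{d}=D^{d-1}\cdot D=\sum_{j} n_j\,\bigl(D^{d-1}\cdot u_j\bigr).
\]
Each $D^{d-1}\cdot u_j$ is an integer, being an intersection number of Cartier classes, and it is positive. Indeed, $u_j$ is a nonzero rational point of $\textnormal{Eff}(Z)$, so by Carath\'eodory it is a positive rational combination of classes of nonzero effective Cartier divisors $E$. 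Each such $E$ satisfies $D^{d-1}\cdot E>0$ because $D$ is ample on every component of $E$. Hence $\sum_j n_j\le V$, and $\Sigma$ is finite. This is what the paper's short argument (finite generation of $\Pi\cap N^1_{\Zi}$ plus nonnegativity of mixed intersections) amounts to. Ampleness of $D$ is genuinely needed here. The paper's auxiliary remark that $d$ linearly independent elements of $\Pi$ have positive product fails in general: pull back three independent nef classes from a surface $S$ with $\rho(S)\ge 3$ to $S\times\mathbb{P}^1$, and their triple product is zero.
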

	\begin{proof}
		Choose an ample invertible sheaf $\A$ on $\Z/S$, and set $N = \A_{s_0}^{d}>0$ for some $s_0\in \Iso_{\underline{Z}}(\underline{\Z}/S)(k)$, where $d = \dim Z$. Then for every $s\in \Iso_{\underline{Z}}(\underline{\Z}/S)(k)$ there is an isomorphism $\varphi: \underline{Z}\to \underline{\Z}_s$ such that $$[\varphi_s^*\A_s] \in \{[A]\in \text{NS}(Z): A^{d} = N, \ A \text{ is ample}\}.$$
		We claim that the number of orbits for the natural action of $\Aut(\underline{Z})$ on the defined set is finite. Indeed, there is a rational polyhedral fundamental domain $\Pi\subseteq \textnormal{Nef}^e(Z)$ for $\Aut(\underline{Z})$, by the hypothesis. Let $x_1, x_2, \dots, x_d\in \Pi$ be linearly independent elements, then $x_{i_1}\cdot x_{i_2}\cdot \dots \cdot x_{i_d}\ge 0$ for every tuple $1\le i_1 \le i_2 \le \dots \le i_d\le d$, and $x_{i_1}\cdot x_{i_2}\cdot \dots \cdot x_{i_d}> 0$ if all indices are different. Since $\Pi\cap N^1_\Zi(X/Y)$ is a finitely generated semigroup, it follows that the set $\{[A]\in \Pi\cap N^1_\Zi(X/Y): A^d = N\}$ is finite. Now, the statement follows from \cref{th: constructibility_criterion}.
	\end{proof}

	\section{Applications}
	\label{sec: main}
	\subsection{Constructibility} Combining \cref{cor: criterion_pairs} and \cref{th: constructible} we obtain the following.
	\begin{theorem}
		\label{th: Fano_0_General}
		Let $(\X/S, \B)$ be a projective elementary family, $(X, B)$ be a projective klt log pair, and $B$ is an effective $\Q$-divisor. Then the subset $\Iso_{(X, B)}(\X/S, \B) \subseteq S$ is constructible in each of the following cases:
		\begin{enumerate}
			\item[($-$)] 
				Fano: $-(K_X + B)$ is ample.
			\item[($0$)]
				Calabi--Yau: $K_X + B\equiv 0$.
			\item[($+$)] 
				General type: $K_X + B$ is ample.
			\item[($\star$)]
				$\Nef^e(X)$ admits a rational polyhedral fundamental domain for $\Aut(X, B)$. 
		\end{enumerate}
		Moreover, in case (0) the subset $\Iso_{(X, B)}^1(\X/S, \B)\subseteq S$ is also constructible.
	\end{theorem}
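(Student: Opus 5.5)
The plan is to treat the four cases separately. Cases ($0$) and ($\star$) follow directly from results already proved, while cases ($-$) and ($+$) reduce to the criterion of \cref{cor: criterion_pairs}.

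\textbf{Cases ($0$) and ($\star$).} Case ($0$), including the statement about $\Iso^1_{(X,B)}(\X/S,\B)$, is exactly \cref{th: constructible}. Case ($\star$) follows from \cref{cor: cone_and_constructibility}, applied to the flag schemes $\underline{Z}^{(X,B)}$ and $\underline{\Z}^{(\X,\B)}$ of \cref{const: associated_flag}. As in the proof of \cref{cor: criterion_pairs}, we first use generic flatness and Noetherian induction on $S$ to make $\underline{\Z}^{(\X,\B)}/S$ a family of flag schemes. We then use \cref{remark: iso_pair_and_flag} to identify $\Iso_{(X,B)}(\X/S,\B)$ with $\Iso_{\underline{Z}}(\underline{\Z}/S)$. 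Since $\Aut(X,B)=\Aut(\underline{Z}^{(X,B)})$ by definition, the hypothesis of ($\star$) is precisely the hypothesis of \cref{cor: cone_and_constructibility}.

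\textbf{Cases ($-$) and ($+$).} For these I would verify the second condition of \cref{cor: criterion_pairs} with $r=1$.

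First, by Noetherian induction I may assume $S$ is irreducible. I may also assume the following:
\begin{itemize}
\item $\Iso_{(X,B)}(\X/S,\B)$ is dense in $S$;
\item $(\X,\B)$ is a log pair;
\item $(K_\X+\B)|_{\X_s}=K_{\X_s}+\B_s$ for all $s$ (by \cref{reamrk: restriction_canonical} after shrinking);
\item $\X/S$ is $\Q$-factorial enough that $\pm m(K_\X+\B)$ is Cartier for a fixed $m$ with $\pm m(K_X+B)$ Cartier.
\end{itemize}
The last point needs care. I would replace $\X$ over a dense open set by the relative ample model, or simply work on the open set where $\pm(K_\X+\B)$ is $\Q$-Cartier with the right index. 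Ampleness is open in proper flat families, so after shrinking, $\A:=\Os_\X(\pm m(K_\X+\B))$ is ample on $\X/S$.

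Now take $s\in\Iso_{(X,B)}(\X/S,\B)(k)$ and any log isomorphism $\varphi_s:(X,B)\to(\X_s,\B_s)$. Then $\varphi_s^*(K_{\X_s}+\B_s)\sim K_X+B$, since pullback of canonical divisors is preserved by isomorphisms and $\varphi_s^*\B_s=B$. Hence $\varphi_s^*\A_s\equiv A_1:=\Os_X(\pm m(K_X+B))$, which is ample on $X$. So the criterion holds with $r=1$, and \cref{cor: criterion_pairs} gives constructibility over the open set. Noetherian induction completes the argument.

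\textbf{Main obstacle.} The delicate point is making $K_\X+\B$ a $\Q$-Cartier divisor whose restriction to \emph{every} fiber agrees with $K_{\X_s}+\B_s$ with a uniform Cartier index, and not just for general fibers. To handle this, I would stratify $S$ and, on each stratum, pass to an open set where the relative canonical sheaf $\omega_{\X/S}^{[m]}(m\B)$ is invertible and commutes with base change. Since the fibers are normal and $\B$ is flat with prime-divisor fibers, the reflexive power should commute with base change off codimension two on an open subset of the base. Once this uniformity is established, ampleness of $\pm(K+B)$ on one fiber $\X_{s_0}\approx X$ spreads to an open neighborhood, and the rest of the argument is formal.
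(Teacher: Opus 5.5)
Your proposal is correct and is essentially the paper's proof. Cases ($0$) and ($\star$) are quoted from \cref{th: constructible} and \cref{cor: cone_and_constructibility}, and cases ($\pm$) apply \cref{cor: criterion_pairs} with $r=1$ to $\A=\Os_\X(\pm m(K_\X+\B))$, which becomes relatively ample after shrinking by openness of ampleness. The one difference is in your ``main obstacle'': the paper handles it with \cref{lemma: klt_0_families}(1), where density of the klt fibers makes $(\X,\B)$ a klt log pair over a dense open set, so $m(K_\X+\B)$ is Cartier there, together with \cref{reamrk: restriction_canonical}, rather than with a reflexive base-change argument.
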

	\begin{proof}
		Indeed, case $(0)$ is content of \cref{th: constructible}. In addition, case $(\star)$ is rephrasing of \cref{cor: cone_and_constructibility} in terms of pairs. Let us prove cases $(\pm)$ arguing by Noetherian induction on closed subsets of $S$. By \cref{lemma: klt_0_families}, we may assume that $(\X, \B)$ is a klt log pair, and set $\A = \Os_\X(m(K_\X + \B))$ for $m\gg 1$. According to \cref{reamrk: restriction_canonical}, we have $\A_s = \Os_{\X_s}(m(K_{\X_s} + \B_s))$ for all points $s\in S$. Then for every $s\in \Iso_{(X, B)}(\X/S, \B)(\Ko)$ there is a log isomorphism $\varphi_s: (X, B) \to (\X_s, \B_s)$ such that $\varphi_s^* \A_s \equiv \Os_X(m(K_X + B))$. Since ampleness is an open condition in families \cite[Theorem 1.2.17]{Laz04}, either $\A$ or its dual $-\A$ is ample on $\X/S$ after shrinking the base. This implies that the subset $\Iso_{(X, B)}(\X/S, \B)$ is constructible due to \cref{cor: criterion_pairs}.
	\end{proof}
	
	\subsection{Consequences of the cone conjecture}
	Let $\mathfrak{C}$ be class of pairs. By $|\Cc|$ we denote the set of (log) isomorphism classes included in $\Cc$.
	\begin{theorem}
		\label{th: main}
		Let $\Cc$ be a class of $0$-equivalent projective klt $0$-pairs with $\Q$-coefficients. Suppose that the class $\Cc$ admits a bounded polarization. Then the set $|\Cc|$ is finite.
	\end{theorem}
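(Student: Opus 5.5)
The plan is to combine the boundedness of \cref{prop: bounded_subclass} with the constructibility of $\Iso^1$ from \cref{th: constructible}. First, by \cref{prop: bounded_subclass} the class of polarized pairs associated with $\Cc$ sits inside a bounded class. So there are finitely many projective elementary families $(\X^{(j)}/S^{(j)},\B^{(j)})$ such that every $(X_\al,B_\al)\in\Cc$ is log isomorphic to a fiber $(\X^{(j)}_{s_\al},\B^{(j)}_{s_\al})$ over some closed point $s_\al$. It suffices to treat one family $(\X/S,\B)$. Fix a member $(X,B)\in\Cc$. Every other member is $0$-equivalent to $(X,B)$, so the natural set to control is the set of points $s$ whose fiber is $0$-equivalent to $(X,B)$.

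The difficulty is that \cref{th: constructible} only controls $\Iso_{(X,B)}$ and $\Iso^1_{(X,B)}$, while members of $\Cc$ need not be small modifications of $(X,B)$. To fix this I pass to $\Q$-factorial terminalizations. Each $(X_\al,B_\al)$ has a terminalization $(X_\al^{\rm trm},B_\al^{\rm trm})$. Any two of these are $0$-equivalent terminal $\Q$-factorial pairs, and by \cref{prop: flops_0-equivalent} and \cite{Kaw08} they are connected by flops, hence are small modifications of one another. I would carry out the terminalization in families: stratify $S$ and take a relative terminalization $(\X^{\rm trm},\B^{\rm trm})\to(\X,\B)$ over each stratum, arranged (after shrinking) so that restricting to fibers gives terminalizations of the fibers; \cref{lemma: klt_0_families} keeps the fibers klt $0$-pairs. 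Then for a fixed terminal model $(X^{\rm trm},B^{\rm trm})$, every $s_\al$ lies in $\Iso^1_{(X^{\rm trm},B^{\rm trm})}(\X^{\rm trm}/S,\B^{\rm trm})$. By \cref{th: constructible} this set is constructible, and it is contained in the closure of the $s_\al$'s.

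I would then repeat the Noetherian induction from the proof of \cref{th: constructible}. Restrict to an irreducible component $S'$ of the closure of $\{s_\al\}$ on which the $s_\al$ are dense. The proof of \cref{th: constructible} then produces an étale base change $U\to S'$ over which the terminalized family is isotrivial, and in fact so is the original family. I would obtain the latter by applying the same period-map argument (\cref{lemma: bir_ivariant}, \cref{lemma: hodge-trivial}, \cref{th: Ambro_isotriviality}) directly to $(\X,\B)$. This argument only uses the constancy of the CY period map on a dense set, and that constancy holds for $0$-equivalent fibers by \cref{lemma: bir_ivariant}, so smallness is not needed. Hence the fibers over a dense open subset of $S'$ are all log isomorphic to a single pair. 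The complement is a proper closed subset, so Noetherian induction gives a finite stratification of the closure of $\{s_\al\}$ with a constant isomorphism class on each stratum. This shows $|\Cc|$ is finite.

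The main obstacle is the step just described: justifying that $0$-equivalence alone, rather than a small birational map, forces the CY period map to be constant on the $s_\al$. This needs \cref{lemma: bir_ivariant} applied with index-1 covers chosen compatibly along the family, so that the Hodge bundle fibers are identified. A related difficulty is making the \cref{th: Ambro_isotriviality} hypothesis $B_S=0$ work, which requires the reduction that every fiber is klt, as in the proof of \cref{th: constructible}. If the direct approach fails, I would fall back on the terminalization route and compare polarizations via \cref{cor: criterion_pairs}.
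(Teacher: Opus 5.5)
Your proof is correct, but it finishes differently from the paper.

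\textbf{The paper's route.} It also reduces, via \cref{prop: bounded_subclass}, to one elementary family with $\{s_\al\}$ dense, and it also terminalizes that family. It then uses \cref{th: constructible} only as a black box. Constructibility of $\Iso^1$ for the terminalized family lets it shrink $S$ so that every closed fiber is $0$-equivalent to the members of $\Cc$. The base $S(\Ko)$ is then covered by the countably many constructible sets $\Iso_{(X_{\al_i},B_{\al_i})}(\X/S,\B)$, one for each class in $|\Cc|$. Since $\Ko$ is uncountable, one of these sets contains a dense open subset, and Noetherian induction concludes.

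\textbf{Your route.} You open up the proof of \cref{th: constructible} instead. You correctly observe that none of its steps uses smallness:
\begin{itemize}
\item klt-ness and $\nu_0=S$ via \cref{lemma: klt_0_families};
\item $B_S=0$ via inversion of adjunction;
\item constancy of $\Phi^\CY$ on a dense set via \cref{lemma: bir_ivariant}, which is stated for arbitrary $0$-equivalent pairs;
\item algebraicity of $(\Phi^\CY)^{-1}(\star)$, then \cref{lemma: hodge-trivial}, the canonical bundle formula, and \cref{th: Ambro_isotriviality}.
\end{itemize}
So the argument applies directly to $(\X,\B)$ with the dense set $\{s_\al\}$ of $0$-equivalent fibers. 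It gives a single isomorphism class over a dense open subset of each component, and Noetherian induction finishes.

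\textbf{What each approach buys.} On your route, the terminalization in families and the appeal to \cref{prop: flops_0-equivalent} become redundant. Your fallback via terminalization also works: triviality of the terminalized family forces constancy of the contracted fibers, since the contraction is determined by a numerical class. Your route also avoids two things the paper needs at the end: the countability of $|\Cc|$, and the requirement that every closed point of the shrunken base carries a fiber in $|\Cc|$. The paper's route has the advantage of quoting \cref{th: constructible} as stated rather than re-running its period-map argument.

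The obstacle you flag, choosing the index-1 covers compatibly, is already handled by \cref{const: main} together with \cref{lemma: bir_ivariant}.
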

	\begin{proof}
		According to \cref{prop: bounded_subclass}, there exists finitely many projective elementary families $(\X^{(j)}/S^{(j)}, \B^{(j)})$ such that for every pair $(X_\alpha, B_\alpha)$ in $\Cc$ there is an index $j$ together with a closed point $s_\alpha\in S^{(j)}$, and an isomorphism $\varphi_\al: (X_\al, B_\al) \to (\X_{s_\al}, \B_{s_\al})$. Without loss of generality, we may assume there is a single family $(\X/S, \B)$, and the set $\{s_\al: \alpha\in I\}$ is dense in $S$. By Noetherian induction on closed subsets $S^\prime\subseteq S$ such that $\{s_\al: \al\in I\}\cap S^\prime$ is dense in $S^\prime$, it is sufficient to show that the family $(\X/S, \B)$ is isotrivial over an open dense subset.
		
		It follows from \cref{lemma: klt_0_families} that $(\X, \B)$ is a klt log pair after shrinking the base. Let $t: (\X^\prime, \B^\prime) \to (\X, \B)$ be a $\Q$-factorial terminalization \cite[Corollary 1.4.3]{BCHM10}, where $K_{\X^\prime} + \B^\prime = t^*(K_\X + \B)$. Then for every $\alpha\in I$ the induced morphism $t_{s_\alpha}: (\X^\prime_{s_\al}, \B^\prime_{s_\al})\to (\X_{s_\al}, \B_{s_\al})$ is a $\Q$-factorial terminalization. By \cref{prop: flops_0-equivalent}, we obtain $\{s_\al: \al\in I\} = \text{Iso}_{(X, B)}^1(\X^\prime/S, \B^\prime) \cap S(\Ko).$ Therefore, we shall suppose that $$S = \text{Iso}_{(X, B)}^1(\X^\prime/S, \B^\prime)$$ in accordance with \cref{th: constructible}.
		
		Finally, we show that boundedness of $\Cc$ together with \cref{th: constructible} implies finiteness of $|\Cc|$ as the base field is uncountable. For every $\Cc_i\in |\Cc|$ choose a representative $(X_{\al_i}, B_{\al_i})$ in $\Cc_i$. Then $S = \bigsqcup_i \Iso_{(X_{\al_i}, B_{\al_i})}(\X/S, \B)$. As the set $|\Cc|$ is at most countable, one of the constructible sets $\Iso_{(X_{\al_i}, B_{\al_i})}(\X/S, \B)$ includes a dense open subset. This concludes the proof.
	\end{proof}
	
	 Let $X$ be a projective variety. Recall that $\text{NS}(X) = \mathrm{Pic}(X)\mathbin{/}\!\equiv$ denotes the group of invertible sheaves on $X$ modulo the numerical equivalence. 
	\begin{theorem}
		\label{th: finiteness of orbits}
		Fix $N\in \mathbb{Z}_{>0}$. Let $(X, B)$ be a projective klt $0$-pair, $B$ is a $\Q$-divisor. Then the number of orbits for the natural action of $\textnormal{Aut}(X, B)$ on the set $$\{[A]\in \textnormal{NS}(X): A^{\dim X} = N, \ A \text{ is ample}\}$$ is finite (whenever the defined set is non-empty).
	\end{theorem}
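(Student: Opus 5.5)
The idea is to put all polarizations of volume $N$ on $X$ into one bounded family, and then count them using the constructibility results of \cref{th: constructible} and the criterion \cref{th: constructibility_criterion}, in the same spirit as the proof of \cref{th: main}.

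\textbf{Step 1 (reduction to very ample).} Let $d=\dim X$ and let $(X,B)$ be klt with $K_X+B\equiv 0$. For an ample $A$, the bundle $A-(K_X+B)\equiv A$ is ample. The effective basepoint-free and very-ampleness results for klt pairs (Kollár's effective Matsusaka-type bounds, or the boundedness results in \cite{Bir19}) then give an integer $m=m(d,N)$, independent of $A$, such that $mA$ is very ample for every ample $A$ with $A^d=N$. Since $(mA)^d=m^dN$, these polarizations are bounded.

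\textbf{Step 2 (parametrize polarized copies of $(X,B)$).} Consider the class of polarized pairs $(X,B;mA)$, where $A$ runs over ample bundles with $A^d=N$. By \cref{prop: bounded_subclass} it sits inside finitely many projective elementary families $(\X/S,\B)$ carrying an ample $\A$ with $\A_s\cong \varphi_s^{-1*}(mA)$. We only need the closed points lying in $\Iso_{(X,B)}(\X/S,\B)$. By \cref{th: constructible} this set is constructible, so after a stratification of $S$ we may assume every fiber is log isomorphic to $(X,B)$. Also note that $\textnormal{NS}(X)$ is torsion-free modulo a finite group, so $[mA]$ determines $[A]$ up to finitely many choices; it therefore suffices to count orbits of the classes $[mA]$.

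\textbf{Step 3 (finitely many orbits).} Apply \cref{lemma: glueing_isomorphisms}: each point $s_0$ has an étale neighbourhood $U$ with $(\X,\B)\times_S U\cong (X,B)\times U$. Pulling $\A$ back through this trivialization gives a family of line bundles on $X$ parametrized by the connected base $U$. These bundles are algebraically equivalent, hence numerically equivalent. So on each connected étale chart, all points give the same class in $\textnormal{NS}(X)$, modulo the choice of identification. The choice of identification $\varphi_s$ is exactly the $\textnormal{Aut}(X,B)$-ambiguity. Because $S$ is Noetherian, finitely many charts cover it, and hence there are finitely many orbits. This is precisely the argument behind $(3)\Rightarrow(\star)$ in \cref{th: constructibility_criterion}.

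\textbf{Main obstacle.} The hard step is Step 1. It requires a uniform $m$ making $mA$ very ample, together with the bounded-polarization inequality for $B_{\mathrm{red}}$. The second point is where the hypothesis on $B$ enters. $B_{\mathrm{red}}\cdot (mA)^{d-1}$ is bounded because $B$ has fixed $\Q$-coefficients and $B\equiv -K_X$; this bounds $B\cdot A^{d-1}=-K_X\cdot A^{d-1}$. To control $K_X\cdot A^{d-1}$, I would first try the fact that it vanishes numerically on the K-trivial part, i.e. use $K_X+B\equiv 0$ and bound $K_X\cdot A^{d-1}$ via a terminalization. If uniform very ampleness fails for singular $X$, I would pass to a $\Q$-factorial terminalization as in \cref{th: main} and work there.
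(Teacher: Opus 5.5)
Your architecture matches the paper's. You use uniform very ampleness of $mA$, then \cref{prop: bounded_subclass} to place all $(X,B;mA)$ in finitely many elementary families, then \cref{th: constructible} to restrict to $\Iso_{(X,B)}(\X/S,\B)$. You finish with the \'etale-local trivialization of \cref{lemma: glueing_isomorphisms} and the fact that algebraically equivalent bundles are numerically equivalent. That last step is exactly the $(1)\Rightarrow(3)\Rightarrow(\star)$ part of \cref{th: constructibility_criterion} that the paper invokes. Steps 2 and 3 are fine as written.

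The gap is the one you flag but do not close. \cref{prop: bounded_subclass} needs a uniform bound on $B_{\mathrm{red}}\cdot(mA)^{d-1}$, hence on $B\cdot A^{d-1}=-K_X\cdot A^{d-1}$, over all ample $A$ with $A^d=N$. Your suggestions do not supply it:
\begin{itemize}
\item Using $K_X+B\equiv 0$ only converts $-K_X\cdot A^{d-1}$ back into $B\cdot A^{d-1}$, so it is circular.
\item A terminalization does not help: the pullback of $A$ is no longer ample, and the same quantity reappears.
\item The bound cannot come from $A^d=N$ and positivity alone. Take an abelian surface on whose $\mathrm{NS}$ the automorphism group acts through an infinite group, and fix an ample $H$. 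By the Hodge index theorem, $H\cdot A$ is unbounded along any infinite orbit of ample classes with $A^2=N$.
\end{itemize}

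What is special about $-K_X$ is that it appears in the Hilbert polynomial; this is the missing idea. Since $mA$ is very ample with $(mA)^d=m^dN$, one has $h^0(X,mA)\le m^dN+d$. So the embeddings of $X$ by $|mA|$ are subvarieties of bounded degree in a fixed $\Pp^{m^dN+d}$, and by Hilbert--Chow finiteness the polynomials $t\mapsto\chi(X,mtA)$ lie in a finite set. Now apply Riemann--Roch on a resolution $g\colon Y\to X$, using $\chi(Y,g^*L)=\chi(X,L)$ (klt singularities are rational) and $g_*K_Y=K_X$. The $t^{d-1}$-coefficient equals $-m^{d-1}K_X\cdot A^{d-1}/(2(d-1)!)$, so $B\cdot A^{d-1}$ takes only finitely many values. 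Then $B_{\mathrm{red}}\cdot A^{d-1}\le B\cdot A^{d-1}/\min_i b_i$, since each $B_i\cdot A^{d-1}\ge 0$. With this inserted, your argument goes through and coincides with the paper's proof.
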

	\begin{proof}
		Set $d = \dim X$. We use Big Matsusaka's type theorem to reduce the problem to a set of very ample line bundles on $X$. Let $\text{Amp}_{\mathbb{Z}}(X) \subseteq \text{NS}(X)$ be a subset of all classes of ample invertible sheaves.  According to the Effective Base Point freeness \cite[Theorem 1.1]{Kol93}, there exists $m=m(d)\in \Zi_{>0}$ such that $A^{\otimes m}$ is base point free for every $[A]\in \text{Amp}_{\Zi}(X)$. Furthermore, there exists $M=M(d)\in \Zi_{>0}$ such that $A^{\otimes M}$ is very ample for every $[A]\in \text{Amp}_{\Zi}(X)$ (cf. \cite[Lemma 1.2]{Kol93} and \cite[Theorem]{Wil80}). Let $\{A_\al: \al \in I\}$ be a set representing all classes in $$\{[A]\in \textnormal{Amp}_{\mathbb{Z}}(X): A^{d} = N\}.$$
		
		Next, we show that the intersection numbers $B\cdot (A_\al)^{d-1}$ are bounded. By the standard Hilbert-Chow schemes argument, there exist finitely many polynomials $P_i(t)\in \Q[t]$ such that for every $\alpha\in I$ we have $\chi(X, A_\al^{\otimes Mt}) = P_{i_\al}(t)$ for some $i_\al$. Without loss of generality, we suppose that there is a single polynomial $P(t)$ such that $\chi(X, A_\al^{\otimes Mt}) = P(t)$ for all $\al\in I$. Let $g: Y \to X$ be a resolution of singularities. Applying the Hirzebruch--Riemann--Roch formula \cite[Theorem 4.1]{Har77} to $g^*A_\al^{\otimes Mt}$ on the smooth projective variety $Y$  we obtain:
		\begin{equation*}
			\chi(Y, g^*A_\al^{\otimes Mt}) = \frac{(g^*A_\al)^d}{d!}(Mt)^d - \frac{K_Y\cdot (g^*A_\al)^{d-1}}{2(d-1)!}(Mt)^{d-1} + O(t^{d-2}).
		\end{equation*}
		Note that $\chi(Y, g^*A_\al^{\otimes Mt})=\chi(X, A_\al^{\otimes Mt})$ because klt singularities are rational \cite[Theorem 5.22]{KM98}. Moreover, it follows from the projection and restriction formulas \cite{Ful98} that
		\begin{equation*}
			P(t) = \frac{(A_\al)^d}{d!}(Mt)^d - \frac{K_X\cdot (A_\al)^{d-1}}{2(d-1)!}(Mt)^{d-1} + O(t^{d-2}).
		\end{equation*}
		Put $N = 2P^{(d-1)}(0)/M^{d-1}$. Hence the intersection numbers $B\cdot(A_\alpha)^{d-1}=-K_{X_\alpha}\cdot A_\al^{d-1} = N$ are the same for all $\al\in I$.
		
		It remains to bound $(B)_{\mathrm{red}}\cdot A_\alpha$. Set $b=\min\{b_i: b_i\neq 0\}$, and write $B=\sum_{i=1}^l b_{i}B_{i}$. To be definite, we can assume that $b_{i}\neq 0$ for all $i$.  Then
		$$
		b\,B_{i}\cdot (A_\al)^{d-1}\ \le\ \sum_{i} b\,B_{i}\cdot (A_\alpha)^{d-1}\ \le\ \sum_i b_{i}\, B_{i}\cdot (A_\al)^{d-1}\ = B \cdot (A_\al)^{d-1} \le\ N.
		$$
		Therefore, $B_{\mathrm{red}}\cdot (A_\alpha)^{d-1} \le l\,\dfrac{N}{b}$ because $\#\supp (B) = l$.
		
		Now, we use constructibility to conclude that the set of triples $(X, B; A_\al)$ is bounded. More precisely, according to \cref{prop: bounded_subclass} there exists finitely many projective elementary families $(\X^{(j)}/S^{(j)}, \B^{(j)})$ together with very ample line bundle $\A^{(j)}$ on $\X^{(j)}/S^{(j)}$ such that for every $\alpha \in I$ there is an index $j$ together with a closed point $s_\alpha\in S^{(j)}$, and an isomorphism $\varphi_{s_\al}\colon (X, B; A_{\al}^{\otimes M})\to (\X_{s_\al}, \B_{s_\al}; \A^{(j)}_{s_\al})$.  Without loss of generality, we may assume there is a single triple $(\X/S, \B; \A)$. Then the subset $\Iso_{(X, B)}(\X/S, \B)\subseteq S$ is constructible due to \cref{th: Fano_0_General}. Therefore, the theorem follows from the criterion of constructibility, i.e. \cref{th: constructibility_criterion}.
	\end{proof}
	\begin{remark}
		We expect that a similar statement could be proven for the natural action of $\Aut(X, B)$ on the set of big and movable Cartier divisors of fixed volume. 
	\end{remark}
	
	\subsection{Counterexamples}
	\label{sec: counterexamples}
	Suppose $\X/S$ is a projective family of varieties, and $X$ is a smooth projective variety. First, we show that one cannot expect that the set $\Iso_X(\X/S)$ is constructible if $X$ is neither a Mori model or a minimal model. The following example is based on \cite[Section 1.6]{Huy18}.
	\begin{example}
		\label{ex: blowup_k3}
		Let $S$ be a (smooth) projective K3 surface such that $\text{Aut}(X)$ is infinite and countable. In \cite{Weh88}, one can find examples of projective K3 surfaces with Picard number 2 such that the automorphism group is isomorphic to $\Zi/2 * \Zi/2$. Consider the trivial family $p_2: S\times_\Ko S \to S$. Let $\X = \text{Bl}_\Delta(S\times_{\Ko} S)$ be the blow-up of the diagonal $\Delta \subset S\times_\Ko S$, $\sigma: \X \to S\times_\Ko S$ the corresponding morphism. Put $f =p\circ\sigma: \X \to S$. Choose a closed point $s_0\in S$ such that the set $\{\varphi(s_0): \varphi\in \text{Aut}(X)\}$ is infinite. Let $X = \X_{s_0}$. Since $S$ is a minimal surface, $s\in \Iso_X(\X/S)$ if and only if there is an automorphism $\varphi\in \text{Aut}(X)$ such that $\varphi(s_0) = s$. Hence, $\text{Iso}_X(\X/S)$ is an infinite countable set that is not constructible.
		
		Let us remark that $f: (\X, -\N) \to S$ is a lc-trivial fibration with the trivial moduli part, and the generic fiber is klt. Here, $\N$ is the exceptional divisor for $\sigma$. Hence, the assumption in \cref{th: Ambro_isotriviality} on the effectiveness of the boundary can not be relaxed.
	\end{example}
	
	Secondly, we show that a set $\Iso_X(\X/S)$ does not have to be constructible in the analytic category even for minimal models.
	
	\begin{example}
		\label{ex: k3}
		Let $\Lambda_{\text{K3}} = E_8(-1)^{\oplus 2}\oplus U^{\oplus 3}$ be the K3 lattice. By \cite[Chapter 6, Section 3]{Huy16}, there exists a universal family $f: \X\to S$ (together with a marking $R^2f_*\underline{\Zi} \to \underline{\Lambda_{\textnormal{K3}}}$) of all smooth analytic marked K3 surfaces. The base $S$ is a 20-dimensional non-Hausdorff complex manifold. Let $\Phi: S\twoheadrightarrow \D^\CY\subseteq \Pp({\Lambda_{\text{K3}}}\otimes_\Zi \Ko)$ be the period map, and $s_0$ be a very general point $S$, i.e. the orbit of $\Phi(s_0)$ under the natural action of $O(\Lambda_{\textnormal{K3}})$ is infinite. Set $X = \X_{s_0}$. By the Global Torelli Theorem for K3 surfaces, $s\in \Iso_X(\X/S)$ if and only if there is $\theta\in O(\Lambda_{\textnormal{K3}})$ such that $\Phi(s) = \theta \cdot \Phi(s_0)$. Hence, $\Iso_{X}(\X/S)$ is an infinite countable set that is not constructible. 
	\end{example}
	
	Finally, we provide an example of a projective lc $0$-pair $(X, B)$ showing that \cref{intro-theorem: orbits}, \cref{intro-theorem: finiteness}, \cref{intro-theorem: const} could not be generalized to the lc case. In fact, the construction is quite similar to \cref{ex: k3}.  This counterexample came out of a conversation with Philip Engel.
	\begin{example}
		\label{ex: anticanonical}
		We refer a reader to \cite{GHK15} for relevant definitions. Let $C\subset \Pp^2_\Ko$ be a nodal cubic. Then $(\Pp^2_\Ko, C)$ is a projective lc $0$-pair. Choose $9$ general regular points $p_0, p_1, \dots, p_8$ on $C$, that is, no three of $p_i$ are collinear, no six of $p_i$ on a conic. Then the rational surface $X=\text{Bl}_{p_0, p_1, \dots p_8} \Pp^2_\Ko$ includes no $(-2)$-curves. Let $B$ be the strict transform of $C$ to $X$. Then $(X, B)$ is a \textit{generic} Loijenga pair \cite[Definition 1.4]{GHK15}. Put $$S = \text{Hom}(\text{Pic}(X), \text{Pic}^0(B)), \ \text{Adm}_X = \{\theta\in \Aut(\text{Pic}(X)): \theta(\text{Nef}(X)) = \text{Nef}(X), \ \theta(B) = B\}.$$
		
		Recall that $\text{Pic}(X)$ is torsion free, so $S\approx \mathbb{G}_m^{10}$. By \cite[Construction 5.7]{GHK15} there exists a projective elementary family $(\X/S, \B)$ together with a marking $\mu: \text{Pic}(X) \to \text{Pic}(\X)$ preserving the boundary classes. Moreover, $(\X/S, \B)$  is universal in the following sense: for every closed point $s\in S$, the marked period point of $(\X_s, \B_s)$ is $s$. To be definite, we put $(X, B) = (\X_{s_0}, \B_{s_0})$ for some closed point $s_0\in S$. There is a map $\Phi: S\twoheadrightarrow \text{Hom}(B^\perp, \text{Pic}^0(B))$ onto the \textit{period domain of Loijenga pairs}, where $B^\perp = \{[D]\in \text{Pic}(X): [D]\cdot B = 0\}$. By the Global Torelli Theorem for Loijenga pairs \cite[Theorem 1.8]{GHK15}, we have that $s\in \Iso_{(X, B)}(\X/S, \B)$ if and only if there is $\theta \in\text{Adm}_X$ such that $\Phi(s) = \theta\cdot \Phi(s_0)$. We claim that the group $\text{Adm}_X$ is infinite. If this is the case, then the set $\Iso_{(X, B)}(\X/S, \B)$ is a countable union of proper subsets that is not constructible.
		
		Now, we prove the claim. Let $E\subset \Pp^2_\Ko$ be a smooth cubic intersecting $C$ in distinct points $p_0^\prime, p_1^\prime, \dots, p_8^\prime$ such that each $p_i^\prime$ is non-torsion on $E$. Then the rational surface $X^\prime = \text{Bl}_{p_0^\prime, p_1^\prime, \dots, p_8^\prime} \Pp^2_\Ko$ admits an elliptic fibration $f: X^\prime \to \Pp^1_\Ko$ such that $\text{rk}\, \text{MW}(X^\prime/\Pp^1_\Ko) \ge 1$. Note that the fibration $X^\prime/\Pp^1_\Ko$ is relatively minimal as $K_{X^\prime} = -B^\prime$, here $B^\prime $ is the strict transform of $C$ to $X^\prime$. Hence, every automorphism of the generic fiber of $f$ gives a non-trivial automorphism for $X^\prime$ over $\Pp^1_\Ko$. In particular, the group $\text{Aut}(X^\prime, B^\prime)$ is infinite. Now, we choose a marking $\text{Pic}(X^\prime) \to \text{Pic}(X)$. By construction of the universal family, there is a point $s^\prime \in S$ such that $(X^\prime, B^\prime) = (\X_{s^\prime}, \B_{s^\prime})$. This implies that $(X^\prime, B^\prime)$ is deformation equivalent to $(X, B)$. Let $\text{Hodge}_{X^\prime}\subseteq \text{Adm}_X$ denote the stabilizer of the period point $s^\prime$, then we have an exact sequence \cite[Theorem 5.1]{GHK15}:
		\begin{equation*}
			1 \to \text{Hom}(N^\prime, \mathbb{G}_m) \to \text{Aut}(X^\prime, B^\prime)\to \text{Hodge}_{X^\prime}/W_{X^\prime} \to 1,
		\end{equation*}  
		where $N^\prime = \text{coker} \left(\text{Pic}(X^\prime) \to \Zi\right)$, $[D]\mapsto [D]\cdot B^\prime$, and $W_{X^\prime}$ is the Weyl group. It is clear that $N^\prime = 0$. Then $\text{Hodge}_{X^\prime}/W_{X^\prime}$ is infinite. Hence, the group $\text{Adm}_X$ is infinite, as claimed.
		
		Thus, the natural analogue of \cref{intro-theorem: const} does not hold for $(X, B)$. According to \cref{th: constructibility_criterion}, the analogue of \cref{intro-theorem: orbits} does not hold for $(X, B)$ as well. Note that all fibers $(\X_s, \B_s)$ are $0$-pairs that $0$-equivalent to $(\Pp^2_\Ko, C)$. In addition, the fibers admit a bounded polarization. It follows that the analogue of \cref{intro-theorem: finiteness} cannot hold for $(X, B)$.
	\end{example}
	\begin{remark}
		In addition, the failure of \cref{intro-theorem: orbits} (resp. \cref{intro-theorem: finiteness}) implies that the analogue of \cref{conj: KMT-Arithmetic} (resp. \cref{conj: KMT-Geometric}) does not hold. Nevertheless, the modified \cref{conj: KMT-Arithmetic} holds for $(X, B)$, that is, the natural action of $\text{Adm}_X$ on $\text{Nef}^e(X)$ admits a rational polyhedral fundamental domain \cite[Theorem 1.1]{Li25}.
	\end{remark}
		The example below is mentioned in \cite{Tot10} as an example of failure of the cone conjecture for lc $0$-pairs. We show that it also produces an example of failure of \cref{intro-theorem: orbits}, \cref{intro-theorem: finiteness}, \cref{intro-theorem: const} for plt $0$-pairs. We say a log pair $(X, B)$ is \textit{plt} if $\text{dis}(X, B) > -1 $.
	\begin{example}
		Let $C \subset \Pp^2_\Ko$ be a unique smooth cubic passing through 9 general points $p_i \in \Pp^2_\Ko$. Then $(\Pp^2_\Ko, C)$ is a plt $0$-pair. By \cite{Nag60}, the rational surface $X=\text{Bl}_{p_0, p_1, \dots p_8}$ includes infinitely many $(-1)$-curves. Furthermore, $\text{Aut}(X) = \{\text{id}\}$ due to \cite{Giz81}. Let $B$ be the strict transform of $C$ to $X$. Then $(X, B)$ is a plt $0$-pair. For each 3 distinct points $p_i, p_j, p_k$,there is a quadratic transformation $\varphi_{ijk}: \Pp^2_\Ko\dashrightarrow \Pp^2_\Ko$ centered at these points. We shall associate a linear transformation $\widetilde{\varphi_{ijk}}$ on $N^1_\R(X)$ to $\varphi_{ijk}$ as follows. Let $\sigma_{ijk}: \text{Bl}_{p_ip_jp_k} \Pp^2_\Ko\to \Pp^2_\Ko$ be the blow-up of $\Pp^2_\Ko$ at these points, and $E_i, E_j, E_k$ be the exceptional $(-1)$-curves. By $l_i\subset \text{Bl}_{p_ip_jp_k} \Pp^2_\Ko$ we denote the strict transform of the curve passing through $p_j, p_k$. The curves $l_j, l_k$ are defined similarly. Then the composition $\pi_{ijk} = \varphi_{ijk}\circ\sigma_{ijk}$ is a morphism contracting $(-1)$-curves $l_i, l_j, l_k$. Hence, in $\text{NS}(\text{Bl}_{p_ip_jp_k})$ there are two bases $(E_{ijk}, E_i, E_j, E_k)$ and $(l_{ijk}, l_{i}, l_j, l_k)$, where $E_{ijk}$ (resp. $l_{ijk}$) is the pullback of a general line in $\Pp^2_\Ko$ under $\sigma_{ijk}$ (resp. $\pi_{ijk}$). The change of basis is provided by the following matrix:
		\begin{equation*}
			\widetilde{\varphi_{ijk}} = \begin{pmatrix}
				2 & 1 & 1 & 1\\
				-1 & 0 & -1 & -1\\
				-1 & -1 & 0 & -1\\
				-1 & -1 & -1 & 0
			\end{pmatrix}.
		\end{equation*}
		 Since $\text{NS}(X) = \text{NS}(\text{Bl}_{p_ip_jp_k} \Pp^2_\Ko) \oplus \dots$, the linear transformation $\widetilde{\varphi_{ijk}}$ trivially extends to a linear transformation on $\text{NS}(X)$. Note that $\widetilde{\varphi_{ijk}}$ is the reflection with respect to the $(-2)$-vector $\alpha_{ijk} = E_{ijk}-E_i-E_j-E_k$. Hence, $\widetilde{\varphi_{ijk}}$ preserves the intersection form. Moreover, the class of $B = -K_X$ is invariant under $\widetilde{\varphi_{ijk}}$.
		
		Let $W\le \text{O}(1,9)\le \text{Gl}_{10}(\Zi)$ be the group generated by all $\varphi_{ijk}$. The group $W$ is known to be infinite \cite{Dol83}. We shall use the group $W$ to construct infinitely many polarized pairs $(X, B; A_i)$ such that $A_i$ is very ample, $A_i^2$ is bounded, and $A_i \not\equiv A_j$ for all $i\neq j$. According to \cite[Theorem 1.3]{Dur25} the cone $\text{Nef}^e(X)$ admits a rational polyhedral fundamental domain $\Pi\subset \text{Nef}^e(X)$ for the action $W$. Choose an ample divisor $A\in \Pi$. Let $I$ be an index set such that $\{A_i\}_{i\in I} = \{\varphi(A): \varphi\in W\}$. Then $(MA_i)^2 = M^2A^2$, and $B\cdot A_i = B\cdot A$ for all $i\in I$ as $W\le \text{O}(1,9)$. By the Effective Base Point freeness (e.g. \cite[Theorem 1.1]{Fuj09}) for lc log pairs, there is an integer $M\in \Zi_{>0}$ such that $MA_i$ is very ample for all $i\in I$. According to \cref{prop: bounded_subclass}, there exists an projective elementary family $(\X/S, \B)$ together with very ample invertible sheaf $\A$ on $\X/S$ such that for infinitely many $i\in I$ there are closed points $s_i$: $(\X_{s_i}, \B_{s_i}, \A_{s_i})\approx (X, B, MA_i)$. Recall that $\Aut(X, B)$ is trivial. Thus, the set $\Iso_{(X, B)}(\X/S, \B)$ is not constructible in accordance with the criterion of constructibility (cf. \cref{cor: criterion_pairs}). As in \cref{ex: anticanonical}, \cref{intro-theorem: orbits}, \cref{intro-theorem: finiteness}, \cref{intro-theorem: const} could not be generalized to the case of plt $0$-pairs.
	\end{example}
	\begin{remark}
		In \cite{Tot10}, there is an example of lc $0$-pair $(X, 0)$ for which the analogue of cone conjecture does not hold. The example is based on \cite[Example 6.10]{DZ01}. We expect that the analogues of  \cref{intro-theorem: orbits}, \cref{intro-theorem: finiteness}, \cref{intro-theorem: const} does not for $(X, 0)$ as well.
	\end{remark}
	
	We finish this paper by providing an example illustrating a behavior of isomorphic fibers over  fields that are not algebraically closed.
	\begin{example}
		Let $k$ be a field, $p$ a prime number. Suppose either $k$ is a finite extension of  $k_0\in \{\Q, \mathbb{F}_p(t)\}$ or $k =\R$. Let $n\ge 2$ be an integer, coprime with $\text{char}\, k$. Then $X = \text{Proj}_k\,\frac{k[x,y,z]}{(x^n+y^n-z^n)}$ is a smooth integral projective scheme (over $k$), and $\dim X  = 1$. Moreover, $X\otimes_k \overline{k}$ is of general type (resp. Fano, Calabi--Yau) if $n\ge4$ (resp. $n=2, n=3$). In addition, $X(k)\neq \emptyset$. Set $$  \X  =\text{Proj}_k\, \frac{k[s,x,y,z]}{(x^n+y^n-sz^n)}, \ S =\text{Spec}_k\, k[s].$$ Let $f:\X\to S$ be the natural projection. For every point $s\in S(k)$ we have $\X_s \approx X$ if and only if $\sqrt[n]{s}\in k$. Then $\{s\in S(k): \X_s \approx X\} \subset S$ is a dense subset such that the compliment is also dense in $S(k)$. It is not constructible. Nevertheless, the set $\{s\in S(k): \X_s\otimes_k \overline{k} \approx X\otimes_k \overline{k}\} = \{s\in S(k): s\neq 0\}$ is constructible (open). In fact, this follows from \cref{th: constructibility_criterion}.
	\end{example}

	\newpage
	\appendix
	\section{}{\label{Appendix Cone_Conjecture}}
	In this Appendix, we shall explore an interconnection between the Kawamata--Matsuki conjecture on the finiteness of minimal models and the Kawamata--Morrison--Totaro cone conjecture. Throughout, the base field is $\Ko$. We say $X$ is a \textit{minimal model} if it is a proper $\Q$-factorial variety with terminal singularities such that the canonical class $K_X$ is nef. The following conjecture was proposed by Kawamata and Matsuki \cite[Conjecture 12.3.6]{Mat02}.
	
	\begin{conjecture}
		\label{conj: KM-finiteness_original}
		The number of isomorphism classes of projective minimal models in a fixed birational class is finite.
	\end{conjecture}
	
	This conjecture was generalized (see \cref{conj: KM-finiteness} below) to the case of log pairs in \cite{Ser25}, and proven for log surfaces (with non-zero boundary) independently of \cite{Tot10}, where the cone conjecture is established in full generality in dimension $2$. Let us remark that \cref{conj: KM-finiteness} is proven for wlc models of general type \cite[Theorem E]{BCHM10}. Meanwhile, even \cref{conj: KM-finiteness_original} is an open problem in dimension $\ge 3$.
	
	\begin{conjecture}[Finiteness of minimal models]
		\label{conj: KM-finiteness}
		The number of (log) isomorphism classes of projective wlc klt models in a fixed $0$-class is finite.
	\end{conjecture}
	
	From now and on, let $(X, B)$ be a fixed projective wlc model (see \cref{def: 0-pair}) of dimension $d\in \Zi_{\ge 2}$. 
	We shall assume that the \textit{abundance conjecture} holds for $(X, B)$, that is, there exists a normal variety $Y$, and a projective contraction $f: X\to Y$ such that $K_{X} + B \sim_{\R, Y} 0$. Denote $f$ by $X/Y$. We also assume that the \text{Abundance conjecture} holds for all projective wlc models of $(X, B)$.
	
	Let $D_1, D_2$ be Cartier divisors on $X$. Then we write $D_1\equiv_Y D_2$ if $D_1\cdot C = D_2\cdot C$ for every curve $C\subset X$ contracted by $f$. Let us define an $\R$-vector space $N^1_\R(X/Y)$: $$N^1_\Zi(X/Y) = \textnormal{NS}(X) / \equiv_Y, \ N^1_\Q(X/Y) = N^1_\Zi(X/Y)\otimes_\Zi \Q, \ N^1_\R(X) = N^1_\Q(X/Y) \otimes_\Q \R.$$ There are several natural convex cones in $N^1_\R(X/Y)$ that capture geometry of $X/Y$. Recall that a Cartier divisor on $X$ is called \textit{effective} over $Y$ (resp. \textit{nef} over $Y$, \textit{movable} over $Y$) if $f_*\Os(D)\neq 0$ (resp. if $D\cdot C \ge 0$ for every curve $C\subset X$ contracted by $f$, if $\text{supp}\left(\text{coker}\left(f^*f_*\Os_X(D)\to \Os_X(D)\right)\right)$ has codimension $\ge 2$). By $\textnormal{Eff}(X/Y)\subset N^1_\R(X/Y)$ denote the convex cone generated by all classes of effective Cartier divisors over $Y$. Let $\Amp(X/Y) \subset N^1_\R(X/Y)$ (resp. $\Mov(X/Y)\subset N^1_\R(X/Y)$) be the convex cone generated by all ample (resp. movable) classes over $Y$. Note that the closure $\Nef(X/Y):= \overline{\Amp(X/Y)}$ (in the Euclidean topology on $N^1_\R(X/Y)$) is the convex cone generated by all classes of nef over~$Y$ $\R$-Cartier $\R$-divisors due to Kleiman's criterion. Put
	\begin{equation*}
		\text{Nef}^e(X/Y) = \Nef(X/Y)\cap \textnormal{Eff}(X/Y), \ \overline{\Mov}^e(X/Y) = \overline{\Mov(X/Y)}\cap \textnormal{Eff}(X/Y).
	\end{equation*}
	By $\Mov^+(X/Y)$ denote the convex hull of the set $\overline{\Mov(X/Y)}\cap N^1_\Q(X/Y) \subset N^1_\R(X/Y)$. The cone $\text{Nef}^+(X/Y)$ is defined similarly. Now, we are ready to state the \textit{Kawamata--Morrison--Totaro} cone conjecture, which consists of two parts. We refer an interested reader to \cite{GLSW26} and the references therein for an overview, history, and recent progress.
	\begin{conjecture}[Arithmetic cone conjecture]
		\label{conj: KMT-Arithmetic}
		Suppose $(X, B)$ is klt. Then
		\begin{itemize}
			\item 
				There is a rational polyhedral cone $\Pi\subseteq \textnormal{Nef}^e(X/Y)$ such that
				\begin{equation*}
					\textnormal{Aut}(X/Y, B) \cdot \Pi = \textnormal{Nef}^e(X/Y) = \textnormal{Nef}^+(X/Y),
				\end{equation*}
				and for every $\varphi \in \textnormal{Aut}(X, B)$ we have  $\textnormal{Int}(\Pi)\cap \textnormal{Int}(\varphi^*\Pi) = \emptyset$ unless $\varphi^* = \textnormal{id}$.\\
			\item
				There is a rational polyhedral cone $\Pi^\prime\subseteq \overline{\textnormal{Mov}}^e(X/Y)$ such that
				\begin{equation*}
					\textnormal{PsAut}(X/Y, B) \cdot \Pi^\prime = \overline{\textnormal{Mov}}^e(X/Y) = \overline{\textnormal{Mov}}^+(X/Y),
				\end{equation*}
				and for every $\varphi \in \textnormal{PsAut}(X, B)$ we have  $\textnormal{Int}(\Pi^\prime )\cap \textnormal{Int}(\varphi^*\Pi^\prime) = \emptyset$ unless $\varphi^* = \textnormal{id}$.\\
		\end{itemize}
	\end{conjecture}	
	Suppose $X$ is $\Q$-factorial. A small $\Q$-factorial modification (SQM) for $X/Y$ is a small birational map $\alpha: X \dashrightarrow X^\prime$ over $Y$ such that $X^\prime$ is a projective $\Q$-factorial variety over $Y$. Note that $\alpha_*: N^1_\Zi(X/Y) \to N^1_\Zi(X^\prime/Y)$ is an isomorphism. Moreover, we have $\alpha_* \Mov^e(X/Y) = \Mov^e(X^\prime/Y)$. Then we can define a chamber $\textnormal{A}^e(X^\prime/Y) = (\alpha^{-1})_* \Nef^e(X^\prime/Y) \subseteq \Mov^e(X/Y)$ corresponding to the SQM $X^\prime/Y$ for $X/Y$. If we assume the \textit{termination of flips} in relative dimension $\le d$, then we obtain the following:
	\begin{equation*}
		 \Mov^e(X/Y) = \bigcup_{\al: X\dashrightarrow X^\prime} \textnormal{A}^e(X^\prime/Y),
	\end{equation*}
	where $\al: X\dashrightarrow X^\prime$ runs over all distinct SQMs for $X/Y$.
	\begin{conjecture}[Geometric cone conjecture]
		\label{conj: KMT-Geometric}
		Suppose $(X, B)$ is klt, and $X$ is $\Q$-factorial. Then
		\begin{itemize}
			\item 
				The number of $\Aut(X/Y, B)$-equivalence classes of faces of the cone $\Nef^e(X/Y)$ corresponding to birational contractions or fiber space structures is finite.
			\item 
				The number of $\textnormal{PsAut}(X/Y, B)$-equivalence classes of chambers $\textnormal{A}^e(X^\prime/Y) \subseteq \Mov^e(X/Y)$ corresponding to the SQMs $\alpha: X\dashrightarrow X^\prime$ for $X/Y$ is finite.
		\end{itemize}
	\end{conjecture}
	\begin{proposition}
		Suppose $(X, B)$ is klt, $X$ is $\Q$-factorial. Suppose \cref{conj: KMT-Arithmetic} holds for $(X, B)$. Then \cref{conj: KMT-Geometric} holds for $(X, B)$.
	\end{proposition}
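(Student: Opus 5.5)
The plan is to show that each bullet of \cref{conj: KMT-Arithmetic} implies the corresponding bullet of \cref{conj: KMT-Geometric}. The common mechanism is that a rational polyhedral cone has only finitely many faces, and every face or chamber of the relevant cone can be moved by the group so that it meets the interior of $\Pi$ (resp. $\Pi'$).

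\emph{Nef cone.} Let $F\subseteq \Nef^e(X/Y)$ be a face corresponding to a birational contraction or a fiber space structure $g\colon X\to X_F$ over $Y$. Then $F = g^*\Nef^e(X_F/Y)$, and its relative interior contains a rational class $g^*H$ with $H$ ample on $X_F/Y$. Since $\textnormal{Aut}(X/Y,B)\cdot\Pi=\Nef^e(X/Y)$, there is $\varphi$ with $\varphi^*(g^*H)\in\Pi$. The class $\varphi^*g^*H$ lies in the relative interior of the face $\varphi^*F$, and the face of $\Nef^e(X/Y)$ containing a given class in its relative interior is unique. Hence $\varphi^*F$ is determined by the smallest face of $\Pi$ containing $\varphi^*g^*H$. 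This face of $\Pi$ is one of finitely many faces of the polyhedral cone $\Pi$.

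One point needs care. A face of $\Pi$ may lie in the relative interiors of several faces of $\Nef^e$. However, all points in the relative interior of a face $G$ of $\Pi$ lie in the relative interior of one and the same face of the convex cone $\Nef^e(X/Y)$: a face of $\Nef^e$ containing one relative-interior point of the convex set $G$ contains all of $G$, and minimality then gives the claim. So the assignment $F\mapsto$ (face of $\Pi$) is injective on $\textnormal{Aut}$-orbits, and the number of orbits is finite.

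\emph{Movable cone.} The argument is the same, with chambers in place of faces. Each SQM $\alpha\colon X\dashrightarrow X'$ gives a chamber $\textnormal{A}^e(X'/Y)$ with nonempty interior, since $\alpha^{-1}_*$ of an ample class on $X'$ is movable and big. Pick a rational class $D$ in its interior. There is $\varphi\in\textnormal{PsAut}(X/Y,B)$ with $\varphi^*D\in\Pi'$, and $\varphi^*\textnormal{A}^e(X'/Y)=\textnormal{A}^e(X'\circ\varphi/Y)$ is again a chamber, of the SQM $\alpha\circ\varphi$. It remains to show that only finitely many chambers have interior meeting $\Pi'$. For this I would use that $\Pi'$ is rational polyhedral with finitely many rational generators $D_1,\dots,D_k$. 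Each $D_i$ and each point of $\Pi'$ is big and movable, or lies on the boundary. By Shokurov's polytope argument and finiteness of log minimal models for the boundaries $B+\epsilon\Delta$ with $\Delta$ ranging in the polytope spanned by effective representatives of the $D_i$ (\cite[Theorem E]{BCHM10}, applicable since $K_X+B\equiv_Y 0$ and $\Delta$ is big over $Y$ after perturbing into the interior), only finitely many SQMs are models on which some element of $\Pi'$ becomes nef. Interior points of $\Pi'$ suffice, because chambers have nonempty interior.

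The main obstacle is this last finiteness step when $\Pi'$ touches non-big boundary classes. I would try to handle it by shrinking to the interior: a chamber meeting $\Pi'$ in a set with nonempty interior meets the interior of $\Pi'$, which is covered by finitely many polytopes of big classes. A second delicate point is the equality $\textnormal{A}^e(X'/Y)=\alpha^{-1}_*\Nef^e(X'/Y)$ together with the assumed decomposition of $\Mov^e$ into chambers, which relies on termination of flips. I would take this decomposition as given, as in the discussion preceding \cref{conj: KMT-Geometric}.
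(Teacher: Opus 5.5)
\textbf{Gap in the movable-cone bullet.} Your treatment of the nef-cone bullet is fine, and more explicit than the paper, which only addresses SQMs. The gap is in the movable-cone bullet, exactly at the step you flag as the main obstacle, and your proposed fix does not work.

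\cite[Theorem E]{BCHM10} gives finiteness of weak log canonical models only over a \emph{compact} rational polytope of boundaries with a uniform ample part. Suppose $\Pi'$ contains a non-big class $D_0$. A finite union of compact polytopes of big classes is then projectively bounded away from $D_0$, while $\textnormal{Int}(\Pi')$ accumulates at $D_0$. So $\textnormal{Int}(\Pi')$ is \emph{not} covered by finitely many such polytopes. Nothing in your argument then prevents infinitely many chambers from accumulating toward the ray of $D_0$ inside $\Pi'$.

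You also cannot avoid this case. $\textnormal{PsAut}(X/Y,B)$ preserves bigness and $\textnormal{PsAut}(X/Y,B)\cdot\Pi'=\overline{\Mov}^e(X/Y)$. Hence $\Pi'$ contains non-big classes as soon as $\overline{\Mov}^e(X/Y)$ does, for example pullbacks of ample classes under fibrations (K3 surfaces or Calabi--Yau threefolds with infinitely many elliptic fibrations). Choosing a different point $D$ in the chamber does not help either. A translated chamber $\varphi^*\textnormal{A}^e(X'/Y)$ may lie entirely inside an arbitrarily small neighbourhood of such a ray.

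\textbf{What is missing.} You need a finiteness statement over the whole compact polytope spanned by the effective generators $D_1,\dots,D_r$, non-big ones included. This is where the effectivity built into $\overline{\Mov}^e$ genuinely enters.

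The paper obtains this from Shokurov's log geography. \cite[Theorem 3.4]{CS11} decomposes the set of boundaries in the unit cube $\mathfrak{B}_S$ (on the prime components of $\sum D_i$) that admit wlc models into finitely many wlc classes, each a rational polyhedron. This rests on the appendix's standing abundance and termination assumptions and does not follow from the big case. Only after this does the paper restrict to the finitely many classes consisting of big divisors. It then applies \cite[Theorem 4.2]{KKL16} to the cones over their closures, producing finitely many contractions $X\dashrightarrow X_j$ such that every big class in $\Pi'$ has one of the $X_j$ as its ample model.

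Alternatively, you could supply an argument controlling a neighbourhood of each non-big ray of $\Pi'$ and conclude by compactness of the projectivized $\Pi'$. One route would be a good minimal model of $(X,B+\epsilon D_0)$ together with relative finiteness over its Iitaka base. Either way, some input beyond the big-boundary finiteness of BCHM is required.
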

	\begin{proof}
		Essentially, we only need to prove that $(X, B)$ has only finitely many SQMs up to pseudo-automorphism. The argument is due to \cite[Theorem 2.14]{CL14}, which is based on Shokurov's log geography. Without further notice, we follow the notation in \cite{CS11}. By the hypothesis, there is a rational polyhedral fundamental domain $\Pi^\prime\subseteq \overline{\textnormal{Mov}}^e(X/Y)$ for the natural action $\text{PsAut}(X/Y, B)$. Let $D_1, D_2, \dots, D_r$ be effective divisors whose classes generate $\Pi^\prime$, let $S_1, S_2, \dots, S_k$ be all prime divisors in the support of $D_1 + D_2 + \dots + D_r$. For $S = \sum_{i=1}^k S_i$ define the unit cube $\mathfrak{B}_S = \bigoplus_{i=1}^k [0,1] S_i$. By \cite[Theorem 3.4]{CS11} the set $\mathfrak{N}_S = \{D\in \mathfrak{B}_S: (X/Y, D) \text{ has a wlc model}\}$ is decomposed into a finite number $\sim_\text{wlc}$ classes $\mathfrak{P}$. Furthermore, each class $\mathfrak{P}$ is a convex rational polyhedron. We shall consider only polyhedrons $\mathfrak{P}$ such that any $D\in \mathfrak{P}$ is big. Fix one such polyhedron~$\mathfrak{P}$. Let $C$ be a convex cone generated by $\overline{\mathfrak{P}}$ in the $\R$-vector space $\mathfrak{D}_S = \bigoplus_{i=1}^k\R S_i$. Note $C$ is rational polyhedral. Since $K_X + B\sim_{f, \R} 0$, by \cite[Theorem 4.2]{KKL16}, there are finitely many cones $C_j\subseteq C$ and contractions $f_j: X \dashrightarrow X_j$ such that $C = \bigcup_j \overline{C_j}$ and $f_j$ is an ample model for any $D\in C_j$. Moreover, $\overline{C_j}$ is rational polyhedral, and $f_j$ is a semiample model for any $D\in \overline{C_j}$. Therefore, for every $[D]\in \Pi^\prime \subseteq \overline{\textnormal{Mov}}^e(X/Y)\cap \text{Big}(X/Y)$ there is an index $j$ such that $X_j \cong \text{Proj}_Y(X/Y, D)$.
		This implies the proposition because $\text{PsAut}(X/Y, B)\cdot \Pi^\prime = \overline{\Mov}^e(X/Y)$.
	\end{proof}
	
	\begin{proposition}
		\label{prop: geometric_implies_finiteness}
		Suppose $(X, B)$ is klt. Suppose \cref{conj: KMT-Geometric} holds for all projective $\Q$-factorial trm wlc models $0$-equivalent to $(X, B)$. Then \cref{conj: KM-finiteness} holds for $(X, B)$.
	\end{proposition}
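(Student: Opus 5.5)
The plan is to reduce the finiteness of wlc klt models in the $0$-class of $(X,B)$ to finiteness of $\Q$-factorial terminal models up to isomorphism, and then to invoke the second part of \cref{conj: KMT-Geometric}. Let $(X_\al, B_\al)$ be a projective wlc klt model $0$-equivalent to $(X,B)$. First I take a $\Q$-factorial terminalization $t_\al\colon (X^{\rm trm}_\al, B^{\rm trm}_\al)\to (X_\al,B_\al)$ \cite[Corollary 1.4.3]{BCHM10}. By \cref{prop: flops_0-equivalent}, all such terminalizations lie in one $0$-class of projective $\Q$-factorial trm wlc models. Any two of them are connected by a birational map that is small: both pairs are terminal and crepant birational, so no divisor can be exceptional on one side only, since an exceptional divisor would have discrepancy $>0$ on one model and would have to be a component of the boundary with negative coefficient on the other. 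Thus, after fixing one such model $(X',B')$ (obtained from $(X,B)$ itself), every $X^{\rm trm}_\al$ is an SQM of $X'$ over the base $Y'$ of the abundance fibration. This uses that $K+B\sim_\R 0$ over $Y'$, and that $Y'$ is the same for all models, being the ample model of $K+B$, which is crepant invariant.

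Second, I apply the second part of \cref{conj: KMT-Geometric} to $(X',B')$, which is allowed by hypothesis. It says there are finitely many $\mathrm{PsAut}(X'/Y',B')$-classes of chambers $\textnormal{A}^e(X''/Y')$. If two SQMs $\al_1\colon X'\dashrightarrow X_1$ and $\al_2\colon X'\dashrightarrow X_2$ have chambers related by $\psi\in \mathrm{PsAut}(X'/Y',B')$, then $\al_2\circ\psi\circ\al_1^{-1}\colon X_1\dashrightarrow X_2$ is small and sends an ample class to an ample class. Such a map is an isomorphism, by the standard argument that a small map that is nef-to-ample and crepant extends, e.g. via the negativity lemma or by comparing $\mathrm{Proj}$ of section rings. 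It also preserves the boundaries, since the boundary is the pushforward of $B'$. Hence the pairs $(X^{\rm trm}_\al, B^{\rm trm}_\al)$ fall into finitely many log isomorphism classes.

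Third, I recover the $(X_\al,B_\al)$ from the terminalizations. Each $X_\al$ is obtained from $X^{\rm trm}_\al$ by a birational contraction $t_\al$ over $Y'$ with $K+B$ crepant. Such a contraction corresponds to a face of $\Nef^e(X^{\rm trm}_\al/Y')$ given by birational contractions, taken up to automorphisms of $(X^{\rm trm}_\al,B^{\rm trm}_\al)$ over $Y'$. The first part of \cref{conj: KMT-Geometric}, applied to each of the finitely many $(X^{\rm trm}_\al, B^{\rm trm}_\al)$, gives finitely many $\Aut$-classes of such faces. Contractions in the same $\Aut$-class give log isomorphic targets, so the set of $(X_\al,B_\al)$ is finite up to log isomorphism.

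The main obstacle is the third step. A given $X_\al$ must come from a face of the nef cone that contains a big class, and this face has to be counted inside $\Nef^e$ rather than only the rational hull. One also needs that the contraction is determined by the face, i.e. as $\mathrm{Proj}_{Y'}$ of the ring of a rational class in its relative interior. I would handle this by noting that $t_\al^*A_\al$ is a rational big nef class, which is semiample by basepoint-free theorem on the klt pair, since $K+B\equiv_{Y'}0$. The contraction it defines depends only on the minimal face containing it. An additional subtlety is that the second step needs the identification of the chambers under the hypothesis on termination, which I take as part of the running assumptions of the appendix.
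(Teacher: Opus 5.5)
Your proposal is correct and follows essentially the same route as the paper. You pass to $\Q$-factorial terminalizations, use the second part of the geometric cone conjecture to reduce to finitely many terminal models up to log isomorphism, and then use the first part (finitely many $\Aut$-classes of faces of $\Nef^e$ coming from birational contractions) to bound the targets of the $t_\al$. You also spell out several points the paper leaves implicit: that the maps between terminalizations are small, that $\mathrm{PsAut}$-equivalent chambers give isomorphic SQMs, and that a face determines its contraction.
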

	\begin{proof}
		Let $(X_\al, B_\al)$ be a projective wlc model of $(X, B)$. As we assumed that the Abundance conjecture holds for all projective wlc models of $(X, B)$, there exists a contraction $f_\al: X_\al \to Y$. Let $t_\al: (X^\prime_\al, B^\prime_\al) \to (X_\al, B_\al)$ be a $\Q$-factorial terminalization. To be definite, we may assume there is a single pair $(X^\prime, B^\prime)$ such that $(X^\prime_\al, B^\prime_\al) = (X^\prime, B^\prime)$ because \cref{conj: KMT-Geometric} holds for $(X, B)$, by the hypothesis. We can assume that all contractions $t_\al$ are not isomorphisms. Then the cone $t_\alpha^* \Nef^e(X_\al/Y)$ is a face of  $\Nef^e(X^\prime/Y)$. By the hypothesis, the number of $\Aut(X^\prime/Y, B^\prime)$-equivalence classes of faces of the cone $\Nef^e(X^\prime/Y)$ corresponding to birational contractions is finite. This concludes the proof.
	\end{proof}

	\begin{proposition}
		\label{prop: arithmetic}
		Suppose $(X, B)$ is klt.  Fix $N\in \Zi_{>0}$. Suppose \cref{conj: KMT-Arithmetic} holds for $(X, B)$. Then the number of orbits for the natural action of $\textnormal{Aut}(X, B)$ on the set $$\{[A]\in N^1_{\Zi}(X/Y) : A^{d} = N, \ A \text{ is ample over } Y\}$$ is finite (whenever the defined set is non-empty).
	\end{proposition}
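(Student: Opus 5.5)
The plan follows the argument already sketched in the proof of \cref{cor: cone_and_constructibility}, now in the relative setting over $Y$. By the first part of \cref{conj: KMT-Arithmetic} there is a rational polyhedral cone $\Pi\subseteq \Nef^e(X/Y)$ with $\Aut(X/Y, B)\cdot \Pi = \Nef^e(X/Y)$. Every class $[A]$ that is ample over $Y$ lies in $\Amp(X/Y)\subseteq \Nef^+(X/Y) = \Nef^e(X/Y)$. Hence, after replacing $A$ by a suitable translate $\varphi^*A$ with $\varphi\in\Aut(X/Y,B)\subseteq \Aut(X,B)$, we may assume $[A]\in \Pi$. The action preserves ampleness over $Y$, and it preserves the intersection number $A^d$ because automorphisms preserve intersection numbers; note that for classes in $N^1_\Zi(X/Y)$ we interpret $A^d$ as computed on a representative, as in the statement. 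It therefore suffices to show that
$$\{[A]\in \Pi\cap N^1_\Zi(X/Y): A^d = N,\ A \text{ ample over } Y\}$$
is finite.

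For this finiteness step I would use the structure of $\Pi$. Choose rational generators $v_1,\dots,v_k$ of $\Pi$, which are nef over $Y$. The lattice points $\Pi\cap N^1_\Zi(X/Y)$ form a finitely generated semigroup by Gordan's lemma. Every such point can be written as $\sum_j c_j v_j + w$, where $c_j\in\Zi_{\ge 0}$ and $w$ ranges over the finite set of lattice points in the bounded zonotope $\{\sum t_j v_j: 0\le t_j<1\}$. Expanding $A^d$ multilinearly, every mixed term $v_{j_1}\cdots v_{j_d}$ and every term involving $w$ is a product of nef classes. So each monomial is $\ge 0$, and $A^d$ is a polynomial in the $c_j$ with nonnegative coefficients.

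The main obstacle is showing that the bound $A^d=N$ forces all $c_j$ to be bounded, since some generators $v_j$ might have $v_j^d=0$. To handle this, I would pass to a simplicial subdivision of $\Pi$ into finitely many rational simplicial cones. It then suffices to bound the lattice points in each simplicial cone spanned by linearly independent nef classes $x_1,\dots,x_m$ whose relative interior meets the ample cone. For ample $A=\sum c_jx_j$, I would argue as follows. Set $H=x_1+\dots+x_m$, which can be taken ample, or at least big and nef, after subdividing so that every cell meeting the ample cone has an ample barycenter. Then
$$A^d\ \ge\ c_j\, x_j\cdot A^{d-1}\ \ge\ c_j\, x_j\cdot (\min_i c_i)^{d-1}H^{d-1}$$
cannot be applied directly, because $\min_i c_i$ could be $0$. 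Instead, I would use Khovanskii--Teissier type log-concavity: $A^d\ge c_j^{\,a} \,(\text{mixed number})$, where the mixed number is strictly positive because $A$ is ample and $x_j\neq 0$ is nef and not numerically trivial over $Y$. The nonvanishing $x_j\cdot A^{d-1}>0$ together with integrality of the $c_j$ should give a uniform bound $c_j\le N/\delta$, where $\delta>0$ is the minimum of $x_j\cdot A_0^{d-1}$ over the finitely many lattice classes $A_0$ below a fixed level. An inductive argument on the number of nonzero $c_j$ then makes this uniform.

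Once the $c_j$ are bounded, there are only finitely many lattice points, and each $\Aut(X/Y,B)$-orbit meets this finite set, so the number of orbits is finite. If the positivity argument for degenerate generators proves too delicate, my fallback is to replace that step by \cref{th: finiteness of orbits}-style boundedness: use \cref{prop: bounded_subclass} together with the criterion \cref{th: constructibility_criterion} restricted to classes in $\Pi$.
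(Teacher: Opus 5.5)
Your overall route is the paper's (which follows Sterk). You use the first part of the arithmetic cone conjecture to move every ample class of volume $N$ into the fundamental domain $\Pi$, and then show that $\Pi$ contains only finitely many lattice classes of volume $N$. The gap is in that last step, which you rightly single out as the crux but do not close:
\begin{itemize}
\item The ``Khovanskii--Teissier'' bound $A^d\ge c_j^{\,a}(\text{mixed number})$ is left unspecified.
\item You define $\delta$ as a minimum of $x_j\cdot A_0^{d-1}$ over ``the finitely many lattice classes $A_0$ below a fixed level''. You give no reason why $x_j\cdot A^{d-1}\ge\delta$ for an arbitrary ample $A$ in the cone. That would need monotonicity of $x_j\cdot A^{d-1}$ in $A$ and ampleness of the comparison classes $A_0$, and neither is addressed.
\item The ``inductive argument on the number of nonzero $c_j$'' is not given.
\end{itemize}
As written, the finiteness is asserted rather than proved.

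The missing point is one line, and it uses the inequality you already wrote down. Take the generators $v_1,\dots,v_k$ of $\Pi$ to be nonzero integral classes. For an integral ample $A\in\Pi$, write $A=\sum_j t_jv_j$ with real $t_j\ge 0$. Then
\[
N=A^d=\sum_j t_j\,(v_j\cdot A^{d-1})\ \ge\ t_j\,(v_j\cdot A^{d-1}) \quad\text{for each } j .
\]
Here $v_j\cdot A^{d-1}$ is an intersection number of Cartier divisors, hence an integer. It is positive because $v_j$ is nef and numerically nontrivial and $A$ is ample. So $t_j\le N$, and $A$ lies in the bounded set $\{\sum_j t_jv_j: 0\le t_j\le N\}$, which contains only finitely many lattice points. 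The integrality that does the work is that of $v_j\cdot A^{d-1}$, not that of the coordinates. With this, Gordan's lemma, the simplicial subdivision, the worry about $v_j^d=0$, and your fallback all become unnecessary; the fallback would in any case only cover $\Q$-boundaries with $K_X+B\equiv0$.

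Your worry about degenerate generators is legitimate. The paper's own sketch rests on the claim that $x_1\cdots x_d>0$ for linearly independent $x_i\in\Pi$. That claim fails in general, e.g.\ for nef classes pulled back along a fibration onto a lower-dimensional variety. The bound above is what actually closes the step.

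Finally, as in the paper, all of these positivity statements need $A$ ample and $v_j$ nef on $X$ itself, so the argument really treats the case where $Y$ is a point. For nontrivial $Y$, the number $A^d$ is not well defined on $N^1_\Zi(X/Y)$, and nefness over $Y$ does not make the monomials in your expansion nonnegative.
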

	\begin{proof}
		The argument is based on \cite[Proposition 2.6]{Ste85}. By the hypothesis, there is a rational polyhedral cone $\Pi\subseteq \textnormal{Nef}^e(X/Y)$ for $\text{Aut}(X, B)$. Let $x_1, x_2, \dots, x_d\in \Pi$ be linearly independent elements, then $x_{i_1}\cdot x_{i_2}\cdot \dots \cdot x_{i_d}\ge 0$ for every tuple $1\le i_1 \le i_2 \le \dots \le i_d\le d$, and $x_{i_1}\cdot x_{i_2}\cdot \dots \cdot x_{i_d}> 0$ if all indices are different. Since $\Pi\cap N^1_\Zi(X/Y)$ is a finitely generated semigroup, it follows that the set $\{[D]\in \Pi\cap N^1_\Zi(X/Y): D^d = N>0\}$ is finite. This implies the statement.
	\end{proof}

\end{document}